\newtheoremstyle{exercise}
{}                
{}                
{\color{blue!70!black}}        
{}                
{\color{blue!70!black} \bfseries}       
{.}               
{ }               
{}                
\newtheoremstyle{example}
{}                
{}                
{\color{olive!70!black}}        
{}                
{\color{olive!70!black} \itshape}       
{.}               
{ }               
{}                
\newcounter{dummy} \numberwithin{dummy}{section}
 \numberwithin{dummy2}{section}
 \numberwithin{dummy3}{subsection}
\newtheorem{theorem}[dummy]{Theorem}
\newtheorem{lemma}[dummy]{Lemma}
\newtheorem{definition}[dummy]{Definition}
\newtheorem{proposition}[dummy]{Proposition}
\theoremstyle{example}
\newtheorem{example}[dummy]{Example}
\theoremstyle{remark}
\newtheorem{remark}[dummy]{Remark}
\definecolor{bleudefrance}{rgb}{0.19, 0.55, 0.91}
\colorlet{PaperBlue}{bleudefrance!70!black}
\titleformat*{\section}{\Large \bfseries \sffamily \color{PaperBlue}}
\titleformat*{\subsection}{\large \bfseries \sffamily}
\titleformat*{\subsubsection}{\bfseries \sffamily}
\titleformat*{\paragraph}{\bfseries}
\titleformat*{\subparagraph}{ \sffamily}
\numberwithin{equation}{section}
\newcommand{\calV}{\mathcal{V}}
\newcommand{\calH}{\mathcal{H}}
\newcommand{\scrF}{\mathscr{F}}
\newcommand{\calL}{\mathcal{L}}
\DeclareMathOperator{\GL}{GL}
\DeclareMathOperator{\Sp}{Sp}
\DeclareMathOperator{\SU}{SU}
\DeclareMathOperator{\Ort}{O}
\DeclareMathOperator{\Ad}{Ad}
\DeclareMathOperator{\dil}{dil}
\DeclareMathOperator{\Dil}{Dil}
\DeclareMathOperator{\symb}{\mathbf{symb}}
\DeclareMathOperator{\Symb}{\mathbf{Symb}}
\DeclareMathOperator{\free}{free}
\DeclareMathOperator{\so}{\mathfrak{so}}
\DeclareMathOperator{\se}{\mathfrak{se}}
\DeclareMathOperator{\su}{\mathfrak{su}}
\DeclareMathOperator{\E}{E}
\DeclareMathOperator{\End}{End}
\DeclareMathOperator{\Isom}{Isom}
\DeclareMathOperator{\isom}{\mathfrak{isom}}
\DeclareMathOperator{\Lbra}{[\! [}
\DeclareMathOperator{\Rbra}{]\! ]}
\DeclareMathOperator{\rank}{rank}
\DeclareMathOperator{\id}{id}
\DeclareMathOperator{\pr}{pr}
\DeclareMathOperator{\Ann}{Ann}
\DeclareMathOperator{\gl}{\mathfrak{gl}}
\DeclareMathOperator{\spn}{span}
\newcommand{\ve}{\varepsilon}
\newcommand{\Eucl}{\mathrm{Eucl}}
\newcommand{\Addresses}{{
{\it Erlend~Grong}, \textsc{University of Bergen, Department of Mathematics, P.O. Box 7803, 5020 Bergen, Norway}\par\nopagebreak
  \textit{E-mail:} \texttt{erlend.grong@uib.no}
}}
\providecommand{\keywords}[1]{\small \textit{Keywords---} #1}
\providecommand{\MSC}[1]{\small \textit{Mathematics Subject Classification (2020)---} #1}
\title{\color{PaperBlue} \huge \sf Curvature and the equivalence problem in sub-Riemannian geometry}
\author{\sf Erlend Grong\footnote{Author is supported by the grant GeoProCo from the Trond Mohn Foundation - Grant TMS2021STG02 (GeoProCo)}}
\date{\sf Srn\'i, January 15-22, 2022}
\begin{document}
\maketitle

\begin{abstract} These notes give an introduction to the equivalence problem of sub-Riemannian manifolds. We first introduce preliminaries in terms of connections, frame bundles and sub-Riemannian geometry. Then we arrive to the main aim of these notes, which is to give the description of the canonical grading and connection existing on sub-Riemann manifolds with constant symbol. These structures are exactly what is needed in order to determine if two manifolds are isometric. We give three concrete examples, which are Engel (2,3,4)-manifolds, contact manifolds and Cartan (2,3,5)-manifolds.

These notes are an edited version of a lecture series given at the \href{https://conference.math.muni.cz/srni/}{42nd Winter school: Geometry and Physics}, Snr\'i, Check Republic, mostly based on \cite{Gro20b} and other earlier work. However, the work on Engel (2,3,4)-manifolds is original research, and illustrate the important special case were our model has the minimal set of isometries.
\end{abstract}

\keywords{Sub-Riemannian geometry, equivalence problem, frame bundle, Cartan connection, flatness theorem}

\MSC{53C17,58A15}

\tableofcontents


\section{Introduction} 


{\it When are two spaces different?} Let us start with a simple object. The two-dimensional vector space $\mathbb{R}^2$ with its usual Euclidean metric. We give it the standard coordinates $(x,y)$ with corresponding vector field $\partial_x$, $\partial_y$. The Euclidean metric $g_{\Eucl}$ is given by
$$\langle \partial_x, \partial_x \rangle_{g_{\Eucl}} =1, \qquad \langle \partial_y, \partial_y \rangle_{g_{\Eucl}} =1, \qquad \langle \partial_x, \partial_y \rangle_{g_{\Eucl}} =0,$$
which can be written as either
$$g_{\Eucl} = \begin{pmatrix} 1 & 0 \\ 0 & 1 \end{pmatrix} \qquad \text{or} \qquad g_{\Eucl} = dx^2 + dy^2.$$
A general Riemannian metric $g = \langle \cdot  , \cdot \rangle_g$ on $\mathbb{R}^2$ is an inner product of tangent vectors that varies from point to point. It can be written as
$$g = \begin{pmatrix} g_{11} & g_{12} \\ g_{12} & g_{22} \end{pmatrix} \qquad \text{or} \qquad g = g_{11} dx^2 + 2 g_{21} dx dy+ g_{22}dy^2,$$
where $g_{ij}$ are smooth function with $g_{11} g_{22} - g_{12}^2 >0$ and $g_{11}>0$. This inner product can be considered as describing a shape as follows.

\begin{example} \label{ex:Metrics}
Let us consider the following subsets of $\mathbb{R}^3$ with the standard Euclidean metric.
\begin{multicols}{2}[]
\begin{enumerate}[\rm (i)]
\item Consider the subset
$$M= \left\{ (x, y, y^2) \, : \, (x,y) \in \mathbb{R}^2 \right\}.$$
We can view space this as $\mathbb{R}^2$ with the inner product
$$g = dx^2 + (1+ 4y^2) dy^2.$$
\end{enumerate}
\vfill\null
\columnbreak
\begin{center}
\includegraphics[height=4cm]{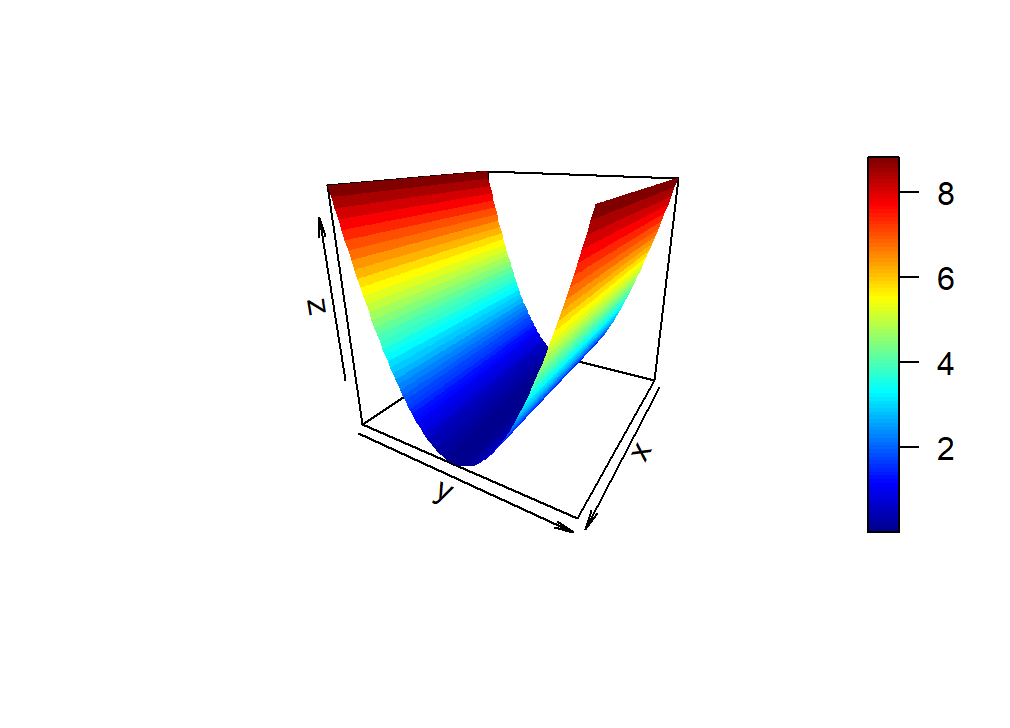}
\end{center}
\end{multicols}
\vspace{-1.2cm}

\begin{multicols}{2}[]
\begin{enumerate}[\rm (ii)]
\item Consider the subset
$$M= \left\{ (x, y, xy) \, : \, (x,y) \in \mathbb{R}^2 \right\}.$$
We can view this space as $\mathbb{R}^2$ with the inner product
$$g = (1+y^2) dx^2 + (1+x^2)dy^2 + 2 xy dx dy.$$
\end{enumerate}
\vfill\null
\columnbreak

\begin{center}
\includegraphics[height=4cm]{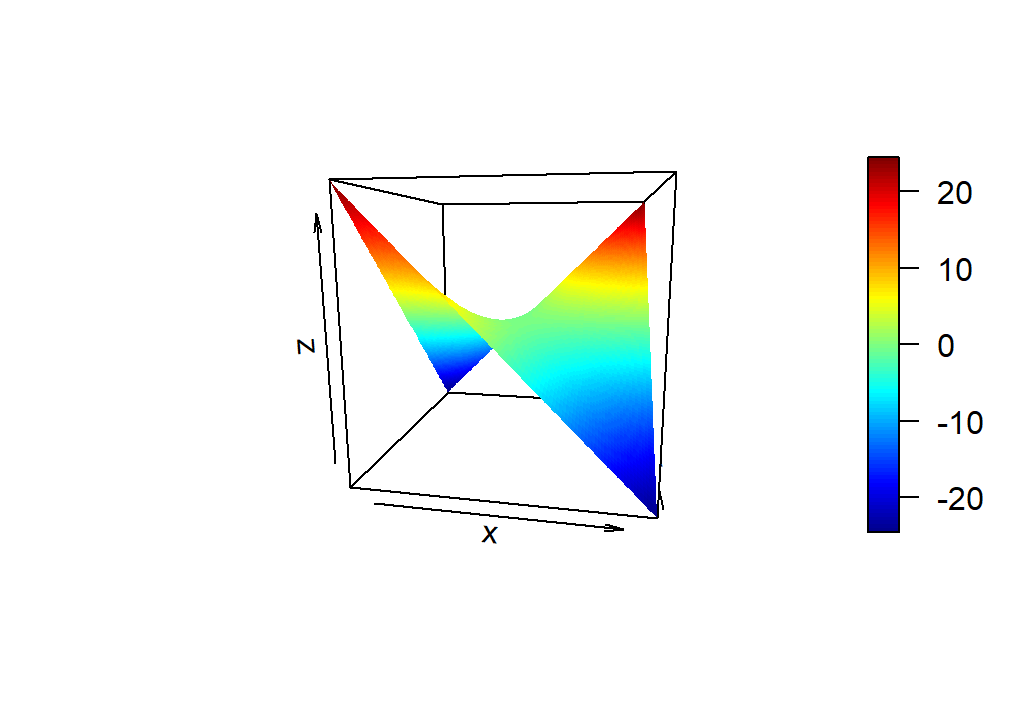}
\end{center}
\end{multicols}
\vspace{-1.2cm}

\begin{multicols}{2}[]
\begin{enumerate}[\rm (iii)]
\item We finally consider a punctured sphere $M = S^2\setminus \{(0,0,1)\}$.
We can see this as $\mathbb{R}^2$ using stereographic projection. Let $(p^0, p^1, p^2)$ be the coordinates of $\mathbb{R}^3$, and define
$$x = \frac{p^1}{1-p^0}, \qquad y = \frac{p^2}{1-p^0}.$$
The corresponding Riemannian metric is given by
$$g= \frac{4}{(1+x^2 +y^2)^2} (dx^2 + dy^2).$$
\end{enumerate}
\vfill\null
\columnbreak

\begin{center}
\includegraphics[height=5cm]{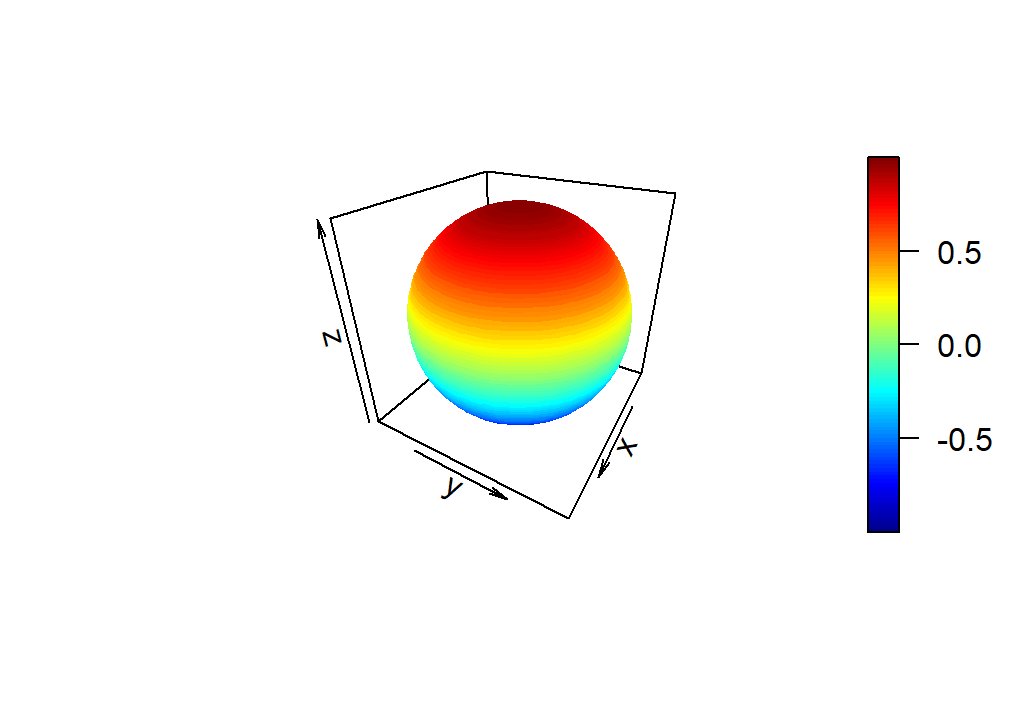}
\end{center}
\end{multicols}

\end{example}

The main question which this note will focus on, \emph{the equivalence problem}, is the following: How do we know if two spaces are the same? Roughly speaking, ``the same'' in the setting of Riemannian manifolds means that we can preform a change of variables such that one Riemannian metric transforms into the other, which will mean that all of the distances are preserved.

Let us look at the Example~\ref{ex:Metrics} and ask the simple question: Are any of these examples just a change of variables away from being $\mathbb{R}^2$ with the Euclidean metric? The answer is that this happens only in (i), with coordinate change $(u,v) = (x, \int_0^y \sqrt{1+t^2} dt)$. But how can we prove that it is impossible for the other examples, given that there are infinitely many coordinate changes? Or look at the following example:
$$g = (1+x^2)dx^2 + 2 \frac{x+y}{1+y^2} dx dy + \frac{1}{1+y^2}dy^2.$$
It is very difficult to find the change of variable to obtain the standard Euclidean space\footnote{The change of variable is $(u,v)= (x+\frac{1}{2} \log(1+y^2), \tan^{-1} y + \frac{1}{2} x^2)$} . Proving directly that a Riemannian metric cannot be rewritten as a flat metric is even more difficult. The answer is found in the invariant called Gaussian curvature and we can make a change of variable to get the Euclidean metric if and only if this invariant vanishes. That the Gaussian curvature is an invariant independent of choice of coordinates was observed by Carl Friedrich Gauss' Theorema Egregium in 1827. It was further generalized by his student Bernhard Riemann to more dimensions and shapes. The result in the end is the flatness theorem.
\begin{theorem}
A Riemannian manifold is locally isometric to the Euclidean space $\mathbb{R}^n$ if and only the curvature tensor of the Levi-Civita connection vanishes.
\end{theorem}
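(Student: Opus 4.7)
The plan is to prove both directions. The \emph{only if} direction is essentially by naturality: if $\varphi\colon U\to V\subset\mathbb{R}^n$ is an isometry, then $\varphi$ pulls back the Levi-Civita connection of the Euclidean metric to that of $g$ on $U$, and hence pulls back the Euclidean curvature tensor (which is zero) to the curvature tensor of $g$. So it suffices to treat the substantive \emph{if} direction: assuming the Riemann curvature $R$ of the Levi-Civita connection $\nabla$ vanishes identically, I will build, around any point $p\in M$, a coordinate chart in which $g=\sum_i (dx^i)^2$.

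First I would fix $p\in M$ and choose an orthonormal basis $e_1,\dots,e_n$ of $T_pM$. The plan is to extend this basis to a local orthonormal frame $E_1,\dots,E_n$ of parallel vector fields on a neighborhood $U$ of $p$. Since the parallel transport of a single vector along a loop is governed by the holonomy, vanishing of $R$ together with the Ambrose--Singer theorem (or equivalently, the classical argument that the curvature is the infinitesimal generator of holonomy around small loops) implies that parallel transport in a simply connected neighborhood $U$ of $p$ is path-independent. Defining $E_i(q)$ as the parallel transport of $e_i$ along any path from $p$ to $q$ in $U$ then yields a well-defined smooth frame with $\nabla E_i=0$ for all $i$. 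Metric compatibility of $\nabla$ guarantees that $\langle E_i,E_j\rangle_g$ is constant along every curve, hence equal to $\delta_{ij}$ everywhere on $U$.

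Next I would exploit the torsion-freeness of the Levi-Civita connection. Since each $E_i$ is parallel,
$$[E_i,E_j]=\nabla_{E_i}E_j-\nabla_{E_j}E_i=0\qquad\text{for all }i,j.$$
Thus $E_1,\dots,E_n$ are $n$ pointwise linearly independent, pairwise commuting vector fields on $U$. By the simultaneous flow-box theorem (a direct consequence of the Frobenius theorem applied to commuting fields), there exist local coordinates $(x^1,\dots,x^n)$ on a possibly smaller neighborhood of $p$ with $E_i=\partial_{x^i}$. In these coordinates
$$g_{ij}=\langle\partial_{x^i},\partial_{x^j}\rangle_g=\langle E_i,E_j\rangle_g=\delta_{ij},$$
so the coordinate map is the desired local isometry onto an open subset of $(\mathbb{R}^n,g_{\Eucl})$.

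I expect the main obstacle to be the rigorous justification of path-independence of parallel transport from $R\equiv 0$. The cleanest way is to work on a simply connected neighborhood $U$ (for instance, a geodesically star-shaped normal neighborhood of $p$): given two paths from $p$ to $q$, interpolate between them by a smooth homotopy and show that the endpoint of parallel transport is constant along the homotopy parameter, with the derivative expressible as an integral involving $R$ on variation rectangles and therefore zero. Alternatively, one can phrase the same argument on the orthonormal frame bundle, where vanishing curvature makes the horizontal distribution integrable by Frobenius, and the maximal horizontal leaf through a chosen frame at $p$ projects diffeomorphically onto $U$, producing the parallel frame directly. Either way, this step is where the vanishing-curvature hypothesis is actually used, and the rest of the argument is formal manipulation of compatible and torsion-free connections.
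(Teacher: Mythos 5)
Your proof is correct, but it takes a genuinely different route from the one in the paper. The paper proves the flatness theorem on the orthonormal frame bundle $\Ort(M)$ by assembling the connection form and the tautological form into a single $\se(n)$-valued one-form $\psi=(\omega,\theta)$, observing that vanishing curvature and torsion make $\psi$ satisfy the Maurer--Cartan structure equation $d\psi+\frac{1}{2}[\psi,\psi]=0$, and then invoking the integration theorem for Lie-algebra-valued forms to produce a local $\Ort(n)$-equivariant map $f:V\to\E(n)$ that descends to a local isometry. You instead split the two normalizations: $R=0$ gives local path-independence of parallel transport (equivalently, involutivity of the horizontal distribution, which is visible in the paper's bracket relation $[H_p,H_q]=-\xi_{\bar R(p,q)}-H_{\bar T(p,q)}$), hence a parallel orthonormal frame $E_1,\dots,E_n$; torsion-freeness then forces $[E_i,E_j]=0$, and the simultaneous flow-box theorem produces coordinates in which $g=\sum_i(dx^i)^2$. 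Both arguments are complete modulo a standard integrability input (Frobenius for you, the Maurer--Cartan integration theorem for the paper). What each buys: your argument is more elementary and self-contained, but it is tied to the Euclidean model, since the final step identifies flat coordinates directly; the paper's Cartan-connection formulation is chosen deliberately because it generalizes verbatim to other homogeneous models and is exactly the template reused for the sub-Riemannian flatness theorems later in the text, where no analogue of a commuting parallel frame is available.
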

For the definition of the Levi-Civita connection, see Section~\ref{sec:Levi-Civita}. ``Locally isometric'' means that we can locally do a change of variable transforming the manifold to the Euclidean space with the standard Euclidean metric. We can make similar theorems for the sphere, hyperbolic space and so forward.

Our dealings with Riemannian manifolds will mainly serve to build intuition for the main topic: the equivalence problem of \emph{sub-Riemannian manifolds}. Sub-Riemannian geometry is a much younger subject\footnote{the term sub-Riemannian was first introduced in 1986 \cite{Str86}}, with a much less established structure. For a sub-Riemannian manifold, we have an inner product, but only in some directions, since the metric is only defined on a subbundle. Also in this setting, we can ask: \emph{when are two sub-Riemannian manifolds the same?} Consider two sub-Riemannian structures on $\mathbb{R}^3$, given by the following orthonormal bases
$$X_1 = \partial_x - \frac{1}{2} y \partial_z, \qquad X_2 = \partial_y + \frac{1}{2} x \partial_z,$$
for the first structure and
$$Y_1 = \left(\frac{\pi}{2} + \tan^{-1} z \right) \partial_x , \qquad Y_2= \frac{x}{(1+z^2)(\frac{\pi}{2} + \tan^{-1} z)} \partial_x + \frac{\frac{\pi}{2} + \tan^{-1} z}{x} \partial_y + \frac{1}{2} \frac{x}{\frac{\pi}{2} + \tan^{-1} z} \partial_z,$$
for the second structure. How would you know that these are just a change of variable from each other\footnote{the change of variable $(x,y,z) \mapsto (u,v,w)$ with $z = w$, $x = \frac{u \cos v}{\frac{\pi}{2} + \tan^{-1} w}$ and $u = \frac{u \sin v}{\frac{\pi}{2} + \tan^{-1} w}$}?
The equivalence problem for such manifolds is much more complicated. Through this notes, we aim to give some insight into challenges and results in this setting.

\paragraph{Outline} In Section~\ref{sec:Connections} we introduce the concept of affine connections and how such connections help us give a canonical basis for the frame bundle. In Section~\ref{sec:Riemannian}, we work with Riemannian metrics and the Levi-Civita connection. We will also prove the flatness theorem in Riemannian geometry, using the theory of Cartan connections. In Section~\ref{sec:subRiemannian} we define sub-Riemannian manifolds. We consider the flat sub-Riemannian spaces called Carnot groups, and define symbols at a point, which is a local nilpotent approximation of a sub-Riemannian manifold. Finally, we introduce sub-Riemannian frame bundle, and we use this construction to give a formula for a canonical choice of connection and grading for sub-Riemannian manifolds with constant symbol. We end with the following three examples. In Section~\ref{sec:Engel}, we do a complete computation of this connection and grading for the case of sub-Riemannian manifolds of growth vector $(2,3,4)$. In Sections~\ref{sec:Contact} and~\ref{sec:235} we give examples of flatness theorems for respectively contact manifolds and manifolds with growth vector (2,3,5), while leaving most of the details to \cite{Gro20b}.

\paragraph{Preliminaries, general references and further reading} These notes assume that the reader is familiar with the basic theory of manifolds, tangent bundles and differentials of maps between manifolds. The reader should also be familiar with Lie theory and principal bundles. Riemannian geometry and connections are introduced in the text, but it can be an advantage to have some pre-knowledge of these topics. We recommend the books \cite{Lee13,Lee97,Sha97} for more information, which can also be used as references for the information found in Section~\ref{sec:Connections} and Section~\ref{sec:Riemannian}. For references and further reading on sub-Riemannian manifolds, which is presented in Section~\ref{sec:subRiemannian}, we recommend books \cite{Mon02,ABB20}.

\paragraph{Formatting choices of the text} The original manuscript of the lecture notes contained several exercises. These have been turned into results (proposition, lemmas, etc) inside the text, reformulating them somewhat, and including the proofs.

\section{Connections} \label{sec:Connections}
\subsection{Manifolds, acceleration and connections}
On a general manifold with no further structure, there is really no good definition of a double derivative.
Consider a smooth curve $\gamma: [a,b] \to M$ into a manifold $M$. We can define the derivative $\dot \gamma:[a,b] \to TM$ as a section of the tangent bundle over $\gamma$. We note that $\dot \gamma(t) =0$ implies that $\gamma$ is constant. However, if we define $\frac{d}{dt} (\dot \gamma) = \ddot \gamma: [a,b] \to T(TM)$, then $\ddot \gamma = 0$ also implies $\gamma$ is constant. Hence, it is not really a good replacement for the second derivative in the Euclidean space. The problem can be considered as follows: If $X: M \to TM$ is a vector field, then $X_* : TM \to T(TM)$ determines change in both the `manifold part' and a `fiber part', but there is no canonical way of separating these. We could pick local coordinates, but this will not be a canonical choice, and it can sometimes be difficult to see which part of our expressions are coordinate dependent and which ones are not.

To get a proper definition of acceleration, we need the following additional structure. An affine connection on $TM$ is a map
$$\nabla: \Gamma(TM) \times \Gamma(TM) \to \Gamma(TM), \qquad (X,Y) \mapsto \nabla_X Y,$$
with the properties
\begin{enumerate}[\rm (I)]
\item Linearly property: the map $(X,Y) \mapsto \nabla_X Y$ is $C^\infty(M)$-linear in $X$ and $\mathbb{R}$-linear in $Y$.
\item Leibnitz property: If $f$ is a smooth function on $M$, then
$$\nabla_X(f Y) = (Xf) Y + f \nabla_X Y.$$
\end{enumerate}
We observe the following important consequences from the definitions of connections.

\begin{enumerate}[\rm (i)]
\item Since $\nabla_XY$ is tensorial in $X$, the vector $\nabla_{X}Y|_x$ only depends on $X|_x$. Hence, it makes sense to write $\nabla_v Y$ where $v$ is just an element in $TM$.
\item With a little more work, we can show that $\nabla_v Y$ only depends on the values of $Y$ along a curve tangent to~$v$. We can hence define covariant derivatives $\nabla_{\dot \gamma} Y(t)$ where $Y(t)$ is a vector field defined just along the differentiable curve $\gamma(t)$. We note that if $f(t)$ is a function depending on $t$, we have the Leibnitz property
$$\nabla_{\dot \gamma} f(t) Y(t) = \dot f(t) Y(t) + f(t) \nabla_{\dot \gamma} Y(t).$$
\item A vector field $Y(t)$ is called parallel along the curve $\gamma(t)$ if
$$\nabla_{\dot \gamma} Y(t) =0.$$
Parallel vector fields are the closest we can get to constant vector fields on a manifold. For any $v \in T_{\gamma(t_0)}M$, there exists a unique parallel vector field $Y(t)$ such that $Y(t_0) = v$. This allows us to define the parallel transport map
$$P_{t_0,t}: T_{\gamma(t_0)}M \to T_{\gamma(t)}M, \qquad P_{t_0,t}:v \mapsto Y(t), \qquad \text{$Y(t)$ parallel with $Y(t_0) =v$},$$
which is a linear isomorphism of vector spaces. In particular, parallel transport sends bases of the tangent space to bases of the tangent space.
\item Finally, we can take $Y(t) = \dot \gamma(t)$ in (ii), to define a 'double derivative' $\nabla_{\dot \gamma} \dot \gamma$. We define $\gamma$ to be \emph{a geodesic} if it satisfies
$$\nabla_{\dot \gamma} \dot \gamma =0.$$
Geodesics are analogues of constant speed curves in the sense that they are determined by an initial value and velocity.
\end{enumerate}

We consider the following tensors associated to failure of commutativity of the covariant derivatives. We first have the torsion tensor
$$T(X, Y) = \nabla_{X} Y- \nabla_{Y} X - [X,Y].$$
This can be interpreted as follows: Let $f:(a_1, b_1) \times (a_2, b_2)\to M$, $(s, t) \mapsto f(s,t)$ be a parametrized surface in $M$. Then
$$\nabla_{\partial_s} \partial_t f -\nabla_{\partial_t} \partial_s f = T(\partial_s f, \partial_t f).$$
Hence, derivatives in $s,t$ commute only if the torsion tensor vanishes. Next, we have the curvature
$$R(X, Y) Z = \nabla_X \nabla_Y Z - \nabla_Y \nabla_X Z - \nabla_{[X,Y]} Z.$$
It plays a similar role to torsion for vector fields, in the sense that if $Z(s,t)$ is a parametrized vector field for the surface $f(s,t)$, then
$$\nabla_{\partial_s f} \nabla_{\partial_t f} Z - \nabla_{\partial_t f} \nabla_{\partial_s f} Z= R(\partial_s f, \partial_t f)Z.$$

We finally note this important property for connections.
\begin{theorem} \label{th:Parallel}
Let $x \in M$ be any point. Then there is a local basis $X_1, \dots, X_n$ of vector fields around $x$ such that for any vector field $Y$, we have $\nabla_Y X_j |_x= 0$, $j=1, \dots, n$.
\end{theorem}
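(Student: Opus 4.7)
The plan is to construct such a frame by parallel transporting a fixed basis of $T_x M$ along the radial geodesics emanating from $x$. Since $\nabla_Y X_j|_x$ depends only on $Y|_x$ by the tensoriality of the connection in its first argument, the target identity $\nabla_Y X_j|_x = 0$ for all vector fields $Y$ is equivalent to the pointwise condition $\nabla_v X_j|_x = 0$ for every $v \in T_x M$. The key geometric idea is that each such $v$ is the initial velocity of some geodesic through $x$, and a parallel field along that geodesic has vanishing covariant derivative in the direction of the velocity, in particular at $t=0$.

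First I would fix an arbitrary basis $e_1, \dots, e_n$ of $T_x M$. Since the geodesic equation $\nabla_{\dot\gamma} \dot\gamma = 0$ is a second order ODE, standard ODE theory yields a neighborhood $U$ of $0 \in T_x M$ on which the exponential map $\exp_x \colon U \to M$, $v \mapsto \gamma_v(1)$ is well defined, where $\gamma_v$ is the geodesic with $\gamma_v(0) = x$ and $\dot\gamma_v(0) = v$. The differential of $\exp_x$ at $0$ is the identity of $T_x M$, so after shrinking $U$ we may assume $\exp_x \colon U \to V$ is a diffeomorphism onto a neighborhood $V$ of $x$.

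Next, for each point $y = \exp_x(v) \in V$ I would define $X_j(y)$ to be the parallel transport of $e_j$ along the radial geodesic $t \mapsto \exp_x(tv)$ from time $0$ to time $1$, using the parallel transport map from item (iii) of the preceding discussion. Smooth dependence of the parallel transport ODE on initial data, together with smoothness of the exponential map, makes each $X_j$ a smooth vector field on $V$; and since $X_j(x) = e_j$ and linear independence is an open condition, the fields $X_1, \dots, X_n$ remain a basis throughout a (possibly smaller) neighborhood of $x$.

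Finally I would verify the target identity. Given any $v \in T_x M$, the curve $\gamma_v(t) = \exp_x(tv)$ is a geodesic along which, by construction, $X_j \circ \gamma_v$ is a parallel field. Hence $\nabla_{\dot\gamma_v(t)} X_j = 0$ for all $t$ in the domain of $\gamma_v$, and evaluating at $t = 0$ gives $\nabla_v X_j|_x = 0$, which is exactly the required pointwise identity. The only step that takes any real work is the smoothness of the constructed frame in the base variable, which follows from the standard smooth dependence of ODE solutions on parameters and initial data; there is no essential obstacle here, and the content of the theorem lies rather in the elegant trick of using every geodesic through $x$ simultaneously to kill $\nabla X_j$ in every direction at the single point.
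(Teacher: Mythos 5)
Your proof is correct and follows essentially the same strategy as the paper: define $X_j$ by parallel transport of a fixed basis of $T_xM$ along radial curves emanating from $x$, so that $X_j$ is parallel along a curve realizing any prescribed velocity $v$ at $x$, forcing $\nabla_v X_j|_x = 0$. The only difference is that you transport along radial geodesics via $\exp_x$, whereas the paper transports along the coordinate rays $t \mapsto ty$ in a convex chart, which achieves the same thing without invoking the geodesic equation or the exponential map (and, as the paper's subsequent remark points out, that version adapts verbatim to connections on arbitrary vector bundles, where geodesics are unavailable).
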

The main consequence of this result is that any information of the connection that is independent on a choice of basis does not depend on covariant derivatives of vector fields.

\begin{proof}
Let $y: U \to \mathbb{R}^n$ be a local chart such that the image of $U$ is a convex open set such that $y(x) = 0$. We will use this map to identify $U$ with $\tilde U = y(U)$ and so it is sufficient to prove the result for $\tilde U \subseteq \mathbb{R}^n$ and $x =0$. Let $(y_1, \dots, y_n)$ be the coordinates and define $\nabla_{\partial_{y_i}} \partial_{y_j} = \sum_{k=1}^n \Gamma_{ij}^k \partial_{y_k}$. For any $y \in \tilde U$, define $\gamma_y(t) = ty$. We then define $X_1|_y$, $\dots$, $X_n|_y$ by parallel transport along $\gamma_y(t)$. In other words, if $Y(y,t) = \sum_{k=1}^n Y_k(y,t) \partial_{y_k}$ is the solution of
$$\frac{\partial}{\partial t} Y_k + \sum_{i_1,i_2=1}^n y_{i_1} Y_{i_2} \Gamma_{i_1i_2}^k(ty) = 0,\qquad Y(y,0) = X_k|_0$$
then $X_j|_y =Y(y,1)$ which is a smooth function in $y$. Finally, by definition each $X_j$ is parallel at any direction at $0$. The result follows.
\end{proof}

\begin{remark} Rather than just restricting ourselves to the tangent bundle and vector fields, we could have defined an affine connection on a general vector bundle $E$, where now in the expression $\nabla_X Y$, $X$ is a section of $TM$ while $Y$ is a section of $E$. All of the above properties holds, except that there is no definition of the torsion tensor or geodesics. Observe from the proof of Theorem~\ref{th:Parallel}, that it can be modified to work on any vector bundle choosing an initial basis of the vector bundle at the initial point. Also, if some property of a basis is preserved under parallel transport (such as being orthonormal, symplectic, etc), then we may assume that the local basis in Theorem~\ref{th:Parallel} has this property.
\end{remark}

\subsection{Frame bundles}
Let $M$ be a general differential manifold of dimension $n$ and let $\mathbb{R}^n$ be the Euclidean space. Even if $M$ happens to be diffeomorphic to $\mathbb{R}^n$, there is a structure that exists on $\mathbb{R}^n$ that we do not have on $M$. Namely, $\mathbb{R}^n$ has a canonical basis of vector fields spanning the tangent space, in this case given by the derivatives corresponding to the standard coordinates. The frame bundle is a smart construction that steals this property from $\mathbb{R}^n$ to $M$, but at the expense that we have to work on a fiber bundle above $M$ instead on $M$ directly. We also need a connection $\nabla$ on $M$ to have the full canonical basis.

Let $e_1, \dots, e_n$ be the standard basis of $\mathbb{R}^n$. \emph{A frame} at $x \in M$ is a choice of basis $u_1$, $\dots$, $u_n$ for $T_x M$. Equivalently, we can consider a frame as an invertible linear mapping $u: \mathbb{R}^n \to T_x M$. The correspondence is given by
\begin{equation} \label{maptoframe} u_j = u(e_j).\end{equation}
We write the set of all such frames as $\GL_x(M)$.

Let $\GL(n)$ be the general linear group of real invertible $n \times n$ matrices. If we have a frame $u: \mathbb{R}^n \to T_x M$, then for any $a \in \GL(n)$, we can define a new frame $u \cdot a$ by $u \cdot a= u \circ a: \mathbb{R}^n \to T_x M$ by precomposition. In other words, if $U = u \cdot a$, then
$$U_j = \sum_{i=1}^n a_{ij} u_i.$$
\begin{lemma}
For any pair of elements $u, U \in \GL_x(M)$, there is a unique element $a \in \GL(n)$ such that $U = u \cdot a$.
\end{lemma}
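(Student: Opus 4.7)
The plan is to exploit the fact that a frame at $x$ is by definition a linear isomorphism $\mathbb{R}^n \to T_x M$, so both $u$ and $U$ are invertible linear maps between the same pair of vector spaces. The natural candidate for the element $a \in \GL(n)$ connecting them is then $a := u^{-1} \circ U$, and essentially everything reduces to checking this formula works and is forced.

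First I would verify existence. Since $u: \mathbb{R}^n \to T_x M$ is a linear isomorphism, its inverse $u^{-1}: T_x M \to \mathbb{R}^n$ is a well-defined linear isomorphism. Composing with $U$ gives a linear endomorphism $a = u^{-1} \circ U: \mathbb{R}^n \to \mathbb{R}^n$. As a composition of isomorphisms it is itself an isomorphism, hence belongs to $\GL(n)$. The defining identity $u \cdot a = u \circ a = u \circ (u^{-1} \circ U) = U$ is then immediate from associativity of composition.

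For uniqueness, suppose $a, a' \in \GL(n)$ both satisfy $u \cdot a = U = u \cdot a'$, i.e.\ $u \circ a = u \circ a'$. Precomposing with $u^{-1}$ on the left gives $a = a'$. This uses only that $u$ is invertible.

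There is no real obstacle here; the statement is essentially the observation that the group $\GL(n)$ acts freely and transitively on $\GL_x(M)$ by precomposition, which is a structural triviality once one unpacks the identification in \eqref{maptoframe}. The only thing worth noting in passing is that the above arguments do not use any choice of basis on $T_x M$, so the element $a$ is intrinsically defined by the pair $(u, U)$.
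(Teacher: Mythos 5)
Your proof is correct and is essentially the paper's argument seen through the ``frame as linear isomorphism'' lens: the paper works instead with the basis picture, extracting the matrix $a=(a_{ij})$ from the unique decomposition $U_j=\sum_i a_{ij}u_i$ and verifying $a\in\GL(n)$ by exhibiting the reciprocal coefficients $(A_{ij})$ as an explicit inverse, whereas you obtain invertibility and uniqueness for free from $a=u^{-1}\circ U$. Both are valid; yours is slightly slicker since it bypasses the explicit check that $(a_{ij})$ and $(A_{ij})$ are mutually inverse.
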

\begin{proof}
Since $u_1, \dots, u_n$ and $U_1,\dots, U_n$ are bases, every vector should have a unique decomposition in these bases. Hence, there must exist unique coefficients $(a_{ij})$ and $(A_{ij})$ such that we can write
$$U_j = \sum_{i=1}^n a_{ij} u_i, \qquad u_j = \sum_{i=1}^n A_{ij} U_i.$$
Hence
$$u_k = \sum_{i,j=1}^n A_{ik}  a_{ji} u_j,$$
showing that $(a_{ij})$ is invertible with inverse $(A_{ij})$. If we write $a = (a_{ij}) \in \GL(n)$, it follows that $U = u \cdot a$, which is unique by construction.
\end{proof}

We use the above action to construct a principal bundle called the frame bundle
$$\GL(n) \to \GL(M) \stackrel{\pi}{\to} M,$$
with the fiber over $x \in M$ being $\GL_x(M)$. On the frame bundle, we have the following structures.
\begin{enumerate}[\rm (i)]
\item \emph{The vertical bundle:} The frame bundle $\GL(M)$ is an $n+n^2$-dimensional manifold, and considering differential $\pi_*$ of $\pi: \GL(M) \to M$, and taking its kernel, we obtain the vertical bundle $\calV = \ker \pi_*$ of rank $n^2$. Such a bundle exists for all fiber bundles.
\item \emph{Canonical vertical vector fields:} If we take a frame $u$ at $\GL_x(M)$, we can rotate it at $x \in M$ without moving on the manifold. Derivatives of such rotations will hence be in $\calV$.
Let $\gl(n)$ be the the set of all real $n\times n$-matrices, the Lie algebra of $\GL(n)$. If $A \in \mathfrak{gl}(n)$, we define
$$\xi_A|_u = \left. \frac{d}{dt} u \cdot e^{At} \right|_{t=0} \in \calV_u.$$
This formula gives us a globally defined vector field $\xi_A$ on $\GL(M)$. The map $A \mapsto \xi_A|_u$ has a trivial kernel and since $\gl(n)$ and $\calV_u$ have the same dimension,
$$\calV_u = \left\{ \xi_A |_u \, : \, A \in \gl(n) \right\}, \qquad u \in \GL(M).$$
Hence, $\calV$ is a trivializable bundle. Such vector fields exists on all principal bundles.
\item \emph{The tautological one-form:} We define an $\mathbb{R}^n$-valued one-form $\theta = (\theta_1, \dots, \theta_n)$ by
$$\theta(w) = u^{-1} \pi_* w, \qquad w \in T_u \GL(M).$$
In other words, we take $w$ living in $T_u \GL(M)$, use $\pi_*$ to send it to $T_{\pi(u)} M$, then $\theta(w)$ is the result of writing $\pi_* w$ in the frame $u$. Observe that $\ker \theta = \calV$.
The form $\theta$ is very special for the frame bundle of the tangent bundle.
\end{enumerate}
Since we have an action of $\GL(n)$ on $\GL(M)$, we can also consider the induced action of $\GL(n)$ on $T\GL(M)$. To define this, write $r_a(u) = u \cdot a$, $u \in \GL(M)$, $a \in \GL(n)$. For any vector $w \in T\GL(M)$, define
$$w \cdot a := (r_a)_* w.$$

\begin{lemma}
For any $A \in \gl(n)$, $a \in \GL(n)$,
$$\xi_A \cdot a = \xi_{\Ad(a^{-1}) A}.$$ 
\end{lemma}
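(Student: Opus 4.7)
The proof is a direct unwinding of the definitions, with one standard identity from Lie theory doing the real work. Fix $u \in \GL(M)$ and $a \in \GL(n)$. The strategy is to compute $(r_a)_* \xi_A|_u$ by evaluating on a defining curve, and to compare it with $\xi_{\Ad(a^{-1})A}$ evaluated at the correct base point $r_a(u) = u \cdot a$.

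First I would note that by definition $\xi_A|_u = \tfrac{d}{dt}\big|_{t=0} u \cdot e^{At}$, so applying $r_a$ (which is smooth) and using the chain rule gives
\[ (\xi_A \cdot a)|_{u \cdot a} = (r_a)_* \xi_A|_u = \frac{d}{dt}\bigg|_{t=0} \bigl( u \cdot e^{At} \bigr) \cdot a. \]
On the other side, the same definition yields
\[ \xi_{\Ad(a^{-1})A}|_{u \cdot a} = \frac{d}{dt}\bigg|_{t=0} (u \cdot a) \cdot e^{\Ad(a^{-1})A\, t}. \]

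The two displayed curves agree for all $t$: since $\GL(n)$ is a matrix group we have $\Ad(a^{-1})A = a^{-1} A a$, and therefore $e^{\Ad(a^{-1})A\, t} = a^{-1} e^{At} a$. Combined with associativity of the right $\GL(n)$-action on $\GL(M)$, one gets
\[ (u \cdot a) \cdot e^{\Ad(a^{-1})A\, t} = u \cdot \bigl( a \cdot a^{-1} e^{At} a \bigr) = \bigl( u \cdot e^{At} \bigr) \cdot a, \]
so the $t$-derivatives at $0$ coincide and the claimed equality follows.

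I do not see any serious obstacle here; the only thing to be careful about is bookkeeping of the base points, namely that both sides are vectors living in $T_{u\cdot a}\GL(M)$, and that the identification $\Ad(a^{-1})A = a^{-1}Aa$ is the reason the push-forward by $r_a$ produces exactly a conjugated infinitesimal generator. Since $u \in \GL(M)$ was arbitrary, the vector field identity $\xi_A \cdot a = \xi_{\Ad(a^{-1})A}$ holds globally on $\GL(M)$.
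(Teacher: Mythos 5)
Your proof is correct and is essentially identical to the paper's: both push forward the defining curve $u\cdot e^{At}$ by $r_a$, reassociate as $(u\cdot a)\cdot(a^{-1}e^{At}a)$, and use $a^{-1}e^{At}a = e^{\Ad(a^{-1})A\,t}$ before differentiating at $t=0$. Your explicit attention to the base point $u\cdot a$ is a nice touch but does not change the argument.
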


\begin{proof}
For any frame $u \in \GL(n)$, we observe
\begin{align*}
& \xi_A |_u \cdot a = \frac{d}{dt} u \cdot \exp(At) \cdot a |_{t=0} = \frac{d}{dt} u \cdot a \cdot (a^{-1} \exp(At) \cdot a) |_{t=0} \\
& = \frac{d}{dt} u \cdot a \cdot \exp(\Ad(a^{-1})At) |_{t=0} = \xi_{\Ad(a^{-1}) A} |_{u \cdot a},
\end{align*}
giving us the desired result.
\end{proof}

\subsection{Connections on frame bundles} \label{sec:ConFrame}
An Ehresmann connection $\calH$ on $\pi: \GL(M) \to M$ is a choice of complement to $\calV$. In other words,
$$T\GL(M) = \calH \oplus \calV.$$
We can interpret this complement at follows. If $u \in \GL_x(M)$, then $\pi_{*,u}: T_u \GL(M) \to T_x M$ has kernel $\calV_u$. If we choose a complement $\calH_u$, then $\pi_{*,u}|_{\calH_u}$ is invertible and we can define an inverse $h_u$
so that $h_u v$ is the unique element in $\calH_u$ satisfying $\pi_* h_u v = v$. An Ehresmann connection is called principal if it is invariant under the action $\GL(n)$. That is
$$\calH_{u} \cdot a = \calH_{u \cdot a}.$$
Equivalently, $h_u v \cdot a = h_{u \cdot a} v$ for any $u \in \GL_x(M)$, $v\in T_x M$, $x \in M$, $a \in \GL(n)$. We can also describe principal connections in the following way.

\begin{lemma} Given a principal connection $\calH$ with corresponding horizontal lifts, define a one-form $\omega: T\GL(n) \to \gl(n)$ such that
$$\omega(h_u v) =0, \qquad \omega(\xi_A) = A.$$
Then $\omega(w \cdot a) = \Ad(a^{-1}) \omega(w)$, $w \in T\GL(M)$, $a \in \GL(n)$.
\end{lemma}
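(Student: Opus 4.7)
The plan is to verify equivariance by decomposing an arbitrary tangent vector into its horizontal and vertical components, transporting each piece under the action of $a \in \GL(n)$, and checking both pieces separately against the defining equations of $\omega$.

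First I would note that $\omega$ is well defined on all of $T\GL(M)$: since $T_u\GL(M) = \calH_u \oplus \calV_u$ and every vertical vector at $u$ is uniquely of the form $\xi_A|_u$ for some $A \in \gl(n)$ (the map $A \mapsto \xi_A|_u$ is a linear isomorphism $\gl(n) \to \calV_u$), any $w \in T_u \GL(M)$ admits a unique decomposition
\[
w = h_u v + \xi_A|_u, \qquad v \in T_{\pi(u)}M, \quad A \in \gl(n),
\]
and the two prescribed values $\omega(h_u v)=0$ and $\omega(\xi_A|_u) = A$ extend by linearity to $\omega(w) = A$.

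Next I would apply $(r_a)_\ast$ term by term to this decomposition, using that $(r_a)_\ast$ is linear:
\[
w \cdot a = h_u v \cdot a + \xi_A|_u \cdot a.
\]
For the horizontal part, the principal property of $\calH$ gives $h_u v \cdot a = h_{u \cdot a} v$, which still lies in $\calH_{u\cdot a}$, so $\omega(h_u v \cdot a) = 0$. For the vertical part, the preceding lemma tells us that $\xi_A|_u \cdot a = \xi_{\Ad(a^{-1})A}|_{u \cdot a}$, so $\omega(\xi_A|_u \cdot a) = \Ad(a^{-1})A$. Adding these,
\[
\omega(w \cdot a) = 0 + \Ad(a^{-1}) A = \Ad(a^{-1}) \omega(w),
\]
which is the claimed identity.

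There is essentially no hard step here; the only thing to be careful about is that the decomposition $w = h_u v + \xi_A|_u$ is both well defined and preserved in shape under $(r_a)_\ast$, which is exactly what the principal property of the connection and the previous lemma guarantee. Everything else is linearity of pushforward and of $\omega$.
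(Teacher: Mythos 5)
Your proof is correct and follows essentially the same route as the paper's: decompose $w = h_u v + \xi_A|_u$, push the horizontal part forward via the principal property $h_u v \cdot a = h_{u\cdot a} v$ and the vertical part via the preceding lemma $\xi_A \cdot a = \xi_{\Ad(a^{-1})A}$, then read off $\omega(w\cdot a) = \Ad(a^{-1})A$. The extra remarks on well-definedness of $\omega$ are a welcome bit of added care but do not change the argument.
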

The form $\omega$ is called \emph{the connection form} of $\calH$.
\begin{proof}
Write an arbitrary vector $w \in T_u \GL(n)$ as $w = h_u v + \xi_{A}|_u$, where $\pi_* w = v$ and $\omega(v) = A$.
Then by the previous result,
$$w \cdot a = h_{u \cdot a} v + \xi|_{\Ad(a^{-1}) A} |_{u \cdot a},$$
and so
$$\omega(w \cdot a) = \Ad(a^{-1})A = \Ad(a^{-1}) \omega(w).$$
Since $w$ was arbitrary, the result follows.
\end{proof}

We can construct a principal connection from an affine connection. Let $U(t)$ be a curve in $\GL(M)$ such that $\pi(U(t)) = \gamma(t)$. Assume that $\dot \gamma(0) = v$, that $U(0)= u$ and that each $U_j(t)$ is parallel along $\gamma(t)$. We can then define
$$h_{u} v = \dot U(0),$$
so the derivative of a parallel frame moving in the direction of $v$. We then define
$$\calH_x = \{h_{u} v \, :\, u \in  \GL_x(M), v \in T_x M\}.$$

\begin{lemma}
$\calH$ is a principal Ehresmann connection.
\end{lemma}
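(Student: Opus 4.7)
My plan is to verify the two parts of the claim in order: first that $\calH$ is a well-defined Ehresmann connection (a smooth complement to $\calV$), and then that it is principal (invariant under the right $\GL(n)$-action). The geometric content sits almost entirely in the second part; the first is essentially bookkeeping.

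For the Ehresmann part, I would first check that $h_u: T_xM \to T_u \GL(M)$ is well defined and linear. Existence and uniqueness of $U(t)$ follow from the remark after Theorem~\ref{th:Parallel}: given $u \in \GL_x(M)$ and a curve $\gamma$ with $\gamma(0)=x$, $\dot\gamma(0)=v$, each $U_j(t)$ is determined by the parallel transport ODE $\nabla_{\dot\gamma} U_j = 0$ with initial condition $U_j(0) = u_j$. One then shows $\dot U(0)$ depends only on $v$, not on the chosen curve $\gamma$, by writing the parallel transport equation in local coordinates and observing that $\dot U(0)$ is expressed purely through $u$, $v$ and the Christoffel symbols at $x$. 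Linearity in $v$ is immediate from linearity of the defining ODE in the initial velocity (at fixed $u$). Since $\pi \circ U = \gamma$, we get $\pi_* h_u v = v$, hence $h_u$ is injective and $\calH_u := h_u(T_xM)$ is an $n$-dimensional subspace meeting $\calV_u = \ker \pi_{*,u}$ only at $0$; a dimension count $n + n^2 = \dim T_u\GL(M)$ then gives $T_u\GL(M) = \calH_u \oplus \calV_u$. Smoothness of $\calH$ as a subbundle follows from smooth dependence of ODE solutions on parameters: in a local trivialization one can realize $(u,v) \mapsto h_u v$ as the time-$0$ derivative of the flow of the parallel transport equation, which is smooth in $(u,v)$.

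For the principal property, the key observation is that parallel transport is compatible with constant linear combinations. Concretely, let $U(t) = (U_1(t),\dots,U_n(t))$ be a parallel lift of $\gamma$ with $U(0)=u$, and fix $a = (a_{ij}) \in \GL(n)$. Define $\widetilde U(t) := U(t) \cdot a$, so $\widetilde U_j(t) = \sum_i a_{ij} U_i(t)$. Applying $\nabla_{\dot\gamma}$ and using $\mathbb{R}$-linearity together with the fact that $a_{ij}$ are constants,
\begin{equation*}
\nabla_{\dot\gamma} \widetilde U_j(t) = \sum_{i=1}^n a_{ij} \, \nabla_{\dot\gamma} U_i(t) = 0,
\end{equation*}
so $\widetilde U$ is a parallel lift of $\gamma$ starting at $u \cdot a$. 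By the chain rule (or by definition of the induced action on $T\GL(M)$), $\dot{\widetilde U}(0) = (r_a)_* \dot U(0) = \dot U(0) \cdot a$. Comparing with the definition of $h$ gives $h_u v \cdot a = h_{u \cdot a} v$, which is exactly the condition $\calH_u \cdot a = \calH_{u \cdot a}$.

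I do not expect any serious obstacle; the only point that requires a moment's care is checking that $h_u v$ depends only on $v$ and not on the extending curve $\gamma$, and this is where one implicitly uses that the parallel transport equation is a first-order ODE along $\gamma$ whose instantaneous derivative at $t=0$ is determined by $\dot\gamma(0)$ alone. Once that is in place, both the Ehresmann and principal conditions drop out, the latter being essentially the linearity of the parallel transport ODE in the frame.
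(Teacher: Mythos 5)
Your proof is correct and the essential step---that $U(t)\cdot a$ is parallel because constant linear combinations of parallel vector fields are parallel, hence $h_u v \cdot a = h_{u\cdot a} v$---is exactly the argument the paper gives. The additional verification that $\calH$ is a well-defined smooth complement to $\calV$ is routine bookkeeping that the paper omits, but it is carried out correctly.
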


\begin{proof}
If $U(t)$ is a parallel frame and since linear combinations of parallel vector fields are still parallel, it follows that $U(t) \cdot a$ is also a parallel frame. If $U(0) = u$, then
$$h_{u} v \cdot a = \frac{d}{dt} (U(t) \cdot a)|_{t=0} = h_{u \cdot a} v.$$
It follows that $\calH$ is a principal Ehresmann connection.
\end{proof}

Now something interesting happens: We get a canonical basis of~$\calH$. For any $j =1, 2, \dots, n$, define
$$H_j |_u = h_u u_j.$$
For any element in $p \in \mathbb{R}^n$, we can define $H_p = \sum_{j=1}^n p_j H_j$.
\begin{lemma}
We have bracket relations for $p,q \in \mathbb{R}^n$, $A, B \in \gl(n)$,
\begin{equation}
\label{frame_brackets} [H_p, H_q] = - \xi_{\bar{R}(p,q)} -H_{\bar{T}(p,q)}, \qquad [\xi_A, H_q] = H_{Aq}, \qquad [\xi_A, \xi_B] = \xi_{[A,B]},\end{equation}
where
$$\bar{T}(p,q)|_u = u^{-1} T(u(p), u(q)), \qquad \bar{R}(p,q)|_u = u^{-1} R(u(p), u(q)) u.$$
\end{lemma}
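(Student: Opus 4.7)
The plan is to translate all three brackets into the language of the tautological form $\theta$ and the connection form $\omega$. Since $\calV = \ker \theta$ and $\calH = \ker \omega$ give $T\GL(M) = \calH \oplus \calV$, any vector field $Z$ on $\GL(M)$ is determined by the pair $(\theta(Z), \omega(Z))$; and on the basis fields these pairs are constant, namely $(\theta(H_p), \omega(H_p)) = (p, 0)$ and $(\theta(\xi_A), \omega(\xi_A)) = (0, A)$. So Cartan's formula
$$\alpha([Z_1, Z_2]) = Z_1 \alpha(Z_2) - Z_2 \alpha(Z_1) - d\alpha(Z_1, Z_2)$$
will reduce each evaluation of $\theta([Z_1, Z_2])$ and $\omega([Z_1, Z_2])$ to $-d\alpha(Z_1, Z_2)$ whenever the first two terms vanish.

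The two easy brackets I would dispatch quickly. For $[\xi_A, \xi_B] = \xi_{[A, B]}$ I would compute directly from the definition, using that the flow of $\xi_A$ is $u \mapsto u \cdot e^{At}$; this is the standard identity for fundamental vector fields of a right action. For $[\xi_A, H_q] = H_{Aq}$ I would first check the equivariance $(r_a)^* H_q = H_{aq}$, which follows from the principal-connection identity $h_{u \cdot a} v = h_u v \cdot a$, and then differentiate at $a = e^{At}$, $t = 0$.

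For $[H_p, H_q]$ I would fix a frame $u_0 \in \GL_{x_0}(M)$ and invoke Theorem~\ref{th:Parallel} to produce a local basis $X_1, \dots, X_n$ near $x_0$, parallel at $x_0$, with $X_j|_{x_0} = u_0(e_j)$. This yields a local section $\varphi: M \to \GL(M)$ satisfying $\varphi(x_0) = u_0$ and, crucially, $\varphi_* v|_{x_0} = h_{u_0} v$ for all $v \in T_{x_0} M$. For the horizontal part, $\varphi^*\theta$ is the constant $\mathbb{R}^n$-valued coframe dual to $X_1, \dots, X_n$, so the first two terms of Cartan's formula for $d(\varphi^*\theta)(X_i, X_j)$ vanish identically; combined with the identity $[X_i, X_j]|_{x_0} = -T(X_i, X_j)|_{x_0}$ (from the torsion formula and parallelism at $x_0$), this yields $d\theta(H_p, H_q)|_{u_0} = \bar T(p, q)|_{u_0}$, hence the horizontal part $-H_{\bar T(p, q)}$ of the bracket.

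The main obstacle is the vertical part, where I need $d\omega(H_p, H_q)|_{u_0} = \bar R(p, q)|_{u_0}$. Unlike the torsion computation, $\varphi^*\omega$ vanishes only at $x_0$ and not on a neighborhood: relative to the basis $X_k$ it is represented by the connection matrix $\Gamma$ of $\nabla$, which vanishes at $x_0$ but has nontrivial first derivatives. My plan is to apply Cartan's formula to $\varphi^*\omega$ and use $\varphi^*\omega|_{x_0} = 0$ to kill the bracket term, reducing the calculation to the matrix $(X_i \Gamma_j - X_j \Gamma_i)|_{x_0}$. This is precisely the coordinate expression of $R(X_i, X_j)|_{x_0}$ in the basis $u_0$, because in the definition $R = \nabla\nabla - \nabla\nabla - \nabla_{[\cdot,\cdot]}$ the last term also drops at $x_0$ for the same reason. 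Reading this off gives $\omega([H_p, H_q])|_{u_0} = -\bar R(p, q)|_{u_0}$, and uniqueness of the $\calH \oplus \calV$-decomposition then delivers the identity.
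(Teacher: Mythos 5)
Your argument is correct, but it follows a genuinely different route from the paper's. The paper never passes through the forms $\theta$ and $\omega$ at this stage: it introduces the scalar test functions $F_{\alpha,r}(u)=\alpha(u(r))$, observes that they separate tangent vectors of $\GL(M)$, computes first and second derivatives of these functions along $H_p$ and $\xi_A$ in terms of covariant derivatives of the one-form $\alpha$, and then reads off all three brackets at once from the Ricci-type identity $\nabla^2_{X,Y}-\nabla^2_{Y,X}=R(X,Y)-\nabla_{T(X,Y)}$ for the Hessian. That computation is global (no choice of section, valid at every $u$ simultaneously) at the cost of manipulating second covariant derivatives. You instead use that $(\theta,\omega)$ is a pointwise linear isomorphism $T_u\GL(M)\to\mathbb{R}^n\oplus\gl(n)$, reduce each bracket to $-d\theta$ and $-d\omega$ via Cartan's formula, and evaluate these by pulling back along the section furnished by the normal frame of Theorem~\ref{th:Parallel}; this is in effect a derivation of the first and second structure equations, and it buys you a computation in which the connection matrix vanishes at the base point, at the price of constructing an adapted section through every frame $u_0$ and of the extra care you correctly flag for $\varphi^*\omega$, which vanishes only to zeroth order at $x_0$ so that its exterior derivative still carries the curvature. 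The two small points you leave implicit --- that $\varphi_*v|_{x_0}=h_{u_0}v$ follows from $\nabla X_j|_{x_0}=0$, and that the identity for $p=e_i$, $q=e_j$ extends to all $p,q$ by bilinearity --- are both routine. Note also that your method essentially re-derives the content of the later Proposition giving equations \eqref{Cartan_eq}, which the paper instead deduces \emph{from} this lemma; so the logical order of the two results is reversed in your treatment, which is harmless but worth being aware of.
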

Before the proof, we introduce \emph{the Hessian} $\nabla^2$ of a connection $\nabla$, from
$$\nabla^{2}_{X,Y} = \nabla_X \nabla_Y - \nabla_{\nabla_X Y}, \qquad X, Y \in \Gamma(TM).$$
Observe that $\nabla_{X,Y}^2$ is tensorial in both $X$ and $Y$. Furthermore,
$$\nabla_{X,Y}^2 - \nabla_{Y,X}^2 = R(X,Y) - \nabla_{T(X,Y)}.$$
Finally observe that if $X,Y, Z$ are vector fields and $\alpha$ is a one-form, then
$$X (\nabla_Y \alpha)(Z) = (\nabla_{X,Y}^2 \alpha)(Z) + (\nabla_{\nabla_X Y}\alpha)(Z) + (\nabla_Y \alpha)(\nabla_X Z).$$
\begin{proof}
We prove the equation \eqref{frame_brackets} in the following steps. For any one-from $\alpha$ and $r \in \mathbb{R}^n$, introduce function $F_{\alpha,r}: \GL(M) \to\mathbb{R}$ by
$$F_{\alpha,r}(u) = \alpha(u(r)).$$
For any $u \in \GL(M)$, and $p \in \mathbb{R}^n$, let $\gamma(t) = \gamma^{u,p}(t)$ be a curve with $\dot \gamma(0) = u(p)$ and let $U(t) = U^{u,p}(t)$ be the result of parallel transport of $u(p)$ along $\gamma(t)$ and we have
\begin{align*}
H_q F_{\alpha,r}(u) & = \frac{d}{dt} \alpha(U^{u,q}(t)(r)) |_{t=0} = (\nabla_{\dot \gamma^{u,q}(0)} \alpha)(u(r)) = (\nabla_{u(q)} \alpha)(u(r)), \\
\xi_A F_{\alpha,r}(u) &= \frac{d}{dt} \alpha( u(\exp(At)r)) |_{t=0} =\alpha( u(Ar)) = F_{\alpha,Ar}(u), \\
H_p H_q F_{\alpha,r}(u) &= \frac{d}{dt} (\nabla_{U^{u,p}(t)(q)} \alpha)(U^{u,p}(t)(r))|_{t=0} = (\nabla^2_{u(p),u(q)} \alpha)(u(r)), \\
\xi_A H_q F_{\alpha,r}(u) & = \frac{d}{dt} (\nabla_{u(\exp(At)q)} \alpha)(u(\exp(At)r)) |_{t=0} = (\nabla_{u(Aq)} \alpha)(u(r)) + (\nabla_{u(q)} \alpha)(u(Ar)), \\
H_p \xi_A F_{\alpha,r}(u) &= H_p F_{\alpha,Ar}(u) = (\nabla_{u(p)} \alpha)(u(Ar)), \\
\xi_B \xi_A F_{\alpha,r}(u) &= \xi_B F_{\alpha,Ar}(u) = F_{\alpha,BAr}(u).
\end{align*}

Form the above equations, we see that if $w \in T\GL(M)$ and if $(F_{\alpha,r})_* w = 0$ for any $\alpha \in \Gamma(T^*M)$ and $r \in \mathbb{R}^n$, then $w = 0$. Hence, we can use the functions $F_{\alpha,r}$ to determine vectors. Furthermore, using the above relations,
\begin{align*}
[H_p, H_p] F_{\alpha, r}(u) & = (\nabla^2_{u(p),u(q)} \alpha)(u(r)) - (\nabla^2_{u(q),u(p)} \alpha)(u(r)) \\
& = (R(u(p),u(q)) - \nabla_{T(u(p),u(q))} \alpha)(u(r)) \\
& = - \alpha(u\bar{R}(p,q)|_ur )- (\nabla_{u \bar{T}(p,q)|_u} \alpha)(u(r)) \\
& = - \xi_{\bar{R}(p,q)|_u} F_{\alpha,r}(u) - H_{\bar{T}(p,q)|_u} F_{\alpha,r}(u),
\end{align*}
\begin{align*}
[\xi_A, H_p] F_{\alpha, r}(u) & = (\nabla_{u(Aq)} \alpha)(u(r)) = H_{Aq} F_{\alpha,r}(u), \\
[\xi_A, \xi_B] F_{\alpha,r} & = F_{\alpha, [A.B]r} = \xi_{[A,B]}F_{\alpha,r}.
\end{align*}
The result follows.
\end{proof}

\section{Riemannian manifolds and curvature} \label{sec:Riemannian}
\subsection{Riemannian manifolds and compatible connections}
A Riemannian manifold $(M,g)$ is a connected manifold $M$ and a smoothly varying inner product
$g = \langle \cdot, \cdot \rangle_g$ on the tangent space. For any smooth curve $\gamma: [a,b] \to M$, define its length by
$$L(\gamma) = \int_a^b \langle \dot \gamma(t), \dot \gamma(t) \rangle_g^{1/2} \, dt.$$
Define the Riemannian distance $d_g(x,y)$ as the infimum of lengths of curves connecting $x$ and $y$. \emph{An isometry} $\varphi$ between two Riemannian manifolds $(M,g)$ and $(\tilde M, \tilde g)$ is a diffeomorphism such that
$$\langle \varphi_* v, \varphi_* w \rangle_{\tilde g} = \langle v,  w \rangle_{g}.$$

Let $\nabla$ be an affine connection on $TM$. We say that $\nabla$ is compatible with the Riemannian metric $g$ if
$$X\langle Y_1, Y_2 \rangle_g = \langle \nabla_X Y_1, Y_2 \rangle_g + \langle Y_1 , \nabla_X Y_2 \rangle_g.$$
We have the following alternative description.
\begin{lemma} The following are equivalent.
\begin{enumerate}[\rm (i)]
\item The connection $\nabla$ is compatible with the Riemannian metric $g$.
\item For any point $x \in M$, any orthonormal frame $u_1, \dots, u_n$ of $T_xM$ and any smooth curve $\gamma:[0,T] \to M$ with $\gamma(0) =x$, if $U_1(t), \dots, U_n(t)$ is the result of parallel transporting the frame along $\gamma$, then this frame is also orthonormal for any $t$.
\end{enumerate}
\end{lemma}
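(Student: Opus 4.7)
The plan is to prove the two implications directly from the definition of compatibility and the properties of parallel transport established in Section 2.

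For (i) $\Rightarrow$ (ii), I would take an orthonormal frame $u_1,\dots,u_n$ at $x = \gamma(0)$ and let $U_1(t),\dots,U_n(t)$ denote its parallel transport along $\gamma$. Using the covariant derivative along a curve and the Leibnitz rule together with compatibility (applied in its form $\frac{d}{dt}\langle Y_1(t), Y_2(t)\rangle_g = \langle \nabla_{\dot\gamma} Y_1, Y_2\rangle_g + \langle Y_1, \nabla_{\dot\gamma} Y_2\rangle_g$ for vector fields along $\gamma$), I would compute
$$\frac{d}{dt}\langle U_i(t), U_j(t)\rangle_g = \langle \nabla_{\dot\gamma}U_i, U_j\rangle_g + \langle U_i, \nabla_{\dot\gamma} U_j\rangle_g = 0,$$
since both $U_i$ and $U_j$ are parallel. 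Hence the inner products $\langle U_i(t),U_j(t)\rangle_g$ are constant in $t$ and equal to $\delta_{ij}$ at $t=0$, proving that the transported frame remains orthonormal.

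For (ii) $\Rightarrow$ (i), I would fix $x\in M$ and vector fields $X,Y_1,Y_2$, choose any curve $\gamma$ with $\dot\gamma(0)=X_x$, and let $U_1(t),\dots,U_n(t)$ be the parallel transport of an orthonormal frame at $x$ along $\gamma$, which by assumption remains orthonormal. Along $\gamma$ I would expand $Y_i(\gamma(t)) = \sum_k f_{ik}(t)U_k(t)$ with smooth coefficients $f_{ik}$. The Leibnitz property of the connection together with $\nabla_{\dot\gamma}U_k = 0$ gives $\nabla_{\dot\gamma}Y_i = \sum_k \dot f_{ik}(t)U_k(t)$, while orthonormality yields $\langle Y_1,Y_2\rangle_g = \sum_k f_{1k}f_{2k}$. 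Differentiating this last identity in $t$ and re-expressing the result in terms of $\nabla_{\dot\gamma}Y_i$ produces
$$\frac{d}{dt}\langle Y_1,Y_2\rangle_g = \langle \nabla_{\dot\gamma}Y_1,Y_2\rangle_g + \langle Y_1,\nabla_{\dot\gamma}Y_2\rangle_g.$$
Evaluating at $t=0$ gives the compatibility identity at $x$, and since $x$ was arbitrary, $\nabla$ is compatible with $g$.

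There is no serious obstacle here; the only thing one must be careful about is formulating compatibility for vector fields along a curve (which follows from the tensoriality in the base argument noted in the bullets of Section~2.1), so that the argument in (i) $\Rightarrow$ (ii) makes sense and so that the chain-rule step in (ii) $\Rightarrow$ (i) really delivers the pointwise identity at $x$. Writing $Y_i$ in a parallel orthonormal frame is the key trick, since it reduces compatibility to the ordinary Leibnitz rule for the coefficient functions $f_{ik}(t)$.
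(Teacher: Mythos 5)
Your proposal is correct and follows essentially the same route as the paper: differentiating $\langle U_i,U_j\rangle_g$ along the curve for (i) $\Rightarrow$ (ii), and expanding $Y_1,Y_2$ in a parallel orthonormal frame to reduce the compatibility identity to the ordinary product rule for the coefficient functions in (ii) $\Rightarrow$ (i). No gaps to report.
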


\begin{proof}
(i) $\Rightarrow$ (ii): We see that if $U_1(t), \dots, U_n(t)$ is a parallel basis along a curve, then
$$\frac{d}{dt} \langle u_i, u_j \rangle_g = \langle \nabla_{\dot \gamma} U_i(t), U_j(t) \rangle_g + \langle U_i(t) , \nabla_{\dot \gamma} U_j(t) \rangle_g =0 .$$
Hence, if the frame was orthonormal at $t=0$, it will remain orthonormal. \\
(ii) $\Rightarrow$ (i): Let $X, Y_1, Y_2$ be any vector field and $x \in M$ any point. Let $\gamma:(-\ve,\ve) \to M$ be a curve such that $\gamma(0) = x$ and such that $\dot \gamma(0) = X|_x$. Let $U_1(t), \dots, U_n(t)$ be a parallel orthonormal frame along $\gamma(t)$, and write
$$Y_j|_{\dot \gamma(t)} = \sum_{i=1}^n Y_j^i(t) U_i(t).$$
Then
$$\nabla_{\dot \gamma(t)} Y_j = \sum_{i=1}^n \dot Y_j^i(t) U_i(t).$$
Hence, we have that
\begin{align*}
X\langle Y_1, Y_2 \rangle_x(x) & = \frac{d}{dt} \langle Y_1|_{\gamma(t)}, Y_2 |_{\gamma(t)} \rangle_g |_{t=0} \stackrel{\text{$u$ orthonormal}}{=}  \frac{d}{dt} \sum_{i=1}^n Y_1^i(t) Y_2^i(t) |_{t=0} \\
& = \sum_{i=1}^n \dot Y_1^i(0) Y_2^i(0) +  \sum_{i=1}^n Y_1^i(0) \dot Y_2^i(0) \\
& = \langle \nabla_{\dot \gamma(0)} Y_1, Y_2 \rangle_g + \langle Y_1, \nabla_{\dot \gamma(0)} Y_2 \rangle_g  = \langle \nabla_X Y_1, Y_2 \rangle_g(x) + \langle Y_1, \nabla_{X} Y_2 \rangle_g(x).
\end{align*}
Since the vector fields and point were arbitrary, the result follows.
\end{proof}

We will use the above result in the following way. For any $x \in M$, we define $\Ort_x(M)$ as the space of linear \underline{isometries} from the Euclidean space $\mathbb{R}^n$ to $T_x M$. It can also be considered as the space of all orthonormal frames of $T_xM$ using the correspondence in \eqref{maptoframe}. We then have a transitive action of the orthogonal group $\Ort(n)$ on $\Ort_x(M)$ by precomposition, $u \cdot a = u \circ a$, $u \in \Ort_x(M)$, $a \in \Ort(n)$.
Recall that $\Ort(n)$ is the group of $n \times n$ real matrices satisfying $a^T a = 1$. It has Lie algebra $\so(n)$ of skew-symmetric matrices satisfying $A^T + A =0$.

We can use this action to build a principal bundle
$$\Ort(n) \to \Ort(M) \stackrel{\pi}{\to} M,$$
called \emph{the orthonormal frame bundle}. We still have the tautological one-form $\theta$ on this bundle with the same definition, and we have the vector fields $\xi_A$ which are only defined for $A \in \so(n)$. Since the compatible connection $\nabla$ preserves orthonormal frames under parallel transport, we can create the principal connection $\calH$ with corresponding connection form $\omega$ and canonical horizontal vector fields $H_p$, $p \in \mathbb{R}^n$ in the same way we did in Section~\ref{sec:ConFrame}.

\subsection{Integrability of Lie algebra valued forms} \label{sec:Integrate}
First a definition for Lie algebra-valued forms. Let $\alpha$ be a real valued $k$-form on $M$ and let $A$ be an element in the Lie algebra. Then $\alpha \otimes A$ is a $k$-form with values in $\mathfrak{g}$, that is
$$(\alpha \otimes A)(v_1, \dots, v_k) = (\alpha(v_1, \dots, v_k)) A.$$
Any $\mathfrak{g}$-valued $k$-form can be written as a sum $\eta = \sum_{j=1}^k \alpha^j \otimes A_j$. 
For such forms $\eta =  \sum_{j=1}^k \alpha^j \otimes A_j$ and $\psi = \sum_{j=1}^k \beta^j \otimes B_j$, we can define
$$[\eta , \psi] = \sum_{i,j=1}^k (\alpha^i \wedge \beta^j) \otimes [A_i, B_j].$$
Observe the following properties.
\begin{enumerate}[\rm (i)]
\item $[\eta , \psi] = (-1)^{deg(\psi)deg(\eta)+1} [\psi , \eta]$,
\item If $\eta$ is a one-form, then $[\eta,\eta](v,w) = 2[\eta(v),\eta(w)]$.
\end{enumerate}

\begin{example}
Let $G$ be any Lie group with Lie algebra $\mathfrak{g}$. The left Maurer-Cartan form $\eta$ on $G$, is a $\frak{g}$-valued one-form given by
$$\eta(v) = a^{-1} \cdot v, \qquad v \in T_a G.$$
We can check that this one-form then satisfies,
\begin{equation} \label{leftMC}
d\eta + \frac{1}{2} [\eta, \eta] =0.
\end{equation}
To see that this result holds, we only need to check that it holds for left invariant vector fields. If $A,B$ are elements of the Lie algebra and $\hat A$ is the vector field defined by left translation of $A$, then
\begin{align*}
d\eta(\hat A, \hat B) & = \hat A\eta(\hat B) - \hat B\eta(\hat A) - \eta([\hat A,\hat B]) =- [A,B] = - \frac{1}{2} [\eta(\hat A), \eta(\hat B)].
\end{align*}
since $\eta(\hat A)$ and $\eta(\hat B)$ are constant.

Next, let $f: M \to G$ be any smooth map and define $\psi = f^* \eta$. Then by the properties of the pull-back, we have that $\psi$ is a $\mathfrak{g}$-valued one-form also satisfies $d\psi+ \frac{1}{2} [\psi, \psi] =0$.
\end{example}
We now have the following important result. Recall that a one-form $\alpha$ can locally be seen as the differential of a function if and only if $d\alpha = 0$. We have a similar result for Lie algebra-valued forms.
\begin{theorem} \cite{Sha97}
Let $G$ be a Lie group with Lie algebra $\mathfrak{g}$ an let $\eta$ be its Maurer-Cartan form. If $\psi$ is a $\mathfrak{g}$-valued one-form on a manifold $M$, then there is locally a function $f$ from $M$ to $G$ such that $\psi = f^* \eta$ if and only if
$$d\psi + \frac{1}{2} [\psi, \psi] = 0.$$
\end{theorem}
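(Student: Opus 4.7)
The plan is to prove this by the standard graph-plus-Frobenius argument. The ``only if'' direction is immediate: if $\psi = f^*\eta$, then pulling back the Maurer--Cartan equation \eqref{leftMC} through $f$ and using that pullback commutes with $d$ and with the bracket of Lie-algebra-valued forms yields $d\psi + \frac{1}{2}[\psi,\psi] = 0$. So the substance is the ``if'' direction.

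For this, I would work on the product $M \times G$ with projections $\pi_M$ and $\pi_G$, and consider the $\mathfrak{g}$-valued one-form
\[
\Omega = \pi_M^* \psi - \pi_G^* \eta.
\]
Let $\calH = \ker \Omega \subset T(M\times G)$. At each point $(x,a)$, the form $\eta$ restricts to a linear isomorphism $T_a G \to \mathfrak{g}$, so the equation $\Omega = 0$ uniquely expresses the $G$-component of a tangent vector in terms of its $M$-component. Hence $\calH$ is a smooth distribution of rank $\dim M$, and the projection $\pi_M|_{\calH}: \calH \to TM$ is a fiberwise isomorphism.

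The key step is to verify that $\calH$ is involutive. For vector fields $X,Y$ tangent to $\calH$, we have $\Omega(X) = \Omega(Y) = 0$, so
\[
d\Omega(X,Y) = X\Omega(Y) - Y\Omega(X) - \Omega([X,Y]) = -\Omega([X,Y]).
\]
On the other hand, using $d\pi_M^*\psi = -\tfrac{1}{2}\pi_M^*[\psi,\psi]$ (by hypothesis) and $d\pi_G^*\eta = -\tfrac{1}{2}\pi_G^*[\eta,\eta]$ (by the Maurer--Cartan equation for $\eta$),
\[
d\Omega(X,Y) = -[\psi(\pi_{M*}X),\psi(\pi_{M*}Y)] + [\eta(\pi_{G*}X),\eta(\pi_{G*}Y)].
\]
But $X \in \calH$ means $\psi(\pi_{M*}X) = \eta(\pi_{G*}X)$, and likewise for $Y$, so the two brackets on the right cancel. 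Therefore $\Omega([X,Y]) = 0$, proving involutivity. I expect this bracket computation to be the only delicate point; note that in general $d\Omega + \tfrac{1}{2}[\Omega,\Omega] \neq 0$ on all of $M\times G$, but the obstruction $-[\pi_M^*\psi,\pi_G^*\eta] + \pi_G^*[\eta,\eta]$ vanishes when evaluated on pairs of vectors in $\calH$, which is all we need.

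By the Frobenius theorem, $\calH$ has a (maximal) integral submanifold $N$ through any chosen point $(x_0, a_0)$, and $\pi_M|_N$ is a local diffeomorphism onto an open neighborhood $U \subset M$ of $x_0$. Let $\sigma: U \to N$ be a local inverse, and define $f = \pi_G \circ \sigma: U \to G$. Then on $N$ we have $\pi_M^*\psi = \pi_G^*\eta$, and pulling back by $\sigma$ gives $\psi|_U = \sigma^*\pi_M^*\psi = \sigma^*\pi_G^*\eta = f^*\eta$, which is the desired local primitive. The initial value $f(x_0) = a_0$ can be chosen freely, which accounts for the expected uniqueness up to left translation in $G$.
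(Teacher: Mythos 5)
The paper does not prove this theorem; it is stated with a citation to \cite{Sha97} and used as a black box (the ``only if'' direction is the content of the example immediately preceding it, exactly as you observe). Your argument is the standard graph-plus-Frobenius proof --- essentially the one in Sharpe --- and it is correct and complete: the form $\Omega = \pi_M^*\psi - \pi_G^*\eta$ has kernel a rank-$(\dim M)$ distribution because $\eta|_a : T_aG \to \mathfrak{g}$ is an isomorphism, the involutivity computation correctly exploits that $d\Omega(X,Y) = -\Omega([X,Y])$ for sections of $\ker\Omega$ together with the two structure equations and the identity $\tfrac12[\psi,\psi](v,w) = [\psi(v),\psi(w)]$, and the leaf through $(x_0,a_0)$ projects diffeomorphically to $M$ precisely because $\pi_{M*}$ restricted to $\calH$ is a fiberwise isomorphism. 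Your closing aside is also accurate: $d\Omega + \tfrac12[\Omega,\Omega]$ equals $-[\pi_M^*\psi,\pi_G^*\eta] + \pi_G^*[\eta,\eta]$, which need not vanish globally but does vanish on pairs of vectors in $\calH$, and only the restriction to $\calH$ is needed. The one cosmetic remark is that Frobenius is usually invoked only for a local integral manifold through the chosen point, which suffices here; appealing to a maximal leaf is harmless but unnecessary for a local statement.
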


Let us now return to the orthonormal frame bundle $\Ort(M)$. Recall the brackets in \eqref{frame_brackets} for the vector fields $H_p$, $\xi_A$, $p \in \mathbb{R}^n$, $A \in \so(n)$. Notice that if $\theta$ is the tautological one-from and $\omega$ is the connection form, we observe the relations
\begin{align*}
\theta(H_p) &= p, & \theta(\xi_A) & = 0, \\
\omega(H_p) & = 0, & \omega(\xi_A) & = A.
\end{align*}
Using these relations, we have the following.
\begin{proposition} 
The equations \eqref{frame_brackets} can be rewritten as
\begin{equation} \label{Cartan_eq}
d\theta + [\omega, \theta] =\Theta, \qquad d\omega + \frac{1}{2} [\omega, \omega] = \Omega,
\end{equation}
where
$$\Theta(\xi_A, \cdot ) = 0, \qquad \Omega(\xi_A, \cdot ) = 0, \qquad \Theta(H_p, H_q) = \bar{T}(p,q), \qquad \Omega(H_p, H_q) = \bar{R}(p,q).$$
\end{proposition}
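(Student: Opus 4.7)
The plan is to verify the two identities pointwise by evaluating both sides on arbitrary pairs of vector fields drawn from the global frame $\{H_p, \xi_A : p \in \mathbb{R}^n,\ A \in \so(n)\}$ of $T\Ort(M)$. Since both sides are $2$-forms, it suffices to treat the three cases $(H_p, H_q)$, $(H_p, \xi_A)$, and $(\xi_A, \xi_B)$. The key observation is that on these distinguished vector fields the forms $\theta$ and $\omega$ take \emph{constant} values, namely $\theta(H_p) = p$, $\theta(\xi_A)=0$, $\omega(H_p)=0$, $\omega(\xi_A)=A$. Consequently, in Cartan's formula
$$d\alpha(X, Y) = X\alpha(Y) - Y\alpha(X) - \alpha([X, Y])$$
the first two terms vanish whenever $\alpha \in \{\theta, \omega\}$ and $X, Y$ come from the frame, so $d\alpha(X, Y) = -\alpha([X, Y])$, and the bracket is explicitly given by \eqref{frame_brackets}.

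For the Lie-algebra-valued quadratic terms I use the immediate consequence $[\eta, \psi](v, w) = [\eta(v), \psi(w)] - [\eta(w), \psi(v)]$ of the definition from Section~\ref{sec:Integrate}. In $[\omega, \theta]$ the bracket is interpreted as the natural $\so(n)$-action on $\mathbb{R}^n$, namely $[A, p] = Ap$, while in $[\omega, \omega]$ property (ii) of Section~\ref{sec:Integrate} gives $\frac{1}{2}[\omega, \omega](v, w) = [\omega(v), \omega(w)]$.

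With these simplifications every case is a one-line check. For the torsion equation: on $(H_p, H_q)$, $\omega$ annihilates both inputs so $[\omega, \theta]$ vanishes and $d\theta(H_p, H_q) = -\theta(-\xi_{\bar{R}(p,q)} - H_{\bar{T}(p,q)}) = \bar{T}(p, q)$; on $(H_p, \xi_A)$ the Cartan term gives $-\theta(-H_{Ap}) = Ap$ while $[\omega, \theta]$ gives $-[A, p] = -Ap$, and they cancel; on $(\xi_A, \xi_B)$ both terms vanish since $\theta(\xi_{[A,B]}) = 0$. For the curvature equation: on $(H_p, H_q)$, $\omega$ kills the quadratic term and $d\omega(H_p, H_q) = -\omega(-\xi_{\bar{R}(p,q)} - H_{\bar{T}(p,q)}) = \bar{R}(p, q)$; on $(H_p, \xi_A)$ both contributions vanish; on $(\xi_A, \xi_B)$ the Cartan term yields $-[A, B]$ while the quadratic term yields $[\omega(\xi_A), \omega(\xi_B)] = [A, B]$, canceling. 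The nine outcomes match the prescribed values of $\Theta$ and $\Omega$ exactly.

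There is no substantive obstacle; the argument is bookkeeping with two mild pitfalls, namely being consistent about the antisymmetrization convention used in the bracket of $\mathfrak{g}$-valued forms, and correctly interpreting the mixed bracket $[\omega, \theta]$ via the $\so(n)$-representation on $\mathbb{R}^n$ (equivalently, working inside the affine algebra $\se(n) = \so(n) \ltimes \mathbb{R}^n$, which is what the structure equations naturally live in).
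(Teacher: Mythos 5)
Your proof is correct and follows exactly the paper's strategy: evaluate on the frame $\{H_p,\xi_A\}$, use the constancy of $\theta$ and $\omega$ on these fields to reduce Cartan's formula to $-\alpha([X,Y])$, and plug in the brackets \eqref{frame_brackets}. The paper writes out only the $(H_p,H_q)$ case for the torsion equation and declares the rest similar, whereas you verify all nine cases (including the cancellations in the mixed and vertical cases via the $\se(n)$ interpretation of $[\omega,\theta]$), which is a complete version of the same argument.
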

\begin{proof}
We only need to show that the above formulas holds for the vector fields $H_p$, $p \in \mathbb{R}^n$ and $\xi_A$, $A \in \so(n)$. Note that $d\theta(H_p, H_q) =- \theta([H_p, H_q])$, since $\theta(H_p)$ is a constant. 
We then see that
\begin{align*}
& d\theta(H_p, H_q) + [\omega, \theta](H_p, H_q) = d\theta(H_p, H_q) = - \theta([H_p, H_q]) \\
& = \theta(\xi_{\bar{R}(p,q)} + H_{\bar{T}(p,q)}) = \bar{T}(p,q) = \Theta(H_q, H_q).
\end{align*}
The other calculations are similar.
\end{proof}

We can rewrite \eqref{Cartan_eq} in $\se(n)$, the Lie algebra of the group of Euclidean transformations $\E(n)$. The group $\E(n)$ can be written as matrices,
$$\begin{pmatrix} a & p \\ 0 & 1
\end{pmatrix}, \qquad a\in \Ort(n), p \in \mathbb{R}^n,$$
but we can equivalently consider this group as the space $\Ort(n) \times \mathbb{R}^n$ with group operation
$$(a, p) \cdot (b, q) = (ab, p+aq), \qquad a,b \in \Ort(n), p,q \in \mathbb{R}^n.$$
The corresponding Lie algebra can be considered as the space $\so(n)\times \mathbb{R}^n$ with brackets,
$$[(A,p), (B,q)] = ([A,B], Aq-Bp), \qquad A, B \in \so(n), p,q \in \mathbb{R}^n.$$ 
We can then write $\psi = (\omega, \theta)$ and the equations \eqref{Cartan_eq} now become
$$d\psi + \frac{1}{2} [ \psi, \psi] = (\Omega, \Theta).$$
We see that the curvature and the torsion is exactly the obstruction to $\psi$ being able to be integrated, at least locally.

\subsection{The Levi-Civita connection and the flatness theorem revisited} \label{sec:Levi-Civita}
For a Riemannian manifold $(M,g)$, \emph{the Levi-Civita connection} is the unique connection that is compatible with~$g$ and with torsion $T = 0$.

We mentioned in the introduction that we can determine if a manifold is flat using the Levi-Civita connection. We will do a proof of the flatness theorem in Riemannian geometry using frame bundles.
 \begin{theorem}[Flatness theorem]
 A Riemannian manifold $(M,g)$ is locally isometric to the Euclidean space if and only if the curvature of the Levi-Civita connection vanishes.
\end{theorem}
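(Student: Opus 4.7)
The plan is to exploit the frame-bundle reformulation of compatibility, torsion and curvature as Maurer-Cartan-type equations, and then apply Sharpe's integrability theorem.

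For the easy direction, I would first note that an isometry pulls back the Levi-Civita connection of the target to the Levi-Civita connection of the source (by uniqueness of the torsion-free, metric connection). Since the flat connection $d$ on $\mathbb{R}^n$ is the Levi-Civita connection of the Euclidean metric and it has zero curvature, and curvature is a $(1,3)$-tensor preserved by diffeomorphisms that intertwine the two connections, a manifold locally isometric to Euclidean space must have $R\equiv 0$.

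For the hard direction, assume $R = 0$ and pass to the orthonormal frame bundle $\Ort(M)\to M$ with its tautological one-form $\theta$ and the connection form $\omega$ of the Levi-Civita connection. Because Levi-Civita is torsion-free, the component $\Theta$ in \eqref{Cartan_eq} vanishes; the hypothesis $R=0$ forces $\Omega=0$ as well. Packaging $\psi = (\omega,\theta)$ as an $\se(n)$-valued one-form, the structure equations \eqref{Cartan_eq} collapse to the pure Maurer-Cartan equation
\[
d\psi + \tfrac{1}{2}[\psi,\psi] \;=\; 0.
\]
By Sharpe's theorem from Section~\ref{sec:Integrate}, around any chosen $u_0\in\Ort(M)$ there is a neighbourhood $V$ and a smooth map $f:V\to \E(n)$ with $\psi = f^*\eta$, where $\eta$ is the Maurer-Cartan form of $\E(n)$. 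Translating on the left, I may assume $f(u_0)$ equals the identity.

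Now I need to push $f$ down to a local map $\varphi:U\to \mathbb{R}^n$ on $U=\pi(V)\subset M$. The Euclidean group $\E(n)$ is canonically the orthonormal frame bundle of $(\mathbb{R}^n,g_{\Eucl})$, and under this identification $\eta$ decomposes into the tautological form and the Levi-Civita connection form of the flat metric. Under the right $\Ort(n)$-action both $\psi$ and $\eta$ obey the same equivariance rule $r_a^*=\Ad(a^{-1})$, so the two maps $r_a\circ f$ and $f\circ r_a$ both pull $\eta$ back to the same one-form and agree at $u_0$; by the uniqueness clause in Sharpe's theorem they coincide on the connected neighbourhood. Hence $f$ is $\Ort(n)$-equivariant, descends to a smooth $\varphi:U\to \mathbb{R}^n$, and the identity $f^*\eta = \psi$ matching tautological-to-tautological and connection-to-connection parts translates into $\varphi^* g_{\Eucl}=g$.

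The main obstacle I anticipate is the descent/equivariance step: Sharpe's theorem produces $f$ only up to a global left translation, so one must choose the normalisation at $u_0$ carefully and invoke uniqueness of solutions of the Maurer-Cartan system to promote fibrewise compatibility to genuine $\Ort(n)$-equivariance. Once that bookkeeping is in place, the identification of $\E(n)$ with the orthonormal frame bundle of Euclidean space makes the conclusion $\varphi^* g_{\Eucl}=g$ essentially automatic.
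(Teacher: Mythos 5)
Your proposal follows essentially the same route as the paper: pass to the orthonormal frame bundle, use torsion-freeness to kill $\Theta$ and the hypothesis to kill $\Omega$, integrate $\psi=(\omega,\theta)$ via the Maurer--Cartan/Sharpe theorem to get $f:V\to\E(n)$, and descend by $\Ort(n)$-equivariance to a local isometry. Your extra care on the equivariance step (normalising $f(u_0)$ and invoking uniqueness of primitives to upgrade the $\Ad$-equivariance of $\psi$ to genuine equivariance of $f$) is a point the paper asserts without elaboration, but the argument is the same.
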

\begin{proof} $\Rightarrow$: We can verify directly that the curvature of the Levi-Civita connection of the Euclidean space vanishes, and the curvature is a local invariant. \\
 
\noindent $\Leftarrow$: Let $\eta$ be the Maurer-Cartan form on $\mathfrak{se}(n)$. Assume that the curvature of the Levi-Civita connection vanishes. Then the corresponding form $\psi: \Ort(M) \to \se(n)$ satisfies $\psi + \frac{1}{2} [\psi, \psi] = 0$. This means that locally for some open set $V \subseteq \Ort(M)$, we have a function $f: V \to \E(n)$ such that $\psi = f^* \eta$. Since $\psi$ is a linear isomorphism at every point, $f$ is a local diffeomorphism. Since $\psi(v \cdot a) = \Ad(a^{-1}) \psi(v)$ for $a \in \Ort(n)$, we have
 $$f(x \cdot a) = f(x) \cdot a,$$
 meaning that $f$ descends to a map from $V/\Ort(n)$ to $\mathbb{R}^n$. Finally, since the map $v \mapsto h_u v \mapsto \theta(h_u v)$ is a linear isometry on every point, the decended map is an isometry on every tangent space.
 \end{proof}
 
 Observe that in this proof, we needed two steps:
\begin{itemize}
\item Normalize the connection to remove the torsion.
\item Then we can set up the condition of the curvature vanishing afterwards.
\end{itemize}

\subsection{Cartan connections}
The form $\psi$ is a special case of what is known as a Cartan connection. Consider first a manifold $M$ of dimension~$n$. Let $\mathfrak{g}$ be a Lie algebra with a subalgebra $\mathfrak{h}$ of codimension $n$. Let $H$ be a Lie group with Lie algebra $\mathfrak{h}$ and let $\Ad$ be a representation of $H$ on $\mathfrak{g}$ extending the usual adjoint action of $H$ on $\mathfrak{h}$. Assume that we have a principal bundle
$$H \to P \stackrel{\pi}{\to} M.$$
\emph{A Cartan connection $\psi$ on $P$ modeled on $(\mathfrak{g},\mathfrak{h})$} is a $\mathfrak{g}$-valued one form $\psi: TP \to \mathfrak{g},$
such that
\begin{enumerate}[\rm (i)]
\item For each $p \in P$, $\psi|_p$ is a linear isomorphism from $T_pP$ to $\mathfrak{g}$.
\item For each $a \in H$, $v \in TP$,
$$\psi(v \cdot a) = \Ad(a^{-1}) \psi(v).$$
\item For every $D \in \mathfrak{h}$, $p \in P$, we have $\psi(\frac{d}{dt} p \cdot  \exp_H(tD) |_{t=0}) = D$.
\end{enumerate}
The curvature of a Cartan connection is the obstruction to integrability in the sense of Section~\ref{sec:Integrate}. It is represented by a smooth function $\kappa: P \to \wedge^2 (\mathfrak{g}/\mathfrak{h})^* \otimes \mathfrak{g}$ such that,
$$\kappa(\psi(\cdot), \psi(\cdot))  = d\psi + \frac{1}{2} [\psi, \psi].$$
It follows from the definition of Cartan connections that if $\psi(w)$ takes values in $\mathfrak{h}$, then $w$ inserted into $d\psi + \frac{1}{2}[\psi,\psi]$ will vanish. Hence, we write $\wedge^2 (\mathfrak{g}/\mathfrak{h})^* \otimes \mathfrak{g}$ as the codomain of the curvature.

\section{Sub-Riemannian geometry} \label{sec:subRiemannian}
\subsection{Bracket-generating subbundle}
Let $M$ be a connected manifold. Let $E$ be a subbundle of the tangent bundle $TM$. A subbundle is called \emph{bracket-generating} if the following property holds:
Let $\Gamma(E)$ be sections of $E$, that is, all vector fields that take values in $E$. We consider such vector fields and all possible brackets
$$\spn\{ X_1, [X_1, X_2], [X_1, [X_2, X_3]] , \dots,\}, \qquad X_j \in \Gamma(E).$$
If the above vector fields span the entire tangent bundle, we say that $E$ is bracket-generating. Relative to $E$, we say that an absolutely continuous curve $\gamma: [a,b] \to M$
is \emph{horizontal} if $\dot \gamma(t) \in E_{\gamma(t)}$ for almost every $t$. The bracket-generating then have the following consequence

\begin{theorem}[The Chow-Rashevski\"i theorem]
Assume that $E$ is bracket-generating. Then for every pair of points in $x,y \in M$, there exist a horizontal curve connecting these points.
\end{theorem}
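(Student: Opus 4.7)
The plan is the classical orbit argument. Define $\mathcal{O}(x_0) \subseteq M$ to be the set of points $y$ that are endpoints of some horizontal curve issuing from $x_0$. Concatenation of horizontal curves is horizontal, and so is time-reversal (which only flips the sign of $\dot\gamma$ and keeps it in $E$), so ``reachability from $x$'' is an equivalence relation and $M$ partitions into orbits. If I can show that every orbit is open, then each orbit is also closed, as its complement is the union of the remaining open orbits, and by connectedness of $M$ a single orbit covers everything, i.e.\ $\mathcal{O}(x_0)=M$.

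The problem thus reduces to the local statement that, for every $x \in M$, the orbit $\mathcal{O}(x)$ contains a full open neighborhood of $x$. I would establish this by constructing an end-point map. Choose local sections $X_1, \dots, X_k$ trivialising $E$ near $x$. For time parameters $t = (t_1, \dots, t_N)$, any composition of the form
\[
\Phi(t) = \phi^{Z_N}_{t_N} \circ \cdots \circ \phi^{Z_1}_{t_1}(x),
\]
where each $Z_j$ is a section of $E$ and $\phi^{Z_j}_{t_j}$ its time-$t_j$ flow, sends $0$ to $x$ and lies in $\mathcal{O}(x)$, since piecewise integral curves of horizontal fields are horizontal. If I can arrange that $\Phi$ is a submersion at $0$, then its image contains a neighborhood of $x$ by the rank theorem.

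Using only the fields $X_i$, the differential $d\Phi|_0$ takes values in $E_x$, which is in general a proper subspace of $T_xM$. The key ingredient is the commutator expansion
\[
\phi^{-X_j}_{\sqrt{s}} \circ \phi^{-X_i}_{\sqrt{s}} \circ \phi^{X_j}_{\sqrt{s}} \circ \phi^{X_i}_{\sqrt{s}}(y) = y + s\,[X_i,X_j](y) + O(s^{3/2}),
\]
together with its higher-depth analogues (using a time-reparametrisation $s \mapsto s^{1/r}$ to extract an $r$-fold iterated bracket). By the bracket-generating hypothesis, a finite list of iterated brackets of the $X_i$'s evaluated at $x$ spans $T_xM$. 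Stringing the corresponding commutator blocks together yields a single smooth map $\Phi : \mathbb{R}^n \to M$ whose partial derivatives at $0$ realise each of these bracket directions, so $d\Phi|_0$ is surjective and the inverse function theorem finishes the local step.

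The main obstacle is the careful bookkeeping in this last construction: adjoining a new commutator block must contribute the intended iterated bracket to $d\Phi|_0$ while the lower-order error terms fall along directions already produced by earlier blocks. I would handle this by induction on bracket depth, using the fractional time-rescalings $s \mapsto s^{1/r}$ and a Taylor expansion (or a Baker--Campbell--Hausdorff style identity) to show that the principal contribution of each block is the desired bracket modulo the span of previously produced directions. Once this inductive submersion is in place, local openness of $\mathcal{O}(x)$ follows, and combined with the equivalence-relation argument of the first paragraph, this yields $\mathcal{O}(x_0)=M$.
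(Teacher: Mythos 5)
The paper does not actually prove this theorem: it is stated as a classical fact, with the background delegated to the references \cite{Mon02,ABB20}, so there is no in-text argument to compare yours against. Your proposal is the standard ``Chow-style'' proof --- reachability is an equivalence relation, orbits are open, hence also closed, hence equal to $M$ by connectedness (which is indeed a standing hypothesis in the paper), with openness obtained from an endpoint map built out of commutators of flows. The architecture is sound and this is one of the accepted routes to the result.

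There is, however, one step that would fail as literally written. The map $\Phi$ assembled from blocks $\phi^{-X_j}_{\sqrt{s}}\circ\phi^{-X_i}_{\sqrt{s}}\circ\phi^{X_j}_{\sqrt{s}}\circ\phi^{X_i}_{\sqrt{s}}$ and their depth-$r$ analogues with $s\mapsto s^{1/r}$ is \emph{not} smooth at $t=0$: each block is a smooth function of $s^{1/r}$, so after reparametrisation it is at best $C^1$ at the origin, with higher derivatives blowing up like negative powers of $s$. You therefore cannot invoke the smooth inverse function theorem at $0$ as stated. The gap is repairable in two standard ways: either verify that each block, hence $\Phi$, is genuinely $C^1$ up to $t=0$ (the remainder $O(s^{(r+1)/r})$ has derivative $O(s^{1/r})\to 0$, and the extension to $s<0$ by the reversed word matches to first order), after which the $C^1$ inverse function theorem suffices; or sidestep the regularity issue entirely with the orbit-dimension induction of \cite{ABB20}, where one shows that a submanifold of the orbit of dimension less than $\dim M$ admits a horizontal field not tangent to it, whose flow enlarges the submanifold. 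Related to this, your remark that lower-order error terms of a new block ``fall along directions already produced by earlier blocks'' is not quite the right mechanism: at $t=0$ every other block sits at the identity, so for $\partial_{t_i}\Phi|_0$ to exist and equal the intended iterated bracket, all terms of order $u^1,\dots,u^{r-1}$ in the $i$-th block must vanish identically. The standard commutator words do have this cancellation, but establishing it is the crux of the construction rather than bookkeeping, and your write-up should prove it explicitly.
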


\begin{example}
We consider $M = \mathbb{R}^3$ with a subbundle $E  = \spn \{ X, Y \}$.
\begin{enumerate}[\rm (a)]
\item If $X = \partial_x$ and $Y = \partial_y$, then $E$ is not bracket-generating.
\item If $X = \partial_x - \frac{1}{2} x \partial_z$ and $Y = \partial_y + \frac{1}{2} y \partial_z$, then $E$ is bracket-generating since
$$[X,Y] = \partial_z.$$
\item If $X = \partial_x$ and $Y = \partial_y + \frac{1}{2} x^2 \partial_z$, then $E$ is bracket-generating since
$$[X,Y] = x \partial_z, \qquad [X,[X,Y]]= \partial_z.$$
\end{enumerate}
\end{example}

We define $\underline{E}^1 = \Gamma(E)$, $\underline{E}^{k+1} = \underline{E}^k + [\underline{E}, \underline{E}^k]$ and write $E_x^k = \underline{E}^k|_x$.
We call the minimal number $s(x)$ such that $E^{s(x)}_x = T_xM$ \emph{the step} of $E$ at $x$. For each point of $x$, we define
$$\mathfrak{G}(x) = ( \rank E_x^1, \rank E_x^2, \dots, \rank E^{s(x)}_x),$$
as \emph{growth vector} at $x \in M$. We say that $E$ is \emph{equiregular} if $\mathfrak{G}$ is independent of $x \in M$.

\begin{example}
We again consider $M = \mathbb{R}^3$ with a subbundle $E  = \spn \{ X, Y \}$.
\begin{enumerate}[\rm (a)]
\item If $X = \partial_x - \frac{1}{2} x \partial_z$ and $Y = \partial_y + \frac{1}{2} y \partial_z$, with
$[X,Y] = \partial_z$ then $\mathfrak{G} = (2,3)$ so $E$ is equiregular.
\item If $X = \partial_x$ and $Y = \partial_y + \frac{1}{2} x^2 \partial_z$, we have brackets
$$[X,Y] = x \partial_z, \qquad [X,[X,Y]]= \partial_z, \qquad \text{and growth vector} \qquad \mathfrak{G}(x,y,z) = \left\{ \begin{array}{ll}
(2,3) & x \neq 0, \\ (2,2,3) & x =0
\end{array}\right.$$
so $E$ is not equiregular.
\end{enumerate}
\end{example}

\subsection{Sub-Riemannian structures}
On a connected manifold $M$, \emph{a sub-Riemannian structure} is a pair $(E,g)$ where $E$ is a subbundle of the tangent bundle and $g = \langle \cdot , \cdot \rangle_g$ is a smoothly varying inner product
on $E$. The triple $(M, E,g)$ is called \emph{a sub-Riemannian manifold}. For a horizontal curve $\gamma:[a,b] \to M$, we define the length
$$L(\gamma) = \int_a^b \langle \dot \gamma(t), \dot \gamma(t) \rangle^{1/2}_g \, dt.$$
We define the Carnot-Carath\'eodory distance or the sub-Riemannian distance on $M$ by
$$d_g(x,y) = \inf \{ L(\gamma) \, : \, \text{$\gamma$ is a horizontal curve from $x$ and $y$} \}.$$

\begin{example}
Let $M = \mathbb{R}^3$ and define
$$X = \partial_x - \frac{1}{2} y \partial_z, \qquad Y = \partial_y + \frac{1}{2} x \partial_z.$$
Write $E = \spn\{ X, Y\}$ and $\langle X, X \rangle_g = \langle Y, Y\rangle_g = 1$, $\langle X,Y \rangle_g = 0$.
The sub-Riemannian manifold $(M, E, g)$ is called the Heisenberg group and is an example of a Carnot group,
which we will return to in Section~\ref{sec:Carnot}.
\begin{center}
\begin{tabular}{cc}
\includegraphics[width=6cm]{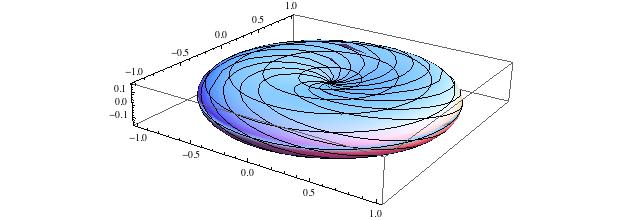} & \includegraphics[width=3cm]{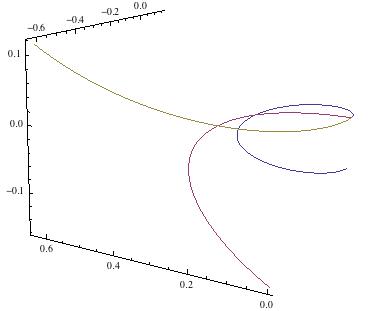} \\
Unit ball in the Heisenberg groups & Examples of geodesics in the Heisenberg group
\end{tabular}
\end{center}
\end{example}

\begin{example}[The Hopf fibration] \label{ex:Hopf}
We can consider the Hopf fibration as the surjective map from $S^3 \subseteq \mathbb{C}^2$ to the corresponding complex projective line $\mathbb{C}P^1 \cong S^2$, given by
$$\pi: (z,w) \mapsto [z,w] \text{ (equivalence class)}.$$
Let $S^3$ have its usual metric and define $E = (\ker \pi_*)^\perp$ with $g$ the restriction of this metric to $E$. We then have a sub-Riemannian manifold $(S^3, E, g)$.

Identify $S^3$ with the Lie group $\SU(2)$ whose Lie algebra $\su(2)$ is spanned by three elements $X,Y, Z$, having cyclic bracket relations
$$[X,Y] =Z, \qquad [Y, Z] = X, \qquad [Z,X] = Y.$$
Define $K = \exp(\mathbb{R} Z)$. We can see $\pi$ as the map $\SU(2) \mapsto \SU(2)/K$, $E = \spn \{ X, Y \}$ and $\langle X, X \rangle_g = \langle Y, Y \rangle_g = 1$ and $\langle X, Y \rangle_g = 0$. 
\end{example}

An isometry of sub-Riemannian manifolds $\Phi : (M, E, g) \to (\tilde M, \tilde E, \tilde g)$ is a homeomorphism such that $d_{\tilde g}(\Phi(x), \Phi(y)) = d_{g}(x,y)$. From \cite{CaLD16}, we know that when $E$ and $\tilde E$ are equiregular, then $\Phi$ is diffeomorphism $\Phi: M \to \tilde M$ with $\Phi_* E \subseteq \tilde E$ and such that
$$\langle \Phi_* v, \Phi_* w \rangle_{\tilde g} =\langle v, w \rangle_g .$$
When $(M,E,g) = (\tilde M, \tilde E, \tilde g)$, these isometries together form a Lie group.

\subsection{The flat spaces: Carnot groups} \label{sec:Carnot}
Let $\mathfrak{g}$ be a nilpotent Lie algebra. A stratification of a nilpotent Lie algebra is a decomposition
$$\mathfrak{g} = \mathfrak{g}_1 \oplus \mathfrak{g}_2 \oplus \cdots \oplus \mathfrak{g}_s,$$
such that
$$[\mathfrak{g}_1, \mathfrak{g}_k] = \mathfrak{g}_{k+1}, \qquad [\mathfrak{g}_1, \mathfrak{g}_s] = 0.$$
A Carnot algebra is a Lie algebra $\mathfrak{g}$ with a stratification and an inner product $\langle \cdot , \cdot \rangle$ on $\mathfrak{g}_1$.

Let $G$ be the corresponding simply connected Lie group. We can then define a subbundle $E$ by left translation of $\mathfrak{g}_1$. In other words, $E_a = a \cdot \mathfrak{g}_1$.
We can define a sub-Riemannian metric $g$ on $E$ by
$$\langle v, w \rangle_g = \langle a^{-1} \cdot v, a^{-1} \cdot w \rangle.$$
The sub-Riemannian manifold $(G, E, g)$ is then called \emph{a Carnot group}.

\begin{example}[The $n$-th Heisenberg group] \label{ex:nHeis}
Consider the Lie algebra $\mathfrak{g} = \mathfrak{g}_1 \oplus \mathfrak{g}_2$, where
$$\mathfrak{g}_1 = \spn\{ X_1, \dots, X_n, Y_1, \dots, Y_n \},\qquad \mathfrak{g}_2 = \spn\{ Z\}.$$
$$[X_i, X_j] = 0, \qquad [Y_i, Y_j] = 0, \qquad [X_i, Y_j] = \delta_{ij}Z.$$
The standard metric on $\mathfrak{g}_1$ is such that $X_1, \dots, X_n, Y_1, \dots, Y_n$ form an orthonormal basis. We can realize the corresponding group $G = \{ (x, y, z) \in \mathbb{R}^n \times \mathbb{R}^n \times \mathbb{R}$, with
$$(x, y, z) \cdot (\tilde x, \tilde y, \tilde z) = \left(x+ \tilde x, y + \tilde y, z + \tilde z+ \frac{1}{2}(\langle x, \tilde y\rangle - \langle y, \tilde x \rangle )\right).$$
We have that the corresponding left invariant vector fields are given by
$$X_j = \partial_{x^j} - \frac{1}{2} y^j \partial_z, \qquad Y_j = \partial_{y^j} + \frac{1}{2} x^j \partial_z, \qquad Z = \partial_z.$$
\end{example}
Since the Lie group is nilpotent, we have that the exponential map $\exp: \mathfrak{g} \to G$ is a diffeomorphism. The group operation is then given by the Baker-Campbell-Hausdorff formula {\small
$$\exp(A) \cdot \exp(B) = \exp\left(A + B + \frac{1}{2} [A,B] + \cdots \right) = \exp\left(\sum_{j=1}^s \frac{(-1)^{j-1}}{j} \sum_{\begin{subarray} q_i + p_i >0 \\ i=1, \dots, j
\end{subarray}} \frac{[X^{p_1} Y^{q_1} \cdots X^{p_j}Y^{q_j}]}{\sum_{i=1}^j (p_i+q_i) \prod_{i=1}^j p_i ! q_i!} \right).$$}
This sum is finite and gives us a group operation. For Carnot groups, we often use exponential coordinates and so the identity is often denoted $0$ rather than $1$.

\begin{example}
The Engel algebra is the algebra given by non-tivial brackets
$$\mathfrak{g} = \mathfrak{g}_1 \oplus \mathfrak{g}_2 \oplus \mathfrak{g}_3 = \spn\{ X, Y\} \oplus \spn\{ Z\} \oplus \spn \{ W \}.$$
$$[X, Y] = Z, \qquad [X, Z] = W.$$
The corresponding Engel group can be considered as $\mathbb{R}^4$ with the product
\begin{align*}
& (x,y,z,w) \cdot (\tilde x, \tilde y, \tilde z, \tilde w) \\
& = \left(x+ \tilde x,y+ \tilde y, z + \tilde z +\frac{1}{2} (x \tilde y - y \tilde x), w + \tilde w + \frac{1}{2} (x \tilde z - z \tilde x) + \frac{1}{12}(x^2 \tilde y + \tilde x^2 y - (y+\tilde y) x\tilde x) \right) .
\end{align*}
\end{example}

\begin{example} \label{ex:Free}
Let $V$ be an inner product space and consider $T^s(V)$ be the truncated tensor algebra of $V$, where we have divided out any tensor product of length $s+1$ or longer. We can then obtain a Lie algebra $\mathfrak{f} =\free_s(V)$ by dividing out the elements $x \otimes y + y \otimes x$ and $x \otimes y \otimes z + y \otimes z \otimes x + z \otimes x \otimes y$. This has a natural stratification $\mathfrak{f} = \mathfrak{f}_1 \oplus \cdots \oplus \mathfrak{f}_s$, where $\mathfrak{f}_j$ is the image under the quotient of $V^{\otimes j}$. Since $\mathfrak{f}_1 =V$ is an inner product space, we have that $\mathfrak{f}$ has the structure of a Carnot algebra.

Observe that if $\mathfrak{g} =\mathfrak{g}_1 \oplus \cdots \oplus \mathfrak{g}_s$ is a general Carnot algebra, then it will be a quotient of $\free_s(\mathfrak{g}_1)$ by dividing out the additional relations.
\end{example}

A special property for Carnot groups is that they have dilations. If $\mathfrak{g}$ is a Carnot algebra, for any $r >0$, we define a linear map $\dil_r: \mathfrak{g} \to \mathfrak{g}$ by
$$\dil_r(A) = r^k A \qquad \text{for any $A \in \mathfrak{g}_k$}.$$

\begin{lemma}
\begin{enumerate}[\rm (a)]
\item For a stratified Lie algebra,
$$[\mathfrak{g}_i, \mathfrak{g}_j] \subseteq \mathfrak{g}_{i+j},$$
where we interpret $\mathfrak{g}_{i+j} =0$ if $i+j > s$.
\item The map $\dil_r$ is a Lie algebra isomorphism.
\end{enumerate}
\end{lemma}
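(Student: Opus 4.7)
The plan is to prove the two claims in turn, using the defining relation $[\mathfrak{g}_1,\mathfrak{g}_k]=\mathfrak{g}_{k+1}$ as the only structural input.

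For part (a), I would proceed by induction on $j$ with $i$ fixed. The base case $j=1$ is immediate: by definition of stratification, $[\mathfrak{g}_i,\mathfrak{g}_1]=\mathfrak{g}_{i+1}$ when $i<s$, and equals $0$ when $i=s$, which in either case lies in $\mathfrak{g}_{i+1}$ under the convention $\mathfrak{g}_{i+j}=0$ for $i+j>s$. For the inductive step, assume the statement holds for all pairs $(i',j')$ with $j'<j$. Since $\mathfrak{g}_j=[\mathfrak{g}_1,\mathfrak{g}_{j-1}]$, it suffices to evaluate $[X,[Y,Z]]$ for $X\in\mathfrak{g}_i$, $Y\in\mathfrak{g}_1$, $Z\in\mathfrak{g}_{j-1}$. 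The Jacobi identity gives
\[
[X,[Y,Z]] = [[X,Y],Z] + [Y,[X,Z]].
\]
The first term lies in $[\mathfrak{g}_{i+1},\mathfrak{g}_{j-1}]$, and the second in $[\mathfrak{g}_1,\mathfrak{g}_{i+j-1}]$; by the inductive hypothesis (applied with $j-1<j$) and the base case respectively, both sit in $\mathfrak{g}_{i+j}$. This closes the induction.

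For part (b), the map $\dil_r$ is linear by construction and invertible with inverse $\dil_{1/r}$ (since $r>0$), so I only need to verify it respects the bracket. By bilinearity it suffices to check this on homogeneous elements $X\in\mathfrak{g}_i$, $Y\in\mathfrak{g}_j$. Part (a) tells us $[X,Y]\in\mathfrak{g}_{i+j}$, hence
\[
\dil_r[X,Y] = r^{i+j}[X,Y] = [r^i X,\, r^j Y] = [\dil_r X,\dil_r Y],
\]
with the understanding that if $i+j>s$ both sides are zero. This establishes that $\dil_r$ is a Lie algebra automorphism.

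There is no real obstacle here; the only subtle point is keeping track of the convention $\mathfrak{g}_k=0$ for $k>s$ so that the induction in (a) goes through uniformly, and ensuring the Jacobi reduction is set up so the inductive hypothesis applies to strictly smaller $j$.
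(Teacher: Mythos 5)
Your proof is correct and follows essentially the same route as the paper: a Jacobi-identity induction for (a) and a direct check on homogeneous elements for (b). The only cosmetic difference is that you induct on $j$ and decompose the second argument of the bracket via $\mathfrak{g}_j=[\mathfrak{g}_1,\mathfrak{g}_{j-1}]$, while the paper inducts on $i$ and decomposes the first argument; by antisymmetry these are mirror images of the same argument.
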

\begin{proof}
\begin{enumerate}[\rm (a)]
\item We will do induction in $i$. For $i =1$, we know that
$[\mathfrak{g}_1, \mathfrak{g}_j] \subseteq \mathfrak{g}_{j+1},$
for any $j \geq 1$. Next, assume that for some $i$, we have $[\mathfrak{g}_i, \mathfrak{g}_j] \subseteq \mathfrak{g}_{i+j}$ for any $j \geq 1$. Then any element in $\mathfrak{g}_{i+1}$ can be written as $[A,B]$ with $A \in \mathfrak{g}_1$, $B \in \mathfrak{g}_i$, and with $C \in \mathfrak{g}_{j}$, we have that
$$[[A,B], C] = [[A,C], B] + [A, [B,C]].$$
We note that $[B,C] \in \mathfrak{g}_{i+j}$ by induction hypothesis and so $[A,[B,C]] \in \mathfrak{g}_{i+j+1}$. Similarly, $[A,C] \in \mathfrak{g}_{j+1}$, so by the induction hypothesis $[[A,C],B] \in \mathfrak{g}_{i+j+1}$. The result now follows.
\item If $A \in \mathfrak{g}_i$, $B \in \mathfrak{g}_j$, then $[A,B] \in \mathfrak{g}_{i+j}$ and so
$$[\dil_r A, \dil_r B] = [r^i A, r^j B] = r^{i+j} [A,B] = \dil_r[A,B],$$
so $\dil_r$ is a Lie algebra homomorphism that is also invertible with $\dil_r^{-1} = \dil_{1/r}$. \qedhere
\end{enumerate}
\end{proof}

Since $G$ is simply connected, there is a Lie group isomorphism $\Dil_r: G \to G$ such that
$$\Dil_r(\exp(A)) = \exp(\dil_r A).$$

\begin{proposition}
\begin{enumerate}[\rm (a)]
\item If $\gamma$ is a horizontal curve in $G$, then $L(\Dil_r\gamma) = rL(\gamma)$.
\item For any $x,y \in G$, $d_g(\Dil_r(x), \Dil_r(y)) = rd_g(x,y)$.
\end{enumerate}
\end{proposition}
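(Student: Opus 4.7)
The plan is to prove (a) by showing that the pushforward $(\Dil_r)_*$ acts by multiplication by $r$ on the horizontal distribution $E$ in the left-invariant trivialization, and then to deduce (b) from (a) together with the fact that $\Dil_r$ is a diffeomorphism of $G$.

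First I would exploit that $\Dil_r \colon G \to G$ is the Lie group automorphism whose differential at the identity is $\dil_r$. For any $A \in \mathfrak{g}$, the identity $\Dil_r \circ L_a = L_{\Dil_r(a)} \circ \Dil_r$ (where $L_a$ is left translation by $a$) gives, after differentiating,
$$(\Dil_r)_*(a \cdot A) = \Dil_r(a) \cdot \dil_r(A).$$
Since $\dil_r$ preserves $\mathfrak{g}_1$ and $E$ is the left translate of $\mathfrak{g}_1$, this shows that $(\Dil_r)_* E = E$; in particular, if $\gamma \colon [a,b] \to G$ is horizontal, then so is $\Dil_r \circ \gamma$. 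Moreover, for $v = a \cdot A \in E_a$ with $A \in \mathfrak{g}_1$, we have $\dil_r A = rA$, so $(\Dil_r)_* v = \Dil_r(a) \cdot (rA)$. Because the sub-Riemannian metric is defined by left translation of the inner product on $\mathfrak{g}_1$, this yields
$$\langle (\Dil_r)_* v, (\Dil_r)_* v\rangle_g^{1/2} = \|rA\| = r \|A\| = r\langle v, v\rangle_g^{1/2}.$$
Integrating along $\gamma$ immediately gives $L(\Dil_r \circ \gamma) = r L(\gamma)$, proving part (a).

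For part (b), I would observe that $\Dil_r$ is a diffeomorphism with inverse $\Dil_{1/r}$, so the map $\gamma \mapsto \Dil_r \circ \gamma$ is a bijection between horizontal curves from $x$ to $y$ and horizontal curves from $\Dil_r(x)$ to $\Dil_r(y)$. By part (a) this bijection scales lengths by $r$, so taking infima over both sets of curves yields
$$d_g(\Dil_r(x), \Dil_r(y)) = \inf_{\gamma} L(\Dil_r \circ \gamma) = r \inf_{\gamma} L(\gamma) = r\, d_g(x,y).$$

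There is no real obstacle here; the only point that requires care is verifying that $(\Dil_r)_*$ acts as $\dil_r$ in the left-invariant trivialization, which follows at once from $\Dil_r$ being a group homomorphism whose differential at $1 \in G$ is $\dil_r$. Once this is in place, both statements reduce to the homogeneity relation $\dil_r|_{\mathfrak{g}_1} = r \cdot \id$ combined with the left-invariance of the metric.
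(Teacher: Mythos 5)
Your proof is correct and follows essentially the same route as the paper: both arguments reduce to the fact that $\Dil_r$ is a group homomorphism whose differential acts as $\dil_r$ in the left-invariant trivialization, so that $\dil_r|_{\mathfrak{g}_1} = r\,\id$ together with left-invariance of the metric gives the length scaling, and part (b) follows by the same bijection-of-horizontal-curves argument via $\Dil_{1/r}$. The only cosmetic difference is that the paper computes the left logarithmic derivative of $\Dil_r\gamma$ directly by a difference quotient, whereas you derive the same identity from $\Dil_r \circ L_a = L_{\Dil_r(a)} \circ \Dil_r$.
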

\begin{proof}
\begin{enumerate}[\rm (a)]
\item Since the structure is left invariant, we have that
$$\langle \dot \gamma, \dot \gamma\rangle_g = \langle \gamma^{-1} \cdot \dot \gamma, \gamma^{-1} \cdot \dot \gamma \rangle.$$
Furthermore,
\begin{align*}
& (\Dil_r \gamma(t) )^{-1} \cdot \frac{d}{dt} (\Dil_r \gamma) = \frac{d}{ds} (\Dil_r \gamma(t) )^{-1} \cdot (\Dil_r \gamma(t+s) )|_{s=0} \\
& = \frac{d}{ds} \Dil_r (\gamma(t)^{-1} \cdot \gamma(t+s) )|_{s=0} = \dil_r (\gamma^{-1}(t) \cdot \dot \gamma(t)).
\end{align*}
Finally, since $\gamma(t)^{-1} \cdot \dot \gamma(t)$ is in $\mathfrak{g}_1$ for almost ever $t$, we have for the same values of $t$,
$$\dil_r (\gamma^{-1}(t) \cdot \dot \gamma(t)) = r \gamma^{-1}(t) \cdot \dot \gamma(t).$$
Computing the length gives us
\begin{align*}
L(\Dil_r \gamma) & = \int_a^b \left\langle \frac{d}{dt} \Dil_r \gamma(t), \frac{d}{dt} \Dil_r \gamma(t) \right\rangle^{1/2}_g \, dt \\
& = \int_a^b r \left\langle \gamma(t)^{-1} \cdot \dot \gamma(t), \gamma(t)^{-1} \cdot \dot \gamma(t) \right\rangle^{1/2}_g \, dt = r L(\gamma).
\end{align*}
\item Note that $\Dil_r$ has inverse $\Dil_{1/r}$, so $\gamma \mapsto \Dil_r \gamma$ is an invertible map from all horizontal curves from $x$ to $y$ to all horizontal curves from $\Dil_r(x)$ and $\Dil_r(y)$, and furthermore, $\Dil_r \gamma$ has length $rL(\gamma)$. It follows that
$$d_g(\Dil_r(x), \Dil_r(y)) = \inf \{ rL(\gamma) \, : \, \text{$\gamma$ is a horizontal curve from $x$ and $y$} \} = r d_g(x,y).\qedhere$$
\end{enumerate}
\end{proof}

Since dilations satisfy the property $d_g(\Dil_r(x), \Dil_r(y)) = rd_g(x,y)$, they are the analogue of scaling by a constant in the Euclidean spaces. We can interpret this fact as saying that the Euclidean space and the Carnot group look the same when we ``zoom in''. Note that zooming in on any Riemannian manifold by replacing a distance $d_g$ by $rd_g$ and letting $r$ become large, it will converge towards the Euclidean space, not only topologically, but also metrically. Sub-Riemannian manifolds $(M, E, g)$ with $E$ equiregular will all look like Carnot groups when we ``zoom in'', but not necessarily the same Carnot group at every point. This zooming in can be made formal using Gromov-Hausdorff convergence of pointed metric spaces. For more details, we refer to~\cite{Bel96}.

\begin{remark} In the definition of Carnot groups, some authors include the possibility of just having a norm on the space $\mathfrak{g}_1$, leading to sub-Finsler geometry. This part of the theory has yet to be fully developed.
\end{remark}

\subsection{Isometries of Carnot groups}
Let $l_a:G \to G$ be the left translation $l_a(b) = a \cdot b$, $a,b \in G$. Since the sub-Riemannian structure is left invariant, each such map will be an isometry. Next, we consider a Lie group isomorphism $\Phi:G \to G$. If we consider the induced Lie algebra isomorphism $\varphi = \Phi_{*,1}: \mathfrak{g} \to \mathfrak{g}$, then for $\Phi$ to be an isometry, we must have that $\varphi$ maps $\mathfrak{g}_1$ to itself isometrically. We will then call $\varphi$ a \emph{Carnot algebra isometry}.

\begin{theorem} \cite{DO16} \label{th:IsoCarnot}
Assume that $\Phi: G \to G$ is an isometry of a Carnot group with $\Phi(1) = a$. Then $l_{a^{-1}} \circ \Phi$ is both an isometry and a Lie group isomorphism. In other words, any isometry of a Carnot group is the composition of a left translation and a Lie group isomorphism. 
\end{theorem}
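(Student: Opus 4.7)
The plan is to reduce the statement to the case $\Phi(1) = 1$, and then to identify any isometry fixing the identity with a Carnot group automorphism via the dilation structure. Since the sub-Riemannian structure on $G$ is left-invariant, $l_{a^{-1}}$ is itself an isometry, so $\psi := l_{a^{-1}} \circ \Phi$ is an isometry with $\psi(1) = 1$; it therefore suffices to prove that any isometry $\psi : G \to G$ fixing the identity is a Lie group isomorphism. By the cited smoothness result of \cite{CaLD16}, $\psi$ is a smooth diffeomorphism with $\psi_* E = E$ which is pointwise isometric on $E$. Iterating Lie brackets, $\psi_*$ preserves every $E^k$, so the differential $\psi_{*,1}$ respects the filtration of $\mathfrak{g}$ by $\bigoplus_{j \geq k} \mathfrak{g}_j$, and its associated graded is a block-diagonal linear map $\varphi = \bigoplus_{k=1}^s \varphi_k$ with $\varphi_1 \in \Ort(\mathfrak{g}_1)$.

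Next, I exploit the dilations. For each $r > 0$, the rescaling $\psi_r := \Dil_{1/r} \circ \psi \circ \Dil_r$ is again an isometry fixing $1$, by the proposition showing $\Dil_r$ scales the sub-Riemannian distance by $r$. In exponential coordinates, $\dil_r$ acts by multiplication by $r^k$ on $\mathfrak{g}_k$, so as $r \to 0$ only the graded-homogeneous part of the Taylor expansion of $\psi$ at the origin survives. The limit $\Phi_\varphi := \lim_{r \to 0} \psi_r$ thus exists smoothly, corresponds under $\exp$ to the linear map $\varphi$, commutes with every $\Dil_r$, and is itself an isometry fixing $1$. A smooth self-map of $G$ that commutes with all dilations, fixes $1$, and carries $E$ to $E$ isometrically must in fact be a Lie group automorphism: $\varphi_1$ is a linear isometry of $\mathfrak{g}_1$, and since $\mathfrak{g}$ is generated as a Lie algebra by $\mathfrak{g}_1$, preservation of the grading together with the bracket-generating distribution forces $\varphi$ to respect the Lie bracket, i.e.\ to be a Carnot algebra isomorphism.

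The remaining and most delicate step, which I expect to be the main obstacle, is to verify $\psi = \Phi_\varphi$. Both are sub-Riemannian isometries of $G$ that fix $1$ and have the same graded differential $\varphi$ at $1$, so the theorem reduces to the rigidity statement that an isometry of a Carnot group fixing a point is determined by its graded differential there. I would prove this by examining the Hamiltonian flow of normal sub-Riemannian geodesics issuing from $1$: the flow is a first-order ODE on $T^* G$ preserved by isometries, and after passing to the dilation limit the initial covector data at $1$ is acted on by $\psi$ only through $\varphi$; since in the equiregular case the union of normal geodesics from $1$ covers a dense subset of $G$, continuity then yields $\psi = \Phi_\varphi$. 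The subtlety here is handling the possible presence of abnormal extremals, which in principle prevents normal geodesics from reaching every point; this would be circumvented either by a density/approximation argument or by working directly with the distance function and its $\Dil_r$-homogeneity. Composing back with $l_a$ then gives $\Phi = l_a \circ \Phi_\varphi$ as required.
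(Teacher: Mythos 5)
First, note that the paper does not actually prove this statement: Theorem~\ref{th:IsoCarnot} is quoted from \cite{DO16} without proof, so there is no internal argument to compare yours against. Judged on its own terms, your first two steps are sound and standard: left translations are isometries of the left-invariant structure, so you may assume $\Phi(1)=1$; by \cite{CaLD16} the isometry is smooth with $\Phi_*E=E$ isometrically, hence preserves each $E^k$; and the blow-up $\Dil_{1/r}\circ\Phi\circ\Dil_r$ converges to a graded isometric automorphism $\Phi_\varphi$. (Even here, be careful: a smooth map commuting with all dilations and preserving $E$ isometrically is \emph{not} automatically graded-linear --- weighted-homogeneous polynomial components of degree $k$ in layer $k$ are a priori allowed --- and the fact that the blow-up limit is a group homomorphism is precisely Pansu's differentiability theorem, not a consequence of bracket generation as you suggest.)

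The genuine gap is your third step, which you correctly flag as the main obstacle: it is not an obstacle you have circumvented, it is the entire content of the theorem. Your normal-geodesic argument would show that two isometries fixing $1$ with the same \emph{full} differential $\Phi_{*,1}$ agree on the (dense) set of points reached by normal minimizers, since the Hamiltonian flow is determined by the initial covector. But you only know that $\Phi_{*,1}$ and $(\Phi_\varphi)_{*,1}$ have the same \emph{associated graded} map: $\Phi_{*,1}$ preserves the filtration $E^1_1\subseteq E^2_1\subseteq\cdots$ and agrees with $\varphi$ on $\mathfrak{g}_1$, but nothing so far excludes lower-triangular components sending $\mathfrak{g}_k$ into $\mathfrak{g}_{j}$ with $j<k$. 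The geodesic flow sees the full covector, so the claim that ``the initial covector data is acted on only through $\varphi$'' does not hold, and the argument does not close. The missing ingredient is exactly the rigidity statement that an isometry of a (sub-Riemannian) Carnot group fixing a point is determined by its \emph{horizontal} differential there; this is a deep fact, proved in \cite{DO16} by entirely different means (averaging to produce an auxiliary left-invariant Riemannian metric preserved by the whole isometry group, and then invoking the Gordon--Wilson/Wolf structure theory of isometry groups of Riemannian nilmanifolds, which makes $G$ normal in $\Isom(G)$ so that conjugation by a point-stabilizing isometry is an automorphism). Alternatively it follows from the canonical Cartan connection machinery that Section~4.8 of these notes is built to provide. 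As written, your reduction is circular in difficulty: you have reduced the theorem to a statement at least as hard as the theorem itself.
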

From this result, it follows that after left translation, every isometry $\Phi$ is a Lie group isomorphism uniquely determined by the isometry $\varphi = \Phi_{*,1}$ of the Carnot algebra $\mathfrak{g}$ which in turn is determined by the linear isometry $q = \varphi |_{\mathfrak{g}_1}$. However, if $q: \mathfrak{g}_1 \to \mathfrak{g}_1$ is a linear isometry, it cannot necessarily be extended to a Carnot algebra isometry.
\begin{example}
We consider the Carnot algebra isometries of the examples of Section~\ref{sec:Carnot}.
\begin{enumerate}[\rm (a)]
\item For the free nilpotent Lie algebra $\free_s(V)$, any linear isometry of $V$ can be extended to an isometry of the Carnot algebra.
\item For the Heisenberg algebra $\mathfrak{g} = \mathfrak{g}_1 \oplus \mathfrak{g}_2 = \spn\{ X_1, \dots, X_n, Y_1, \dots, Y_n\} \oplus \spn \{Z\}$ as described in Example~\ref{ex:nHeis}. Then an isometry $q: \mathfrak{g}_1 \to \mathfrak{g}_1$ can be extended to an Carnot algebra isometry if it preserves the $2$-vector
$$\sum_{i=1}^n X_i \wedge Y_i.$$
Hence, it can be considered as the intersection of $\Ort(2n)$ and $\Sp(n)$, which is isomorphic to $U(n)$.
\item The Engel algebra has an isometry group isomorphic to $\mathbb{Z}/2\mathbb{Z}$, as only $\pm \id_{\mathfrak{g}_1}$ can be extended to a Lie algebra isometry.
\end{enumerate}
\end{example}

\subsection{Symbols}
Let $(M, E, g )$ be a sub-Riemannian manifold. Assume that $E$ is equiregular. Then we have a growing flag
$$E^0 = 0 \subseteq E^1 = E \subseteq E^2 \subseteq \cdots \subseteq E^s = TM,$$
of subbundles. Define
$$\symb_x   = E_x \oplus E^2_x/E_x \oplus \cdots \oplus E^s_x/ E^{s-1}_x.$$
and a Lie algebra structure on $\symb_x$ by
$$\Lbra X_x \mod E^{i-1}_x, Y_x \mod E^{j-1}_x\Rbra = [X,Y]|_x \mod E^{i+j-1}_x, \qquad X_x \in E^{i}_x, Y_x \in E^j_x.$$
where $X$ and $Y$ are any vector field extending $X_x$ and $Y_x$. Since $[X,Y]|_x \mod E^{i+j-1}_x$ is tensorial in $X$ and~$Y$, this map does not depend on the extensions of $X_x$ and $Y_x$. This makes $(\symb_x, \Lbra \cdot, \cdot \Rbra)$ into a nilpotent Lie algebra with a stratification $\symb_{x,j} = E^j_x/E^{j-1}_x$ and with an inner product on $\symb_{x,1} = E_x$; in other words, a Carnot algebra.

From the Carnot algebra $\symb_x$, we get a corresponding Carnot groups $(\Symb_x, \tilde E, \tilde g)$. This Carnot group is what $(M,E,g)$ looks like when we ``zoom in'' close to $x$. We refer to \cite{Gro96,Bel96} for details.

We say that $(M,E, g)$ has \emph{constant symbol} $\mathfrak{g}$ if $\symb_x$ is isometric to Carnot algebra $\mathfrak{g}$ for any $x \in M$.

\begin{example}[Hopf fibration]
We consider again $\SU(2)$ with the sub-Riemannian structure as in Example~\ref{ex:Hopf}.
Then for any point $\symb_x = \spn \{ X, Y \} \oplus \spn \{ Z \mod E\}$, with
$$\Lbra X,Y\Rbra = Z \mod E, \qquad \Lbra X,Z \mod E\Rbra = \Lbra Y,Z \mod E\Rbra =0.$$
We see that $\symb_x$ is the Heisenberg algebra.
\end{example}

\begin{example}
Consider $\mathbb{R}^5$ with coordinates $(x_1, x_2, y_1, y_2, z)$ with $(E,g)$ given by an orthonormal basis
$$A_1 = \partial_{x_1}, \qquad B_1 = (1+y^2_1)(\partial_{y_1} + x_1 \partial_{x_1}).$$
$$A_2 = \partial_{x_2}, \qquad B_2 = \partial_{y_2} + x_1 \partial_{x_1}.$$
Then $\symb_{x_1, x_2, y_1, y_2, z}$ is isometric to the 2nd Heisenberg algebra
$$[X_1, Y_1] = [X_2, Y_2] = Z,$$
but with an orthonormal basis given by $\sqrt{1+y^2_1} X_1$,$\sqrt{1+y^2_1} Y_2$, $X_2$, $Y_2$, where $y_1$ is now considered as a constant. These are not isometric for different values of $y_1^2$.  
\end{example}

If there is just one Carnot algebra in the class of growth vectors, then all sub-Riemannian manifolds with that growth vector will have constant symbol.

\begin{example}
Consider two 2-dimensional Riemannian manifolds $\Sigma$ and $\tilde \Sigma$, whose Gaussian curvature never coinsides. On $\hat M = \Ort(\Sigma) \times \Ort(\tilde \Sigma)$, consider the sub-Riemannian structure $(\hat E, \hat g)$ with orthogonal vector fields $H_{e_1} + \tilde H_{e_1}$ and $H_{e_2} + \tilde H_{e_2}$. Here $H_{e_j}$ and $\tilde H_{e_j}$ corresponds to the canonical horizontal vector fields of the Levi-Civita connection on the orthonormal frame bundle of respectively $\Sigma$ and $\tilde \Sigma$, as defined in Section~\ref{sec:ConFrame}. Define $M = \hat M/\Ort(n)$ as the quotient the diagonal action and let $(E,g)$ be the induced sub-Riemannian structure on $(M,E,g)$. Then $(M,E,g)$ has growth vector $(2,3,5)$. Since there is only one Carnot group with this growth vector, these have all constant symbol.

This sub-Riemannian manifold correspond to the dynamical system of rolling $\Sigma$ on $\tilde \Sigma$ without twisting or slipping. For details, see \cite{GGMSL12,Gro12}.
\end{example}

\subsection{Affine connections: Why we cannot have everything we want}
Let $(M, E, g)$ be a sub-Riemannian manifold. Let $\nabla$ be an affine connection of $TM$. If we want $\nabla$ to preserve the sub-Riemannian structure, we should demand that parallel transport take orthonormal frames of $E$ to orthonormal frames of $E$. We therefore introduce the following definition.
\begin{definition}
A connection $\nabla$ is said to be compatible with the sub-Riemannian structure $(E,g)$ if
\begin{enumerate}[\rm (i)]
\item for any $Y \in \Gamma(H)$ and $X \in \Gamma(TM)$, we have that $\nabla_X Y$ takes values in $E$ as well,
\item for any $Y_1, Y_2 \in \Gamma(H)$ and $X \in \Gamma(TM)$, we have
$$X \langle Y_1, Y_2 \rangle_g = \langle \nabla_X Y_1, Y_2 \rangle_g + \langle Y_1, \nabla_X Y_2 \rangle_g.$$
\end{enumerate}
\end{definition}
We can also describe this property using parallel transport.
\begin{lemma}
A connection $\nabla$ is compatible with the sub-Riemannian metric if and only if $\nabla$ always takes an orthonormal basis of $E$ to another orthonormal basis of $E$.
\end{lemma}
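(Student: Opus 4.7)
The plan is to mimic the proof of the analogous Riemannian statement given earlier, but now one has to track two pieces of data along the curve: staying inside $E$, and orthonormality. So the proof naturally splits into handling condition (i) and condition (ii) separately.

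For the forward direction, suppose $\nabla$ is compatible. Fix a curve $\gamma$ and an orthonormal basis $u_1,\dots,u_k$ of $E_{\gamma(0)}$. First I would check that parallel transport of a vector in $E$ stays in $E$. Fix a local orthonormal frame $X_1,\dots,X_k$ of $E$ in a neighborhood of $\gamma(0)$. By property (i), $\nabla_{\dot\gamma} X_i = \sum_j \omega_i^j(t) X_j$ for smooth functions $\omega_i^j$, so the parallel transport equation for a section $Y(t)=\sum c_i(t) X_i(\gamma(t))$ reduces to the linear ODE $\dot c_j = -\sum_i \omega_i^j c_i$ in the coefficients alone; existence and uniqueness of solutions of this ODE guarantees that the parallel transport $U_i(t)$ remains in $E_{\gamma(t)}$ at every $t$. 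Then orthonormality is preserved by exactly the same calculation as in the Riemannian lemma: $\frac{d}{dt}\langle U_i(t),U_j(t)\rangle_g = \langle \nabla_{\dot\gamma} U_i, U_j\rangle_g + \langle U_i, \nabla_{\dot\gamma} U_j\rangle_g = 0$ using compatibility property (ii), since each $U_i$ lies in $E$.

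For the reverse direction, assume parallel transport of orthonormal frames of $E$ always yields orthonormal frames of $E$. To recover (i), take $X\in\Gamma(TM)$, $Y\in\Gamma(E)$, and a point $x$. Choose $\gamma$ with $\dot\gamma(0)=X|_x$ and parallel-transport an orthonormal basis $u_1,\dots,u_k$ of $E_x$ along $\gamma$ to get $U_1(t),\dots,U_k(t)$, which by hypothesis is an orthonormal basis of $E_{\gamma(t)}$. Expand $Y|_{\gamma(t)}=\sum_i Y^i(t)U_i(t)$; since $\nabla_{\dot\gamma}U_i=0$, we obtain $\nabla_{\dot\gamma}Y=\sum_i \dot Y^i(0)U_i(0)\in E_x$, giving property (i). For (ii), the derivation is identical to the Riemannian lemma: expand $Y_1,Y_2 \in \Gamma(E)$ in the parallel orthonormal frame along $\gamma$ and differentiate the inner product, which becomes $\sum_i Y_1^i(t)Y_2^i(t)$, then use the product rule to rewrite the derivative as $\langle\nabla_X Y_1,Y_2\rangle_g + \langle Y_1,\nabla_X Y_2\rangle_g$.

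The main subtlety — which is absent in the Riemannian case — is the first step of the forward direction: one must verify that the parallel transport of an element of $E$ remains in $E$, since $\nabla$ is a connection on the full tangent bundle $TM$ rather than on the subbundle $E$. The argument above reduces this to a standard linear ODE by using property (i) to produce connection coefficients entirely within $E$; this is the only place where more care is needed than in the Riemannian proof, and once it is handled, the rest of the proof is a direct transcription of the computation given for the Riemannian lemma.
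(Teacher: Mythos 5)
Your proof is correct and follows essentially the same route as the paper's: in the forward direction both arguments use property (i) to express $\nabla_{\dot\gamma}$ of an $E$-frame through connection coefficients inside $E$, reduce parallel transport to a linear ODE in those coefficients to see that $E$ is preserved, and then invoke (ii) to preserve orthonormality; the converse is the same expansion in a parallel orthonormal frame. Your write-up of the converse is somewhat more explicit than the paper's one-line sketch, but the content is identical.
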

\begin{proof}
Let $E$ have rank $k$.
First assume that (i) and (ii) holds for $\nabla$. Let $\gamma(t)$ be a curve in $M$, and let $Y_1(t), \dots, Y_k(t)$ be an orthonormal frame of vector fields along $\gamma(t)$. We  see that
$$\nabla_{\dot \gamma} \sum_{j=1}^k a_j(t) Y_j(t) = \sum_{i=1}^k (\dot a_j(t) Y_j(t) + a_j(t) \nabla_{\dot \gamma} Y_j(t)).$$
By property (i), there are coefficients $c_{i}^j$ such that $\nabla_{\dot \gamma} Y_i = \sum_{k=1}^k c_{j}^i Y_i$, and we can write
$$\nabla_{\dot \gamma} \sum_{j=1}^n a_j(t) Y_j(t) = \sum_{i=1}^k \left(\dot a_j(t) + \sum_{i=1}^k a_i(t) c_j^i(t) \right) Y_j(t).$$
We can then solve the equation
$$\frac{d}{dt} a_j = - (c_{j}^i) (a_i),$$
to get a parallel vector field with an arbitrary initial value in $E$. It follows that any vectors in $E$ remains in $E$ under parallel transport. Next, if $Y_1(t), \dots, Y_k(t)$ is a parallel basis along $\gamma$ that has orthonormal initial value. then from~(ii)
$$\frac{d}{dt} \langle Y_i(t), Y_j(t) \rangle_{g} = 0,$$
so they remain orthonormal.

Conversely, we can prove (i) and (ii) from the property of parallel transporting orthonormal bases to orthonormal basis, by using that $\nabla_X Y = \nabla_{\dot \gamma} Z(t)$ for $Z(t) = Y|_{\gamma(t)}$.
\end{proof}

\begin{proposition}
Assume that $E$ is a proper subbundle\footnote{meaning that $E$ is not the full tangent bundle} of $TM$  and that $E$ is bracket-generating. Then there are no torsion-free affine connections that are also compatible with $(E,g)$.
\end{proposition}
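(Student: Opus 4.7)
The plan is to derive a direct contradiction from the three assumed properties: compatibility (which forces the connection to preserve $E$), torsion-freeness (which equates $\nabla_X Y - \nabla_Y X$ with $[X,Y]$), and the bracket-generating plus properness hypothesis on $E$ (which guarantees some bracket of horizontal sections escapes $E$).

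First, I would exploit condition (i) of the definition of compatibility. Given any $X, Y \in \Gamma(E)$, compatibility guarantees both $\nabla_X Y \in \Gamma(E)$ and $\nabla_Y X \in \Gamma(E)$. Combined with the torsion-free assumption $T(X,Y) = \nabla_X Y - \nabla_Y X - [X,Y] = 0$, this would immediately yield
\[
[X,Y] = \nabla_X Y - \nabla_Y X \in \Gamma(E),
\]
for every pair of horizontal vector fields $X, Y$. In other words, $\Gamma(E)$ would be closed under the Lie bracket, so $\underline{E}^2 = \underline{E}^1$ and hence $E^k = E$ for all $k \geq 1$.

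Finally, I would combine this with the bracket-generating assumption. Since $E$ is bracket-generating, iterating brackets must eventually span $TM$, so $E^{s(x)}_x = T_xM$ for some step $s(x)$. But the previous step shows $E^k = E$ for all $k$, which forces $E = TM$. This contradicts the hypothesis that $E$ is a \emph{proper} subbundle of $TM$, completing the proof by contradiction.

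There is no real obstacle here; the argument is a short direct contradiction, and the only subtlety worth pointing out is that condition (i) of compatibility is used on its own — the inner-product condition (ii) plays no role in this impossibility statement. The content of the proposition is really that one cannot simultaneously have $\nabla$ preserve $E$ and have vanishing torsion when $E$ is a genuine non-involutive distribution, which is exactly why sub-Riemannian geometry requires a different normalization of its canonical connection than the Levi-Civita choice.
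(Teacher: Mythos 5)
Your argument is correct and is essentially identical to the paper's own proof: both use compatibility condition (i) together with torsion-freeness to conclude $[X,Y]=\nabla_X Y-\nabla_Y X\in\Gamma(E)$ for horizontal $X,Y$, so that $E$ is involutive and the bracket-generating hypothesis forces $E=TM$, contradicting properness. Your added remark that condition (ii) plays no role is accurate and matches the paper, which likewise invokes only condition (i).
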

This result highlights that we cannot use the same normalization condition for a connection of a sub-Riemannian manifold as we did for a Riemannian manifold.
\begin{proof}
Assume that $\nabla$ is an affine connection that is both compatible with $(E,g)$ and torsion-free. Let $Y_1, Y_2$ be any pair of vector fields with values in $E$. Then by (i), we have that
$$[Y_1, Y_2] = \nabla_{Y_1} Y_2 - \nabla_{Y_2} Y_1,$$
takes values in $E$ as well. This shows that any bracket of vector fields in $E$, also takes values in $E$. Since $E$ is bracket-generating, it follows that $E = TM$. But we assumed that this is not the case, so we have a contradiction.
\end{proof}

\subsection{Sub-Riemannian frame bundles} \label{sec:SRFrame}
From now on, we will restrict ourselves to sub-Riemannian manifolds with constant symbol $\mathfrak{g}$. Let $G$ be the corresponding Carnot group with sub-Riemannian structure $(\tilde E, \tilde g)$. Define $G_0 = \Isom(\mathfrak{g})$ as the Lie group of Cartan isometries with Lie algebra $\mathfrak{g}_0 =\isom(\mathfrak{g})$. We then see that $\mathfrak{g}_0$ consists of derivations of $\mathfrak{g}$ preserving the stratification and whose restriction to $\mathfrak{g}_1$ is skew symmetric. Define a new algebra $\hat{\mathfrak{g}} = \mathfrak{g}_0 \oplus \mathfrak{g}$ such that both $\mathfrak{g}_0$ and $\mathfrak{g}$ are subalgebras and with
$$[D, A] = DA.$$

\begin{proposition}
$\hat{\mathfrak{g}}$ is the Lie algebra of isometries of the isometry group $\hat G= \Isom(G, \tilde E, \tilde g)$.
\end{proposition}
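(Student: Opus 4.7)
The plan is to exhibit $\hat G$ as a semidirect product of Lie groups, then read off its Lie algebra from this decomposition. By Theorem~\ref{th:IsoCarnot}, every isometry $\Phi \in \hat G$ factors uniquely as $\Phi = l_a \circ \Psi$ with $a = \Phi(1) \in G$ and $\Psi$ a Lie group automorphism of $G$ which is also an isometry. Since $G$ is simply connected and nilpotent, such automorphisms are in bijective correspondence with their derivatives at the identity; and an automorphism is an isometry precisely when its derivative is a Carnot algebra isometry, i.e. an element of $G_0 = \Isom(\mathfrak{g})$. Writing $\Psi_\varphi$ for the Lie group automorphism of $G$ induced by $\varphi \in G_0$, this gives a bijection $G \times G_0 \to \hat G$, $(a, \varphi) \mapsto l_a \circ \Psi_\varphi$.

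Next I would transfer the group operation of $\hat G$ through this bijection. A short computation using $\Psi_\varphi \circ l_b = l_{\Psi_\varphi(b)} \circ \Psi_\varphi$ (since $\Psi_\varphi$ is a group homomorphism) gives
$$(a, \varphi) \cdot (b, \psi) = (a \cdot \Psi_\varphi(b),\ \varphi \circ \psi),$$
so that $\hat G \cong G \rtimes G_0$ as a Lie group, with $G_0$ acting on $G$ by $(\varphi, b) \mapsto \Psi_\varphi(b)$. In particular $G$ embeds as the normal subgroup of left translations, and $G_0$ embeds as the stabilizer of $1 \in G$.

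Finally I would compute the Lie algebra of this semidirect product. As a vector space it is $\mathfrak{g}_0 \oplus \mathfrak{g}$, with $\mathfrak{g}$ and $\mathfrak{g}_0$ appearing as subalgebras carrying their own brackets. The cross bracket is determined by the derivative at the identity of the action of $G_0$ on $G$. For $D \in \mathfrak{g}_0$, the one-parameter subgroup $\exp_{G_0}(tD)$ acts on $G$ by Lie group automorphisms whose derivative at $1 \in G$ is the linear map $e^{tD} : \mathfrak{g} \to \mathfrak{g}$, so
$$[D, A] = \left.\frac{d}{dt}\right|_{t=0} e^{tD} A = D A,$$
matching exactly the bracket prescribed in the definition of $\hat{\mathfrak{g}}$.

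The main obstacle I anticipate is the lifting and smoothness step: I need to know that every $\varphi \in G_0$ genuinely extends to a global Lie group automorphism $\Psi_\varphi$ of $G$, and that the assignment $\varphi \mapsto \Psi_\varphi$ is smooth, so that the bijection $G \times G_0 \to \hat G$ is a Lie group isomorphism rather than merely a set bijection. Simple connectedness of $G$ gives the lift, and the Baker-Campbell-Hausdorff formula (which, as noted in the text, is a polynomial expression in the nilpotent case) gives $\Psi_\varphi = \exp_G \circ \varphi \circ \exp_G^{-1}$, making smoothness manifest.
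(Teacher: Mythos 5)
Your proposal is correct and follows essentially the same route as the paper: decompose $\hat G$ via Theorem~\ref{th:IsoCarnot} into left translations and isometric automorphisms, identify the semidirect-product multiplication $(a,\varphi)\cdot(b,\psi) = (a\cdot\Psi_\varphi(b), \varphi\psi)$, and read off the Lie algebra with cross bracket $[D,A]=DA$. The only addition is your explicit justification of the lift $\Psi_\varphi = \exp_G \circ \varphi \circ \exp_G^{-1}$ and its smoothness, a detail the paper leaves implicit.
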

\begin{proof}
Let $\varphi: \mathfrak{g} \to \mathfrak{g}$ be a Carnot algebra isometry with corresponding isometry $\Phi_{\varphi}:G \to G$ that is also a Lie algebra automorphism. By Theorem~\ref{th:IsoCarnot}, we can define an invertible map from $G_0 \times G$ to $\hat G$ by $(\varphi, a) \mapsto \Psi_{\varphi,a} = l_a \circ \Phi_\varphi$. We observe that 
$$\Phi_{\varphi_1} \circ \Phi_{\varphi_2}= \Phi_{\varphi_1  \varphi_2}, \qquad \Phi_\varphi \circ l_{a} = l_{\Phi_\varphi(a)} \circ \Phi_{\varphi}.$$
Hence
\begin{align*}
& \Phi_{\varphi_1, a_1} \circ \Phi_{\varphi_2,a_2} = l_{a_1 \Phi_{\varphi_1}(a_2)} \circ l_{\Phi_{\varphi_1}(a_2^{-1})} \Phi_{\varphi_1} l_{a_2} \Phi_{\varphi_2} = l_{a_1 \Phi_{\varphi_1}(a_2)} \circ \Phi_{\varphi_1 \varphi_2} .
\end{align*}
As such, we can consider $\hat G$ as $G_0 \times G$ with the group operation $(\varphi_1, a_1) \cdot (\varphi_2, a_2) = (\varphi_1 \varphi_2, a_1 \varphi_2(a_2))$. Computing the Lie algebra from this multiplication rule, we get the Lie algebra $\hat{\mathfrak{g}}$.
\end{proof}

Let $(M,E,g)$ be a sub-Riemannian manifold with constant symbol $\mathfrak{g}$. Define a vector bundle of symbols $\symb = (\symb_x)_{x\in M} \to M$ over $M$. We will call this \emph{the non-holonomic tangent bundle}. We now define non-holonomic frame as a Carnot isomorphism $u: \mathfrak{g} \to \symb_x$. We will write a the space of all such frames as~$\mathscr{F}_x$. We again have a right action of $G_0$ on $\mathscr{F}_x$ by precomposition. This gives us a principal bundle
$$G_0 \to \mathscr{F} \to M.$$
Let $\psi$ be a Cartan connection with values in $\hat{\mathfrak{g}}$. We can write it as $\psi = (\omega, \theta)$ where $\omega$ and $\theta$ have values in respectively $\mathfrak{g}_0$ and $\mathfrak{g}$. Observe the following.
\begin{enumerate}[$\bullet$]
\item The form $\omega: T\scrF \to \mathfrak{g}_0$ is a principal connection on the bundle $\mathscr{F}$. It corresponds to an affine connection $\tilde \nabla$ on $\symb$ such that parallel transport maps are Carnot algebra isometries.
\item Write $\calH = \ker \omega$ and define $h_u v$ as the horizontal lift of $v \in T_xM$ to $T_u \scrF$, $u \in \scrF$ with respect to this connection. The form $\theta: T\scrF \to \mathfrak{g}$ corresponds to a vector bundle isomorphism $I: TM \to \symb$ in the following way. For any $v \in T_x M$, and $u\in \mathscr{F}_x$, 
\begin{equation}
\label{Imap} I: v \mapsto h_u v \in T_u \scrF \mapsto \theta(h_u v) \in \mathfrak{g} \mapsto u (\theta(h_u v)) \in \symb_x.\end{equation}
We can see this map as a way choosing complements to
$$E^{k+1} = V^{k+1} \oplus E^{k},$$
by $I^{-1}(E^{k+1}/E^k)$. We will call such an $I$ \emph{an $E$-grading}.
\item From the previous structures, we can define a connection $\nabla = I^{-1} \tilde\nabla I$ on $TM$. 
\end{enumerate}
The above definition makes sense because of the following.
\begin{lemma}
The map $I$ is well defined, that is, the mapping $I_x = I |_{T_xM}$ in \eqref{Imap} does not depend on the choice of element $u \in \mathscr{F}_x$.
\end{lemma}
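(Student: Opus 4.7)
The plan is to exploit the fact that the principal $G_0$-bundle $\scrF \to M$ has transitive (indeed free and transitive) $G_0$-action on fibers: any two frames $u, u' \in \scrF_x$ are related by $u' = u \cdot a$ for a unique $a \in G_0$. So it suffices to verify that the value $I_x(v)$ is unchanged when we replace $u$ by $u \cdot a$. The key ingredients I will combine are (i) the $G_0$-equivariance of the horizontal distribution $\calH = \ker \omega$ and (ii) the Cartan-connection transformation rule $\psi(w \cdot a) = \Ad(a^{-1})\psi(w)$, specialized to the $\mathfrak{g}$-component $\theta$.

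First I would verify that horizontal lifts are $G_0$-equivariant, that is,
\[h_{u \cdot a} v = h_u v \cdot a, \qquad a \in G_0, v \in T_x M.\]
Indeed, because $\omega$ is a principal connection form, $\omega(w \cdot a) = \Ad(a^{-1}) \omega(w)$, hence $w \in \calH_u$ implies $w \cdot a \in \calH_{u\cdot a}$. Since right multiplication covers the identity on $M$, we also have $\pi_*(h_u v \cdot a) = \pi_* h_u v = v$, and the uniqueness of horizontal lifts gives the claimed equality.

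Next I would compute how $\theta$ intertwines with the $G_0$-action. Because $\mathfrak{g}$ is an ideal of $\hat{\mathfrak{g}} = \mathfrak{g}_0 \oplus \mathfrak{g}$ with bracket $[D,A] = DA$, the adjoint action of $a \in G_0 \subseteq \hat G$ on $A \in \mathfrak{g}$ is just the Carnot isometry applied to $A$: $\Ad(a) A = a(A)$. (This is a short computation using the semidirect-product structure of $\hat G = G_0 \ltimes G$ given in the preceding proposition.) The transformation law of the Cartan connection therefore restricts on the $\mathfrak{g}$-component to
\[\theta(w \cdot a) = a^{-1}\bigl(\theta(w)\bigr).\]

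Combining the two equivariance statements with the definition $u \cdot a = u \circ a$, I obtain
\[(u \cdot a)\bigl(\theta(h_{u\cdot a} v)\bigr) = (u \circ a)\bigl(\theta(h_u v \cdot a)\bigr) = (u \circ a)\bigl(a^{-1}(\theta(h_u v))\bigr) = u\bigl(\theta(h_u v)\bigr),\]
so $I_x(v)$ is independent of the frame $u \in \scrF_x$. The only delicate point is the identification of $\Ad(a)|_{\mathfrak{g}}$ with the Carnot isometry $a$, which is where the semidirect structure $\hat G = G_0 \ltimes G$ is used; everything else is a direct transcription of the defining properties of a Cartan connection on the principal $G_0$-bundle $\scrF \to M$.
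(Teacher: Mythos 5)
Your argument is correct and is essentially the paper's own proof: the paper verifies the same chain of equalities $(u\cdot a)(\theta(h_{u\cdot a}v)) = u(a(a^{-1}\theta(h_u v))) = u(\theta(h_u v))$, relying implicitly on exactly the two equivariance facts you spell out ($h_{u\cdot a}v = h_u v\cdot a$ and $\theta(w\cdot a) = a^{-1}\theta(w)$ via $\Ad(a^{-1})|_{\mathfrak{g}}$). Your version merely makes these two ingredients, and the identification of $\Ad(a)|_{\mathfrak{g}}$ with the Carnot isometry action, explicit.
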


\begin{proof}
If $u \in \mathscr{F}_x$, then any other element is on the form $u \cdot a = u \circ a$ with $a \in G_0$, we then observe that
$$(u \cdot a) (\theta(h_{u \cdot a} v)) = u(a( \theta(h_{u}v \cdot a))) =  u(a( a^{-1} \theta(h_{u}v \cdot a))) = u(\theta(h_{u}v)) . \qedhere$$
\end{proof}

In summary, any choice of Cartan connection on $\mathscr{F}$ gives us an identification $I: TM \to \symb$ and connection $\nabla$ on $TM$. The connection $\nabla$ has the following properties.
\begin{enumerate}[\rm (i)]
\item The connection $\nabla$ is compatible with sub-Riemannian metric $(E,g)$,
\item Recall that $I$ gives a decomposition $TM = V_1 \oplus \cdots \oplus V_s$ such that $E^{k+1} = E^k \oplus V^{k+1}$. Each vector bundle $V^k$ will be parallel with respect to $\nabla$.
\item Define a tensor $\mathbb{T}$ such that
$$\mathbb{T}(v,w) = - I^{-1} \Lbra Iv, Iw \Rbra.$$
Then $\nabla \mathbb{T} =0$. We can consider $\mathbb{T}$ as the degree-zero part of the torsion $T$ of $\nabla$ since
$$\mathbb{T}(v,w) = \sum_{i,j=1} \pr_{V_{i+j}} T(\pr_{V_i} v, \pr_{V_j} w).$$
\end{enumerate}
We will call $(E, g, I)$ a graded sub-Riemannian manifold and say that a connection satisfying the above is \emph{strongly compatible} with $(E,g,I)$. We refer to \cite{Gro20b} for details, where also the following result is found.
\begin{theorem}[Partial flatness theorem]
If $\nabla$ is strongly compatible with $(E,g,I)$, and if
\begin{equation} \label{flat} R =0, \qquad T = \mathbb{T},\end{equation}
then $(M,E,g)$ is locally isometric to a Carnot group.
\end{theorem}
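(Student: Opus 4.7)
The plan is to mimic the proof of the Riemannian flatness theorem in Section~\ref{sec:Levi-Civita}, replacing the Euclidean group $\E(n)$ by the isometry group $\hat G = \Isom(G,\tilde E,\tilde g)$, whose Lie algebra $\hat{\mathfrak{g}} = \mathfrak{g}_0 \oplus \mathfrak{g}$ already appeared as the target of the Cartan connection $\psi = (\omega,\theta)$ on the sub-Riemannian frame bundle $\scrF \to M$ from Section~\ref{sec:SRFrame}. The goal is to show that the hypotheses $R = 0$ and $T = \mathbb{T}$ are exactly the vanishing of the Cartan curvature of $\psi$, and then integrate $\psi$ to a local diffeomorphism into $\hat G$ which descends to the desired local isometry $M \to G$.

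First I would split the Cartan curvature into its two components. Since $\mathfrak{g}_0$ is a subalgebra acting on $\mathfrak{g}$ by derivations and $\mathfrak{g}$ is a subalgebra as well, expanding $d\psi + \tfrac{1}{2}[\psi,\psi]$ along $\hat{\mathfrak{g}} = \mathfrak{g}_0 \oplus \mathfrak{g}$ gives
\begin{equation*}
d\omega + \tfrac12[\omega,\omega] = \Omega \in \mathfrak{g}_0, \qquad d\theta + [\omega,\theta] + \tfrac12[\theta,\theta]_{\mathfrak{g}} = \Theta \in \mathfrak{g}.
\end{equation*}
Pulling these equations back by the canonical horizontal vector fields $H_p$ and the vertical fields $\xi_D$ (as in \eqref{frame_brackets}), $\Omega$ is precisely the curvature tensor $R$ of $\nabla$ read in the frame, while $d\theta + [\omega,\theta]$ is the full torsion $T$ read in the frame and $-\tfrac12[\theta,\theta]_{\mathfrak{g}}$ is exactly $\mathbb{T}$, the piece of the torsion forced by the symbol bracket $\Lbra\cdot,\cdot\Rbra$. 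Thus $\Omega = 0$ iff $R = 0$, and $\Theta = 0$ iff $T = \mathbb{T}$. The verification that $-\tfrac12[\theta,\theta]_{\mathfrak{g}}$ corresponds to $\mathbb{T}$ is the main obstacle; it is precisely the condition that $\nabla$ is strongly compatible (so that the symbol algebra is preserved at every point by $I$) which makes this identification globally well-defined on $\scrF$.

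Granted the above, under the hypotheses $R = 0$ and $T = \mathbb{T}$ the full Cartan curvature vanishes: $d\psi + \tfrac12[\psi,\psi] = 0$. Let $\eta$ be the Maurer-Cartan form on $\hat G$. By the integrability theorem from Section~\ref{sec:Integrate}, for each $u_0 \in \scrF$ there exists an open neighbourhood $V \subseteq \scrF$ of $u_0$ and a smooth map $f \colon V \to \hat G$ with $\psi = f^*\eta$. Since $\psi|_u \colon T_u\scrF \to \hat{\mathfrak{g}}$ is a linear isomorphism at every point (property (i) of a Cartan connection), $f$ is a local diffeomorphism. The equivariance property (ii) of Cartan connections, $\psi(v \cdot a) = \Ad(a^{-1})\psi(v)$ for $a \in G_0$, together with the equivariance of $\eta$ on $\hat G$, forces
\begin{equation*}
f(u \cdot a) = f(u) \cdot a, \qquad a \in G_0,
\end{equation*}
exactly as in the Riemannian proof.

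Finally I would descend $f$ through the quotient $\scrF/G_0 = M$ and $\hat G/G_0 = G$ (using the identification of $\hat G$ with $G_0 \ltimes G$ via Theorem~\ref{th:IsoCarnot}) to obtain a local diffeomorphism $\bar f \colon \pi(V) \to G$. To see $\bar f$ is a sub-Riemannian isometry, I would check that it pulls the Carnot structure on $G$ back to $(E,g)$ on $M$: the $\mathfrak{g}_1$-component of $\theta$ reads a horizontal tangent vector $v \in E_x$ in the frame $u$ via the composition $v \mapsto h_u v \mapsto \theta_{\mathfrak{g}_1}(h_u v) \in \mathfrak{g}_1$, which by construction of $I$ in \eqref{Imap} is an isometry onto its image, and the left Maurer-Cartan form on $G$ reads horizontal vectors on $G$ into $\mathfrak{g}_1$ in exactly the same isometric way. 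Therefore $\bar f$ is a local isometry of sub-Riemannian manifolds, establishing that $(M,E,g)$ is locally isometric to the Carnot group $G$.
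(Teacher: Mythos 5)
The paper does not actually prove this theorem in the text---it defers the details to \cite{Gro20b}---but your argument is precisely the generalization of the paper's own proof of the Riemannian flatness theorem in Section~\ref{sec:Levi-Civita} that the exposition is pointing to: identify $R=0$ and $T=\mathbb{T}$ with the vanishing of the Cartan curvature of $\psi=(\omega,\theta)$ (the one new feature being the $\tfrac12[\theta,\theta]_{\mathfrak{g}}$ term from the non-abelian Carnot bracket, which is exactly what produces $\mathbb{T}$, with the signs working out as you state), integrate by the Maurer--Cartan criterion of Section~\ref{sec:Integrate}, and descend through the $G_0$-action using $\hat G \cong G_0\ltimes G$. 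Your outline is correct and matches the intended approach; the only step you gloss over in the same way the paper's Riemannian proof does is that $\psi=f^*\eta$ determines $f$ only up to left translation in $\hat G$, so the equivariance $f(u\cdot a)=f(u)\cdot a$ requires normalizing $f$ at a point and comparing the two maps via their common pullback of $\eta$.
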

To make the converse statement, we need a canonical choice for such gradings $I$ and connections $\nabla$.

\subsection{Canoncial grading and connection}
Let $(M,H,g)$ be a sub-Riemannian manifold with constant symbol $\mathfrak{g}$, and with sub-Riemannian frame bundle $G_0 \to \scrF \to M$. Then on $\scrF$, there is a canonical condition for the curvature given by Morimoto \cite{Mor93,Mor08}. Let us introduce some preliminary concepts below in order to understand this theory.
\begin{enumerate}[$\bullet$]
\item On the set of \underline{linear} $k$-forms on $\mathfrak{g}$ with values in $\hat{\mathfrak{g}} = \mathfrak{g}_0 \oplus \mathfrak{g}$, we define the Spencer differential/the Lie algebra cohomology differential by $\partial: \wedge^k \mathfrak{g}^* \otimes \hat{\mathfrak{g}} \to \wedge^{k+1} \mathfrak{g}^* \otimes \hat{\mathfrak{g}}$, $k=1, \dots, n$, by
\begin{align*}
(\partial \alpha)(A_0, \dots, A_k) &= \sum_{i=0}^n (-1)^i [A_i, \alpha(A_0, \dots, \hat A_i, \dots, A_k)] \\
& \qquad + \sum_{i<j} (-1)^{i+j} \alpha([A_i, A_j], A_0, \dots, \hat A_i, \dots, \hat A_j, \dots, A_k).
\end{align*}
\item We can introduce an inner product on $\hat{\mathfrak{g}}$ from the inner product on $\mathfrak{g}_1$ in the following way. Let $\mathbb{B}(A,B) = [A, B]$ be the Lie bracket. First, give $\wedge^2 \mathfrak{g}_1$ an inner product such that such that if $A_1, \dots, A_{n_1}$ is an orthonormal basis of $\mathfrak{g}_1$, then $A_i \wedge A_j$ is an orthonormal basis for $1 \leq i \leq j \leq n$. Then we can then define an inner product on $\mathfrak{g}_2$ by requiring $\mathbb{B}$ to map the orthogonal complement of $\ker \mathbb{B} \cap \wedge^2 \mathfrak{g}_1$ onto $\mathfrak{g}_2$. Next, since $\mathfrak{g}_k$ is the image of $\{ A \wedge B \, : \, A \in \mathfrak{g}_j \wedge B \in \mathfrak{g}_{k-j}, j=1, \dots, k-1\}$ under $\mathbb{B}$, we can proceed iteratively in the same manner to define an inner product on $\mathfrak{g}_k$ for any $k = 3, \dots, s$. In conclusion, there is a unique way to extend the inner product to $\mathfrak{g}$ such that the map $\mathbb{B}|_{(\ker \mathbb{B})^\perp}: (\ker \mathbb{B})^\perp \to \mathfrak{g}_2 \oplus \cdots \oplus \mathfrak{g}_s$ is a linear isometry.

We can see $\mathfrak{g}_1 \oplus \cdots \oplus \mathfrak{g}_s$ as the image of a surjection from the truncated tensor algebra $T^s(\mathfrak{g}_1)$ by dividing out the additional relations. Induce the inner product from this surjection $P: T^s(\mathfrak{g}_1) \to \mathfrak{g}$ by requiring $P$ to map $(\ker P)^\perp$ onto $\mathfrak{g}$ isometrically.

Next, any element $D \in \mathfrak{g}_0$ is a map from $\mathfrak{g}$ to itself and can be considered as an element in $\mathfrak{g}^* \otimes \mathfrak{g}$, the latter having an induced inner product from the previous statement.
\item We finally write $\partial^*$ for the dual of $\partial$ with respect to the inner product on $\mathfrak{g}$.
\end{enumerate}
With these definitions in place, we have the following result \cite{Mor08}.
\begin{theorem}
There is a unique $(\hat{\mathfrak{g}},\mathfrak{g}_0)$-Cartan connection $\psi: T\mathscr{F} \to \hat{\mathfrak{g}}$ with curvature $\kappa: \scrF \to \wedge^2 \mathfrak{g}^* \otimes \hat{\mathfrak{g}}$ satisfying $\partial^* \kappa = 0.$
\end{theorem}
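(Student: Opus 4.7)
The plan is to construct $\psi$ by successive normalization, starting from an arbitrary initial Cartan connection and correcting it degree-by-degree. By the construction outlined in Section~\ref{sec:SRFrame}, any choice of strongly compatible affine connection $\nabla$ and $E$-grading $I$ on $(M,E,g)$ yields a $(\hat{\mathfrak{g}},\mathfrak{g}_0)$-Cartan connection $\psi_0 = (\omega_0, \theta_0)$ on $\scrF$, which we take as our seed. The space of all Cartan connections on $\scrF$ with values in $\hat{\mathfrak{g}}$ is then an affine space modeled on $G_0$-equivariant horizontal one-forms $\phi: T\scrF \to \hat{\mathfrak{g}}$, equivalently sections of the associated bundle $\scrF \times_{G_0}(\mathfrak{g}^* \otimes \hat{\mathfrak{g}})$.

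Next I would compute how the curvature transforms under $\psi \mapsto \psi + \phi$. A direct calculation from $d\psi + \tfrac{1}{2}[\psi,\psi] = \kappa(\psi,\psi)$ gives $\kappa_{\psi+\phi} = \kappa_\psi + \partial\phi + \tfrac{1}{2}[\phi,\phi] + (\text{horizontal derivatives of }\phi)$, where $\partial$ is precisely the fiberwise Spencer differential from the statement. I would then equip everything with the natural $\mathbb{Z}$-grading on $\hat{\mathfrak{g}}$ (with $\mathfrak{g}_0$ in degree $0$ and $\mathfrak{g}_k$ in degree $k$), extended to $\wedge^k \mathfrak{g}^* \otimes \hat{\mathfrak{g}}$. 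Since $\partial$ is homogeneous of degree zero while $[\cdot,\cdot]$ is strictly additive, the nonlinear and derivative terms at total degree $k$ depend only on components of $\phi$ of degree $< k$.

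Existence then proceeds by induction on degree. Using the algebraic Hodge decomposition
\[
\wedge^2 \mathfrak{g}^* \otimes \hat{\mathfrak{g}} = \ker \Box \oplus \operatorname{im} \partial \oplus \operatorname{im} \partial^*, \qquad \Box = \partial \partial^* + \partial^* \partial,
\]
in each degree, we restrict the correction $\phi_k$ to lie in the degree-$k$ part of $\operatorname{im} \partial^* \subset \mathfrak{g}^* \otimes \hat{\mathfrak{g}}$, where $\partial^* \partial$ is an isomorphism. Assuming inductively that $\partial^* \kappa$ vanishes in degrees $< k$, we solve uniquely $\partial^* \partial \phi_k = -\partial^*(\text{known obstruction at degree }k)$ to kill the degree-$k$ component of $\partial^* \kappa$. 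Since the grading on $\hat{\mathfrak{g}}$ has finite depth $s$, the process terminates in finitely many steps and yields a normalized $\psi$.

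Uniqueness follows by the same mechanism. If $\psi' = \psi + \phi$ also satisfies $\partial^* \kappa' = 0$, and we choose $\phi$ in the complement $\operatorname{im} \partial^*$ (using gauge freedom modulo $\ker \partial$), then the lowest nonzero degree $k$ component gives $\partial^* \partial \phi_k = 0$, forcing $\phi_k = 0$ by the same isomorphism property; iteration kills all of $\phi$. The main obstacle, in my view, is the bookkeeping for the horizontal-derivative terms in $\kappa_{\psi+\phi}$ (which are not purely algebraic): one must verify they shift degree predictably and so respect the filtered induction, and one must check $G_0$-equivariance of the fiberwise inner product, $\partial^*$, and the Hodge decomposition so that the pointwise algebraic choices glue to a global correction $\phi$ on $\scrF$.
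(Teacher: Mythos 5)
The paper does not prove this statement: it is quoted verbatim from Morimoto \cite{Mor08}, so there is no in-text proof to compare against. Your outline is the standard Tanaka--Morimoto normalization scheme (affine space of Cartan connections, curvature varying by $\partial\phi$ plus higher-order terms, induction on the grading degree using the algebraic Hodge decomposition for $\partial$, $\partial^*$), and that is indeed the strategy of the cited proof; you also correctly identify the derivative-term bookkeeping as the place where ``regularity'' of the structure must be invoked.

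Two spots are genuinely thin. First, your uniqueness argument is not quite right as written: if $\psi'=\psi+\phi$ are both normal, $\phi$ is whatever it is --- you have no freedom to ``choose $\phi$ in $\operatorname{im}\partial^*$.'' The correct argument takes the lowest degree $k$ with $\phi_k\neq 0$, deduces $\partial^*\partial\phi_k=0$ and hence $\partial\phi_k=0$ by positivity of $\langle\partial^*\partial\phi_k,\phi_k\rangle=\|\partial\phi_k\|^2$, and then must conclude $\phi_k=0$. That last step requires the vanishing of the positive-degree first Lie algebra cohomology of $\mathfrak{g}$ with values in $\hat{\mathfrak{g}}$ (equivalently, that the prolongation of $\mathfrak{g}_0=\isom(\mathfrak{g})$ is trivial, which holds here because elements of $\mathfrak{g}_0$ restrict to skew-symmetric maps on $\mathfrak{g}_1$); this algebraic input is absent from your sketch and is precisely what makes the metric case work. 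Second, your seed connection is built from a strongly compatible $\nabla$ and grading $I$, but in the paper's logic those objects are \emph{derived from} the Cartan connection, so you need an independent construction of some initial $\psi_0$ (choose complements $V^{k}$ and a $G_0$-compatible principal connection on $\scrF$, using that $G_0$ is compact); this is routine but should be said.
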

Following \cite{Gro20b}, we can use this result to define a canonical choice of grading $I$ and connection $\nabla$. Let us first introduce some definition,
\begin{enumerate}[$\bullet$]
\item By $I$ we can get a Carnot algebra structure on each $T_x M$. We can extend the sub-Riemannian metric $g$ to a Riemannian metric $g_I$, by using the argument in Section~\ref{sec:SRFrame} (ii).
\item Define a subbundle $\mathfrak{s}$ of $\End TM$ consisting of isometry algebras $\mathfrak{s}_x = \isom(T_xM)$ on each fiber.
\item Finally define $\chi: TM \to \wedge^2 TM$ by $\langle \chi(v), w_1 \wedge w_2 \rangle_{g_I} = - \langle v, \mathbb{T}(w_1, w_2) \rangle_{g_I}$. In other words, $\mathfrak{s}_x = I^{-1} \mathfrak{g}_0 I$.
\end{enumerate}
\begin{theorem} \label{th:MorimotoGC}
There is a unique grading $I$ and connection $\nabla$ that is strongly compatible with $(M, E,g,I)$ and whose torsion $T$ and curvature $R$ satisfies for any $D \in \mathfrak{s}$ and any $v \in V^{i}$, $w \in V^{j}$ with $0 \leq j < i \leq s$,
\begin{align} \label{Rcond}
\langle R(\chi(v)), D \rangle_{g_I} & = \langle T(v, \cdot), D \rangle_{g_I}, \\ \label{Tcond}
\langle T(\chi(v)), w \rangle_{g_I} & =   -  \langle T(v, \cdot) , \mathbb{T}(w, \cdot) \rangle_{g_I} .
\end{align}
\end{theorem}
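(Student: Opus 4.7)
The overall strategy is to invoke Morimoto's theorem to produce the canonical Cartan connection $\psi = (\omega, \theta)$ on $\scrF$ with $\partial^{*} \kappa = 0$, extract $(I, \nabla)$ from $\psi$ via the recipe of Section~\ref{sec:SRFrame}, and then show that the normalization $\partial^{*} \kappa = 0$ translates into precisely the pair \eqref{Rcond}--\eqref{Tcond}. Uniqueness is then automatic: any other strongly compatible $(I', \nabla')$ satisfying \eqref{Rcond}--\eqref{Tcond} will correspond, by reversing the translation, to a Cartan connection with coclosed curvature, so by Morimoto's uniqueness it must coincide with $\psi$.

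First I would carry out the construction. The $\mathfrak{g}$-valued component $\theta$ produces the grading isomorphism $I : TM \to \symb$ together with the complements $V^{k+1}$ to $E^{k}$ inside $E^{k+1}$, while $\omega$ is a principal $\mathfrak{g}_{0}$-connection on $\scrF$ inducing $\tilde\nabla$ on $\symb$ and hence $\nabla = I^{-1} \tilde\nabla I$ on $TM$. Strong compatibility with $(E, g, I)$ is automatic because parallel transport is a Carnot algebra isometry. Next I would express the curvature $\kappa = d\psi + \tfrac{1}{2}[\psi, \psi]$ in terms of $T$ and $R$: the $\mathfrak{g}_{0}$-component translates via $I$ into $R$ viewed as a section of $\wedge^{2} T^{*} M \otimes \mathfrak{s}$, while the $\mathfrak{g}$-component becomes $T - \mathbb{T}$, since the $\mathbb{T}$ contribution arises from the $[\theta, \theta]$ piece and $\mathbb{T} = -I^{-1} \Lbra I \cdot, I \cdot \Rbra$.

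The core of the argument is the computation of $\partial^{*}$. For a test form $\alpha \in \mathfrak{g}^{*} \otimes \hat{\mathfrak{g}}$,
\begin{equation*}
(\partial \alpha)(A, B) = [A, \alpha(B)] - [B, \alpha(A)] - \alpha([A, B]),
\end{equation*}
so the condition $\partial^{*} \kappa = 0$ is equivalent to $\langle \kappa, \partial \alpha \rangle = 0$ for every $\alpha$. Splitting $\alpha$ by codomain and transferring everything to $TM$ via $I$, test forms with values in $\mathfrak{s} \cong \mathfrak{g}_{0}$ pair $\kappa$ with the bracket $[A, B]$ in the third summand of $\partial\alpha$; the $g_{I}$-adjoint of this bracket is exactly $\chi$ by definition, and the pairing of $\kappa$ with the first two summands contributes $R$ and $T$ respectively, producing \eqref{Rcond}. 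Test forms with values in the grading subspaces $V^{k}$ probe the $\mathfrak{g}$-component of $\kappa$, and the same dualization via $g_{I}$ converts $[\cdot, \cdot]$ into $\mathbb{T}(w, \cdot)$ on the right-hand side, yielding \eqref{Tcond}. The degree restriction $0 \le j < i \le s$ reflects that only the positive-degree components of $\partial^{*} \kappa$ give non-trivial conditions, the degree-zero part being absorbed into $\mathbb{T}$.

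The main obstacle will be this last step: carefully tracking the grading through the adjoint computation and verifying that each bracket factor produces the map $\chi$ exactly as stated. The subtlety is that the inner product on $\mathfrak{g}_{2}, \dots, \mathfrak{g}_{s}$ was built iteratively from the bracket $\mathbb{B}$, so the adjoint of $\mathbb{B}$ on higher components is what introduces $\chi$ into the formulas; one must keep the bookkeeping of degrees straight so that the indices $i, j$ in \eqref{Rcond}--\eqref{Tcond} match the output of $\partial^{*}$. Once this identification is verified, uniqueness follows because any strongly compatible $(I', \nabla')$ meeting \eqref{Rcond}--\eqref{Tcond} reverses the translation to give a Cartan connection with $\partial^{*} \kappa' = 0$, hence equal to $\psi$ and determining the same $(I, \nabla)$.
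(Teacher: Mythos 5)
Your proposal follows the same route the paper intends: Theorem~\ref{th:MorimotoGC} is obtained by applying Morimoto's existence--uniqueness theorem for the Cartan connection with $\partial^{*}\kappa = 0$ and then translating that normalization through the dictionary of Section~\ref{sec:SRFrame} between Cartan connections $\psi = (\omega,\theta)$ on $\scrF$ and strongly compatible pairs $(I,\nabla)$, which is exactly how the paper (deferring the computation to \cite{Gro20b}) presents it. The only point to keep in view when filling in the details is that the uniqueness argument requires the dictionary to be a genuine bijection, i.e.\ that every strongly compatible pair $(I',\nabla')$ does arise from some Cartan connection on $\scrF$, which you assert when ``reversing the translation'' and which must be verified alongside the $\partial^{*}$ bookkeeping.
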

We can now state the full flatness theorem.
\begin{theorem}
Let $(M,E,g)$ be a sub-Riemannian manifold with constant symbol and let $I$ and $\nabla$ be the canonical grading and connections. Then $(M,E,g)$ is locally isometric to a Carnot group if and only if the curvature and torsion of $\nabla$ satisfies \eqref{flat}.
\end{theorem}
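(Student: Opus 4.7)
$(\Leftarrow)$ This direction is an immediate consequence of the Partial flatness theorem stated above: the canonical grading and connection are strongly compatible with $(E, g, I)$ by construction, and adding the hypothesis $R = 0$, $T = \mathbb{T}$ places us exactly in the situation of that theorem, so local isometry to a Carnot group follows without further work.

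$(\Rightarrow)$ The plan is to use uniqueness of the canonical Cartan connection to reduce the problem to the model Carnot group. Since the canonical grading $I$ and connection $\nabla$ are intrinsically determined by the sub-Riemannian data through the condition $\partial^*\kappa = 0$ of Morimoto's theorem, any local isometry between two sub-Riemannian manifolds with constant symbol $\mathfrak{g}$ must send the canonical Cartan connection on one to the canonical Cartan connection on the other. Consequently, the tensors $R$ and $T - \mathbb{T}$ are pointwise local invariants, so if $(M, E, g)$ is locally isometric to a Carnot group $G$ with the same symbol, it suffices to verify $R = 0$ and $T = \mathbb{T}$ on $G$ itself.

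On the Carnot group $G$, the sub-Riemannian frame bundle $\scrF$ can be identified with the full isometry group $\hat G$ described in Section~\ref{sec:SRFrame}, via the map sending $(\varphi, a) \in \hat G$ to the frame at $a$ obtained by composing the Carnot isometry $\varphi$ of $\mathfrak{g}$ with left translation by $a$. Under this identification, the Maurer--Cartan form of $\hat G$ becomes a $\hat{\mathfrak{g}}$-valued one-form $\psi_0$ on $\scrF$ satisfying $d\psi_0 + \frac{1}{2}[\psi_0, \psi_0] = 0$, so its curvature vanishes: $\kappa_0 \equiv 0$. A fortiori $\partial^* \kappa_0 = 0$, and Morimoto's theorem forces $\psi_0$ to be \emph{the} canonical Cartan connection. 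Following the recipe of Section~\ref{sec:SRFrame}, the induced grading $I_0$ is the one given fiberwise by left translation from the stratification $\mathfrak{g} = \mathfrak{g}_1 \oplus \cdots \oplus \mathfrak{g}_s$, and the induced connection $\nabla^0$ is the unique affine connection for which all left-invariant vector fields are parallel. For this $\nabla^0$ curvature vanishes trivially, so $R = 0$, and for any left-invariant $X, Y$ on $G$ we compute $T(X,Y) = \nabla^0_X Y - \nabla^0_Y X - [X,Y] = -[X,Y]$, which by the definition of $I_0$ is exactly $-I_0^{-1} \Lbra I_0 X, I_0 Y \Rbra = \mathbb{T}(X,Y)$.

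The main obstacle is verifying carefully the bridge between Morimoto's Cartan-connection formulation and the grading-plus-connection formulation of Theorem~\ref{th:MorimotoGC}: namely, that the passage $\psi \mapsto (I, \nabla)$ from Section~\ref{sec:SRFrame} preserves uniqueness, so that the pair $(I_0, \nabla^0)$ extracted from $\psi_0$ above indeed coincides with the canonical pair singled out by \eqref{Rcond} and \eqref{Tcond}. Once this compatibility is established, the invariance of $R$ and $T - \mathbb{T}$ under local isometries, combined with the computation on $G$, completes the proof.
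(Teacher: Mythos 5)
The paper does not actually prove this theorem --- it is stated without proof, with the details deferred to \cite{Gro20b} --- so I am assessing your argument on its own terms. Your $(\Leftarrow)$ direction is exactly right: strong compatibility of the canonical pair is part of Theorem~\ref{th:MorimotoGC}, and \eqref{flat} then puts you directly in the hypotheses of the Partial flatness theorem. Your $(\Rightarrow)$ direction also follows the natural route (naturality of the canonical structure under isometries, then an explicit computation on the model), but you have left a genuine hole where you flag ``the main obstacle'': you identify the canonical pair on the Carnot group by passing through Morimoto's Cartan connection and the condition $\partial^*\kappa=0$, which forces you to verify that the correspondence $\psi\mapsto(I,\nabla)$ carries $\partial^*\kappa=0$ to \eqref{Rcond}--\eqref{Tcond}, and you do not do this.

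Within the paper's own framework the detour is unnecessary, and the gap closes easily. Theorem~\ref{th:MorimotoGC} characterizes the canonical pair intrinsically by \eqref{Rcond} and \eqref{Tcond}, so on the model group $G$ you need only check those two identities directly for the left-invariant grading $I_0$ and the left-invariant connection $\nabla^0$. Since $R=0$ and $T=\mathbb{T}$ there, everything vanishes for degree reasons: in \eqref{Rcond} the left side is zero and the right side pairs $T(v,\cdot)=\mathbb{T}(v,\cdot)$, which shifts the grading by $i\geq 1$, against $D\in\mathfrak{s}$, which preserves it, and distinct degrees are $g_I$-orthogonal in $\End TM$; in \eqref{Tcond}, $\chi(v)$ for $v\in V^i$ lies in $\sum_{a+b=i}V^a\wedge V^b$, so $T(\chi(v))=\mathbb{T}(\chi(v))\in V^i$ is orthogonal to $w\in V^j$ with $j<i$, while the right side pairs a degree-$i$ map against a degree-$j$ map and likewise vanishes. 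Hence $(I_0,\nabla^0)$ is the canonical pair on $G$, it satisfies \eqref{flat}, and uniqueness of the canonical pair (applied locally) transports this conclusion back to $M$ along the local isometry. With that substitution your proof is complete; your sign check $T(X,Y)=-[X,Y]=\mathbb{T}(X,Y)$ for left-invariant fields is correct because the stratification makes the full torsion coincide with its degree-zero part on $G$.
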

We will look at the canonical grading and curvature in three concrete examples.

\section{Example: Engel-type sub-Riemannian manifolds} \label{sec:Engel}
Let us consider the Engel algebra given by $\mathfrak{g} = \mathfrak{g}_1 \oplus \mathfrak{g}_2 \oplus \mathfrak{g}_3 = \spn \{ A_1, A_2\} \oplus \spn \{ B\} \oplus \spn \{ C\}$ with identities
$$[A_1, A_2] = B, \qquad [A_1, B] = C.$$
We can make it a Carnot algebra by making $A_1$ and $A_2$ into an orthonormal basis. We remark that the isometry algebra for the Engel algebra satisfies $\mathfrak{g}_0 = 0$.

Let $(M, E, g)$ be a sub-Riemannian manifold with growth vector $(2,3,4)$. Then the only possible symbol is the Engel algebra. Since $\mathfrak{s} =0$, we have no condition in \eqref{Rcond} and any strongly compatible connection $\nabla$ will be flat. Consider the symbol at every point
$$\symb = E \oplus E^2/E \oplus E^3/E^2.$$
We note that the map
$$E \otimes E^2/E \to E^3/E^2, \qquad (X_x,Y_x \bmod E) \mapsto \Lbra X_x, Y_x \bmod E \Rbra = [X,Y]_x \bmod E^2,$$
has a one-dimensional kernel $E[0]\subseteq E$. Let $E[1]$ be the orthogonal complement of $E[0]$ in $E$.

Working locally, we can choose a local orthonormal vector field $X_1$ in $E[1]$ and $X_0$ in $E[0]$. Then there is a unique local $\Ann E^2$ section of $\psi$, such that
$$d\psi(X_1, [X_0, X_1]) = -1.$$
Next, define $\theta = -d\psi(X_1, \cdot)$. Then there are unique vector fields $Z$ and $Y$ such that
$$\theta(Z) =0, \qquad \theta(Z)=1, \qquad d\theta(Z, \cdot)|_E = 0,$$
$$\psi(Y) =1, \qquad \psi(Y)=0, \qquad d\theta(Y, \cdot)|_E = 0.$$
From these definitions, it follows that $[X_0,X_1] = Z \mod E$ and that $[X_1,[X_0, X_1]] = Y \mod E^2$. We now have the following result
\begin{theorem} \label{th:Engel}
Assume that
$$[X_0, X_1] = Z + c_0 X_0 + c_1 X_1,$$
Define $X_2 = Z + \frac{2}{5} c_0 X_0 + \frac{1}{2} c_1 X_1$, and if
$$[X_1, X_2] = Y + C_0 X_0 + C_1 X_1 + C_2 Z,$$
we define
\begin{align*}
X_3 & = Y - \frac{1}{5} c_0 Z + \frac{1}{2} \left( C_0 - \frac{1}{5} X_1 c_0 + \frac{3}{25} c_0^2 \right) X_0 \\
& \qquad + \frac{1}{2} \left( C_1 + \frac{1}{5} X_0 c_0 + \psi([X_2 - \tfrac{1}{5} c_0 X_0, Y]) + \frac{1}{10} c_1 c_0 \right)
\end{align*}
Then the canonical grading $I$ is given by $E = V_1$,
$$V^2 = \spn\{ X_2 \}, \qquad V^3 = \spn \{ X_3 \},$$
Finally, the canonical connection $\nabla$ is the unique connection making $X_0$, $X_1$, $X_2$, $X_3$ parallel.
\end{theorem}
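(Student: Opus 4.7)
The plan is to exploit the fact that for the Engel symbol the isometry algebra $\mathfrak{g}_0$ vanishes, so Theorem~\ref{th:MorimotoGC} simplifies drastically and the statement reduces to a finite-dimensional normal form computation. Since $\mathfrak{g}_0 = 0$, the bundle $\mathfrak{s} \subset \End TM$ is zero, condition \eqref{Rcond} is vacuous, and the connection form $\omega$ on $\scrF$ vanishes identically. Strong compatibility therefore reduces to demanding that a chosen adapted frame be $\nabla$-parallel, so the whole problem becomes: find the unique adapted frame whose resulting torsion satisfies the single remaining condition \eqref{Tcond}.

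First I would establish the canonical nature of $X_0, X_1, \psi, \theta, Z, Y$ already built in the preamble to the theorem. The line bundle $E[0] \subset E$ is the kernel of the intrinsic bracket map $E \otimes E^2/E \to E^3/E^2$, so $X_0$ and $X_1$ are determined up to sign. The 1-form $\psi \in \Ann E^2$ is fixed by the normalization $d\psi(X_1,[X_0,X_1]) = -1$, and from $\theta = -d\psi(X_1,\cdot)$ the vector fields $Z$ and $Y$ are uniquely pinned down by the three scalar conditions imposed on each. These choices give canonical initial representatives of the filtration classes $E^2/E$ and $TM/E^2$, but only modulo the freedom of adding lower-degree terms.

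That remaining freedom in the grading $I$ is precisely the choice of complements $V^2 \subset E^2$ and $V^3 \subset TM$, amounting to shifting $Z$ by an element of $E$ to produce $X_2$ and shifting $Y$ by an element of $E^2$ to produce $X_3$. Writing $X_2 = Z + \alpha X_0 + \beta X_1$ and $X_3 = Y + \gamma Z + \delta X_0 + \varepsilon X_1$ with unknown coefficients, and expanding the brackets $[X_i,X_j]$ using the structure constants $c_0, c_1, C_0, C_1, C_2$, the torsion of $\nabla$ (which equals $-[X_i,X_j]$ on parallel frame vectors) can be read off directly. On the Engel model $\mathfrak{g}$, the tensor $\mathbb{T}$ and the map $\chi$ are given by the bracket relations $[A_1,A_2]=B$, $[A_1,B]=C$, so condition \eqref{Tcond} becomes an explicit linear system in $\alpha, \beta, \gamma, \delta, \varepsilon$, with one scalar equation for each pair $(v,w)$ of frame vectors satisfying $\deg v > \deg w$. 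The system is triangular with respect to the grading: the grade-2 equations determine $\alpha, \beta$, after which the grade-3 equations determine $\gamma, \delta, \varepsilon$.

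Solving this triangular system yields exactly the coefficients displayed in the statement, and the uniqueness part of Theorem~\ref{th:MorimotoGC} then identifies this frame with the canonical one. The main obstacle is the nonlinear feedback of the grade-2 correction into the grade-3 equations: after replacing $Z$ by $X_2 = Z + \tfrac{2}{5}c_0 X_0 + \tfrac{1}{2}c_1 X_1$, the new bracket $[X_1,X_2]$ picks up derivative terms in $X_1 c_0, X_1 c_1$ and quadratic terms in $c_0 c_1, c_0^2$ through the commutator with the lower-order correction. Tracking these carefully---especially the composite expression $-\tfrac{1}{5} X_1 c_0 + \tfrac{3}{25} c_0^2$ that appears in the formula for $X_3$---is the delicate bookkeeping step, but it is mechanical once the Spencer pairing on the Engel Carnot algebra is written out.
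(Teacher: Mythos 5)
Your proposal follows essentially the same route as the paper: since $\mathfrak{g}_0=0$ the normalization \eqref{Rcond} is vacuous, the canonical connection is the one making the adapted frame parallel so that $T(X_i,X_j)=-[X_i,X_j]$, and the grading is pinned down by writing $X_2 = Z + (\text{terms in } E)$ and $X_3 = Y + (\text{terms in } E^2)$ and imposing \eqref{Tcond} degree pair by degree pair. One small correction to your plan: the system is \emph{not} triangular in the grading as you claim --- in the paper's computation the degree-$(2,1)$ equations determine the $X_1$-component of the correction to $Z$ but only yield a relation coupling its $X_0$-component to the coefficient $\varphi$ of $Z$ in $X_3$ (because $\langle X_2,\cdot\rangle_{g_I}=\theta+\varphi\psi$ feeds $\varphi$ into the grade-$2$ pairing), and it is the degree-$(3,2)$ equation $-2\varphi=\langle W_1,X_0\rangle$ that closes the system and produces the coefficients $\tfrac{2}{5}c_0$ and $-\tfrac{1}{5}c_0$; solving the grade-$2$ block on its own, as you propose, would leave the $X_0$-component of $X_2$ undetermined.
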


\begin{remark}[Complexity of Morimoto's connection]
One observation we can make from Theorem~\ref{th:Engel} is that the connection and grading given above does not seem like the most simple solution. In fact, it would seam like a more natural choice to considerer $V^2$ and $V^3$ spanned by respectively $Z$ and $Y$. We run into similar problems in other examples, and for this reason, gradings and connections presented in Section~\ref{sec:Contact} and Section~\ref{sec:235} will actually be simplified alternatives to those defined by Morimoto's result. The apparent complexity of Cartan connections satisfying Morimoto's normalization conditions has previously also been observed in \cite{AMS19}. These observations could indicate that there exists an alternative normalization condition for Cartan connections on sub-Riemannian manifolds that would be preferable for practical computations.
\end{remark}

\begin{proof}[Proof of Theorem~\ref{th:Engel}]
Let $I$ be the canonical grading. Relative to $I$, define $X_2$ and $X_3$ by,
$$X_2 = \pr_{V_2} [X_0, X_1], \qquad X_3 = \pr_{V^3} [X_1, X_2].$$
Then $X_0$, $X_1$, $X_2$ and $X_3$ form an orthonormal basis of $g_I$.
We see that all vector fields $X_j$ are parallel, so we have that
$$T(X_i, X_j) = - [X_i, X_j].$$
Furthermore, these are orthonormal with respect to $g_I$. We also see that
$$\chi(X_2) = X_0 \wedge X_1, \qquad \chi(X_3) = X_1 \wedge X_2.$$

We can define
$$X_2 = Z - W_1, \qquad \text{ and } \qquad X_3 = Y - \varphi Z - W_2,$$
for vector fields $W_1$, $W_2$ with values in $E$.  Define a form $\alpha_W$ with $W \in \Gamma(E)$ by
$$\alpha_W(Y)  =0, \qquad \alpha_Y(Z) = 0, \qquad \alpha_W(X) = \langle W, X \rangle_g, \quad X \in \Gamma(E).$$
We can then describe $g_I$ such that for $X \in \Gamma(E)$,
$$\langle X, \, \cdot \, \rangle_I = \alpha_X + \alpha_{W_1}(X) \theta + \alpha_{W_2}(X) \psi, \qquad \langle X_2, \, \cdot \, \rangle_I = \theta + \varphi \psi, \qquad \langle X_3, \, \cdot \, \rangle_I = \psi.$$

Using the above notation, we can then look \eqref{Tcond} for all relevant cases:
We first consider degree $2$ and $1$, giving us
\begin{align*}
- \langle T(\chi(X_2)), X_0 \rangle_I & =  \langle [X_0, X_1], X_0 \rangle_I  = \alpha_{X_0}([X_0, X_1]) + \langle X_0, W_1 \rangle \\
& =  \langle T(X_2, X_1), \mathbb{T}(X_0, X_1) \rangle_I   = \langle [X_1, X_2], X_2 \rangle_I  \\
&  = \theta([X_1, X_2]) + \varphi    = - \langle W_1, X_0 \rangle + \varphi, \end{align*}
and
\begin{align*}
-\langle T(\chi(X_2)), X_1 \rangle_I & = \langle [X_0, X_1], X_1 \rangle_I = \alpha_{X_1}([X_0, X_1]) + \langle X_1, W_1 \rangle  \\
& = \langle T(X_2, X_0), \mathbb{T}(X_1, X_0) \rangle_I = \langle [X_0, X_2], X_2 \rangle_I  = \theta( [X_0, X_2] )   = - \langle W_1, X_1 \rangle.
\end{align*}
Putting these results together, we have
\begin{align*}
[X_0,X_1] & = Z - 2 W_1 + \varphi X_0
\end{align*}

If we next consider degree $3$ and $2$, then
\begin{align*}
- \langle T(\chi(X_3)), X_2 \rangle_I & = \langle [X_1, X_2], X_2 \rangle_I =\theta([X_1, X_2]) + \varphi \psi([X_1, X_2])  = \langle X_0, W_1 \rangle + \varphi \\
&  =  \langle T(X_3, X_1), \mathbb{T}(X_2, X_1) \rangle_I  =  \langle [X_1, X_3], X_3 \rangle_I   \\
& = \psi([X_1, X_3]) = - d\psi(X_1, X_3) = \theta(X_3) =- \varphi.
\end{align*}
giving us that
$$-2 \varphi =  \langle W_1, X_0 \rangle.$$
Summarizing we have
$$\varphi = - \frac{1}{2} \langle W_1, X_0 \rangle = \frac{1}{5} c_0, \qquad  \langle W_1, X_1 \rangle = -\frac{1}{2} c_1, \qquad X_2 = Z + \frac{2}{5} c_0 X_0 + \frac{1}{2} c_1 X_1.$$

Finally, using degree $3$ and $1$, we obtain
\begin{align*}
-\langle T(\chi(X_3)), X_0 \rangle_I & = \langle [X_1, X_2], X_0 \rangle = \alpha_{X_0}([X_1, X_2]) + \langle W_1, X_0 \rangle \theta([X_1, X_2]) + \langle W_2, X_0 \rangle  \\
& = \alpha_{X_0}([ X_1, X_2]) + \langle W_1, X_0 \rangle^2 + \langle W_2, X_0 \rangle\\
& = \langle T(X_3, X_1), \mathbb{T}(X_0, X_1) \rangle_I = - \langle [X_1, X_3], X_2 \rangle_I  \\
& = - \theta( [X_1, X_3]) - \varphi \psi([X_1,X_3]) \\
&= X_1 \varphi - \langle W_2, X_0 \rangle  + \varphi^2
\end{align*}
and
\begin{align*}
- \langle T(\chi(X_3)), X_1 \rangle_I & =  \langle [X_1, X_2], X_1 \rangle = \alpha_{X_1}([X_1,X_2]) + \langle W_1, X_1 \rangle \langle W_1, X_0 \rangle + \langle W_2, X_1 \rangle \\
&  = \langle T(X_3, X_0),\mathbb{T}(X_1, X_0) \rangle_I + \langle T(X_3, X_2), \mathbb{T}(X_1, X_2) \rangle_I \\
& = \langle [X_0, X_3],X_2 \rangle_I - \langle [X_2, X_3], X_3 \rangle_I\\
& = \theta([X_0, X_3]) + \varphi \psi([X_0, X_3]) - \psi([X_2, X_3]) \\
& = - X_0 \varphi -\langle W_2, X_1 \rangle + \varphi \psi([X_0, Y])   \\
& \qquad - \psi([X_2, Y ]) - \varphi \langle W_1, X_1 \rangle  - \langle X_1, W_2 \rangle.
\end{align*}
From these we summarize that
\begin{align*}
2\langle W_2, X_0 \rangle & = - \alpha_{X_0}([ X_1, X_2]) - \langle W_1, X_0 \rangle^2  + X_1 \varphi + \varphi^2 \\
& = - \alpha_{X_0}([ X_1, X_2])   + \frac{1}{5} X_1 c_0 - \frac{3}{25} c_0^2
\end{align*}
and 
\begin{align*}
 2\langle W_2, X_1 \rangle 
& = -  X_0 \varphi - \psi([X_2 - \varphi X_0, Y ]) - \varphi \langle W_1, X_1 \rangle \\
& \qquad   - \alpha_{X_1}([X_1,X_2]) - \langle W_1, X_1 \rangle \langle W_1, X_0 \rangle \\
& = - \frac{1}{5} X_0 c_0 - \psi([X_2 - \varphi X_0, Y ]) + \frac{1}{10} c_0 c_1 \\
& \qquad   - \alpha_{X_1}([X_1,X_2]) - \frac{1}{5} c_1 c_0 \\
\end{align*}
Combining these equations, the result follows.
\end{proof}

\section{Example: Contact manifolds} \label{sec:Contact}
\subsection{The Heisenberg algebra} \label{sec:Heisenberg}
As in Example~\ref{ex:nHeis}, the $n$-th Heisenberg algebra is the step~$2$ nilpotent algebra $\mathfrak{h}_n = \mathfrak{g}_{1} \oplus \mathfrak{g}_{2}$ where
$$\mathfrak{g}_{2} = \spn \{Z\}, \qquad \mathfrak{g}_{1} = \spn \{ X_1, \dots, X_n, Y_1, \dots, Y_n \},$$
and with the only non-zero brackets being
$$[X_j, Y_j] = Z, \qquad j =1, 2 \dots, n.$$
For any vector $\lambda = (\lambda_1, \dots, \lambda_n) \in \mathbb{R}^n$ such that
$1=\lambda_1 \leq \lambda_2 \leq \cdots \leq \lambda_n,$
we define
$$\langle X_j , X_j \rangle_{\mathfrak{g}_1} = \langle Y_j , Y_j \rangle_{\mathfrak{g}_1} =\lambda_j^2.$$
We write this Carnot algebra as $\mathfrak{h}_n(\lambda)$. All Carnot algebra structures on $\mathfrak{h}_n$ are isometric to one of these structures. The isometry algebra $\mathfrak{g}_0 = \isom(\mathfrak{h}_n(\lambda))$ is given by
$$\mathfrak{g}_0 = \spn \{ D_{ij} \, : \, i < j , \lambda_i = \lambda_j\} \cup \{ Q_{ij} \, : \, i \leq j , \lambda_i = \lambda_j\}.$$
\begin{align*}
D_{ij}( X_k) & = \delta_{ki} X_j - \delta_{kj} X_i, &  Q_{ij}(X_k) & =  \frac{1}{2} \delta_{ki} T_j + \frac{1}{2} \delta_{kj} Y_i, \\
D_{ij}( Y_k) & = \delta_{ki} Y_j - \delta_{kj} Y_i, &  Q_{ij}(Y_k) & =  -\frac{1}{2} \delta_{ki} X_j - \frac{1}{2} \delta_{kj} X_i,\\
D_{ij} Z &= 0, &  Q_{ij} Z &= 0.\end{align*}

\subsection{Contact manifolds with constant symbols}  
Let $(M, E, g)$ be a sub-Riemannian manifold of dimension $2n+1$ assume that $E$ has rank $2n$. We assume that~$E$ is a contact distribution, that is $X \wedge Y \mapsto \Lbra X, Y \Rbra = [X,Y] \mod E$ is non-degenerate.

Working locally, we can assume that $E = \ker \theta$ for a one-form $\theta$. We normalize $\theta$ by requiring that the maximal imaginary part of the eigenvalues of $d\theta$ is 1. We can then write
$$d\theta(v,w) = \langle v, \Lambda^{-1} J w \rangle_g, \qquad v,w \in E, J^2 = -\id_E.$$
where $\Lambda|_x$ is symmetric on $E$ and has eigenvalues $1 \leq \lambda_{1,x} \leq \cdots \leq \lambda_{x,n}$, each appearing twice. The symbol of $(M,E,g)$ at each point $\mathfrak{h}_n(\lambda_x)$. Hence $(M,E,g)$ only has constant symbol if $\lambda_x = \lambda$ is constant.

Let $1 =\lambda[1] < \lambda[2] < \dots < \lambda[k]$ be the eigenvalues without repetition, with corresponding eigenspace decomposition $E = E[1] \oplus \cdots \oplus E[k]$. Let $\pr[1], \dots, \pr[k]$ be the corresponding projections. We define the Reeb vector field~$Z$ such that
$$\theta(Z) = 1, \qquad d\theta(Z, \cdot) = 0.$$
We define $I$ such that $V^1\oplus V^2 = E \oplus \spn\{Z\}$ with $g_I$ defined such that $Z$ is a unit vector field. We define a tensor $\tau$ and connections $\nabla, \nabla'$ such that for any $Y, Y_1, Y_2\in \Gamma(E)$, $X \in \Gamma(TM)$,
\begin{align*}
\langle \tau_X Y_1, Y_2 \rangle & = \frac{1}{2} \sum_{j=1}^k (\calL_{X- \pr[j]X} g_I)( \pr[j] Y_1, \pr[j] Y_2), \\
\nabla Z & = \nabla' Z =0, \\
\nabla_X Y &= \sum_{j=1}^k \pr[j] \nabla_{\pr[j]X}^{g_I} \pr[j] Y + \sum_{j=1}^k \pr[j] [ X -\pr[j] X,  \pr[j] Y] + \tau_{X} Y, \\
\nabla_X' Y &= \nabla_{X} Y + \frac{1}{2} (\nabla_{X}J)JY, 
\end{align*}
We then have the following result.
\begin{theorem}
For $I$ and $\nabla'$ as above, $\nabla'$ is strongly compatible with $(M,E,g,I)$ for any contact manifold with constant symbol. It is locally isometric to a Carnot group if and only if $\nabla'$ has curvature $R' = 0$ and torsion $T' = d\theta \otimes Z$. 
\end{theorem}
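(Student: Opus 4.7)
My plan is to prove the theorem in two stages: first establish that $(I, \nabla')$ is strongly compatible with the sub-Riemannian structure, and then derive the flatness statement from the Partial Flatness Theorem together with a direct computation on the Carnot model.

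For strong compatibility I would verify the three axioms directly from the formulas. Preservation of the grading $V^1 = E$, $V^2 = \spn\{Z\}$ under $\nabla'$ is immediate: every term in $\nabla_X Y$ is $E$-valued when $Y \in \Gamma(E)$, the correction $\frac{1}{2}(\nabla_X J)JY$ preserves $E$ because $J$ does, and $\nabla' Z = 0$ is built in. Compatibility of $\nabla$ with $g$ on $E$ reduces to the Lie-derivative identity $(\calL_W g_I)(A,B) = W\langle A,B\rangle - \langle [W,A], B\rangle - \langle A,[W,B]\rangle$: the Levi-Civita piece already preserves $g_I$ on each eigenspace, and the tensor $\tau_X$ is engineered precisely to cancel the non-metric part of the bracket term $\pr[j][X-\pr[j]X, \pr[j]Y]$. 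The passage from $\nabla$ to $\nabla'$ preserves $g$-compatibility because the correction $\frac{1}{2}(\nabla_X J)J$ is $g$-skew; indeed, expanding with $J^2 = -\id$ and orthogonality of $J$ yields $\langle (\nabla_X J)JY_1, Y_2\rangle + \langle Y_1, (\nabla_X J)JY_2\rangle = -X\langle Y_1, Y_2\rangle + X\langle JY_1, JY_2\rangle = 0$.

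The substantive step is $\nabla'\mathbb{T} = 0$. Using $\theta|_E = 0$ one gets $[X,Y]\equiv -d\theta(X,Y)Z \pmod{E}$ for $X,Y\in\Gamma(E)$, so under the identification induced by $I$ one finds $\mathbb{T} = d\theta\otimes Z$. Since $\nabla' Z = 0$, parallelism of $\mathbb{T}$ is equivalent to $\nabla' d\theta = 0$; via $d\theta(\cdot,\cdot) = \langle \cdot, \Lambda^{-1}J\cdot\rangle_g$ and $g$-compatibility, this reduces to parallelism of the endomorphism $\Lambda^{-1}J$ on $E$. The identity $\nabla' J = 0$ is the whole point of the $J$-symmetrization construction and follows by a direct expansion using $J^2 = -\id$. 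It then remains to establish $\nabla'\Lambda = 0$, equivalently that each $J$-invariant eigenbundle $E[j]$ is $\nabla'$-parallel, which I would read off from the block structure of the defining formula: each summand in $\nabla_X Y$ factors through matching $\pr[j]$ projectors, and the $J$-correction respects the decomposition because $J$ commutes with every $\pr[j]$. I expect this block-by-block verification to be the main technical obstacle, since it involves careful tracking of how $\tau_X$ and the bracket term interact across different eigenspaces.

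Once strong compatibility is in hand, the reverse direction of the flatness statement is immediate from the Partial Flatness Theorem applied with $T' = d\theta\otimes Z = \mathbb{T}$ and $R' = 0$. For the forward direction, the key observation is that both $I$ and $\nabla'$ are determined by the sub-Riemannian data $(E,g)$ through explicit universal formulas, so any sub-Riemannian isometry intertwines them. It therefore suffices to verify $R' = 0$ and $T' = d\theta\otimes Z$ on the Carnot model $\mathfrak{h}_n(\lambda)$ itself. In the left-invariant orthonormal frame $\{X_j, Y_j, Z\}$ the Koszul formula together with the definitions yields that each frame vector is $\nabla'$-parallel, so $R' = 0$, and that $T'(X_i, Y_j) = -[X_i, Y_j] = -\delta_{ij}Z = d\theta(X_i, Y_j)Z$, which is exactly $(d\theta\otimes Z)(X_i, Y_j)$ under the prescribed normalization of $\theta$. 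This closes the argument.
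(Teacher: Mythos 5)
The paper does not actually prove this theorem; it states it and defers all details to \cite{Gro20b}, so there is no in-text argument to compare yours against line by line. Your strategy is the right one and, as far as the outline goes, it is correct: verify the three axioms of strong compatibility directly from the defining formulas, get the ``if'' direction from the Partial Flatness Theorem with $\mathbb{T}=d\theta\otimes Z$, and get the ``only if'' direction from naturality of the construction plus an explicit computation on $\mathfrak{h}_n(\lambda)$, where all left-invariant frame fields are indeed $\nabla'$-parallel (the Koszul terms vanish because brackets of horizontal fields are central and orthogonal to $E$, the bracket and $\tau$ terms vanish for the same reason, and $T'(U,V)=-[U,V]=d\theta(U,V)Z$ with the convention $d\theta(U,V)=-\theta([U,V])$). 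Two points deserve to be made explicit rather than left implicit. First, the parallelism of each eigenbundle $E[j]$ under $\nabla'$ uses not only that every term of $\nabla_X Y$ factors through $\pr[j]$ applied to $Y$, but also that $J$ commutes with each $\pr[j]$; this holds because $\Lambda$ and $J$ arise as the polar decomposition of the $g$-dual of $d\theta$, so they commute and the $E[j]$ are $J$-invariant (consistent with each eigenvalue appearing twice), and it is exactly what makes $\nabla'(\Lambda^{-1}J)=0$ and hence $\nabla'\mathbb{T}=0$ go through. Second, the naturality argument for the ``only if'' direction needs the observation that $\theta$ is determined only up to sign, so $Z$ and $J$ each flip sign under the other choice; one should check that $d\theta\otimes Z$, the tensor $\tau$, and the correction $\tfrac12(\nabla_X J)J$ are all invariant under this sign change, which they are. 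With those two verifications written out, your argument is complete.
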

We remark that $\nabla'$ is not the connection of Theorem~\ref{th:MorimotoGC}, but can still be used for flatness theorems of contact manifolds. See \cite{Gro20b} for details.

\section{Example: (2,3,5)-manifolds} \label{sec:235}
We consider finally a sub-Riemannian manifold $(M, E,g)$ with growth vector $\mathfrak{G} = (2,3,5)$. Then there is only one Carnot algebra with this growth vector, namely the free nilpotent Lie algebra $\mathfrak{g} =\free_3(\mathbb{R}^2)$ on a two-dimensional vector space and of step 3 as introduced in Example~\ref{ex:Free}. This can be considered as a Lie algebra
$$\mathfrak{g} =\mathfrak{g}_1 \oplus \mathfrak{g}_2 \oplus \mathfrak{g}_3 = \spn \{ A_1, A_2\} \oplus \spn \{ B\} \oplus \{ C_1, C_2\},$$
with brackets
$$[A_1, A_2] = B, \qquad [A_j, B] = C_j,$$
and with $A_1, A_2$ being an orthonormal basis. Computing the canonical grading and connection in this case becomes very complicated and we refer to \cite{Gro20b}. We can however give the following explicit result for when such a manifold is locally isometric to a Carnot group.

\begin{theorem}[Flatness theorem for $(2,3,5)$-manifolds] \label{th:Flat235} Let $(M,E,g)$ be a sub-Riemannian manifold where $E$ has growth vector $(2,3,5)$.
Let $X_1$, $X_2$ be any local orthonormal basis of $E$. Introduce a basis $X_3 = [X_1, X_2]$, $X_4 = [X_1, X_3]$ and $X_5 = [X_2, X_3]$ with $[X_i, X_j] = \sum_{k=1}^5 c_{ij}^k X_k$. Define vector fields $Z$, $Y_1$ and $Y_2$ by
\begin{align*}
Z & = X_3 + ( c_{23}^3+c_{24}^4 + c_{25}^5) X_1 - ( c_{13}^3+c_{14}^4 + c_{15}^5) X_2, \\
Y_1 & = X_4 - (c_{14}^4 + c_{15}^5) X_3 + (c_{24}^3 - X_2(c_{14}^4 + c_{15}^5) + c_{24}^4 (c_{14}^4 + c_{15}^5) + c_{24}^5(c_{24}^4 + c_{25}^5)) X_1\\
& \qquad -  (c_{14}^3 - X_1(c_{14}^4 + c_{15}^5) + c_{14}^4 (c_{14}^4 + c_{15}^5) + c_{14}^5(c_{24}^4 + c_{25}^5)) X_2, \\
Y_2 & = X_5 - (c_{24}^4 + c_{25}^5) X_3 + (c_{25}^3 - X_2(c_{25}^4 + c_{25}^5) + c_{25}^4 (c_{14}^4 + c_{15}^5) + c_{25}^5(c_{24}^4 + c_{25}^5)) X_1 \\
& \qquad -  (c_{15}^3 - X_1(c_{25}^4 + c_{25}^5) + c_{15}^4 (c_{14}^4 + c_{15}^5) + c_{15}^5(c_{24}^4 + c_{25}^5)) X_2,
\end{align*}
and let $\bar{g}$ be the Riemannnian metric making $X_1, X_2, Z, Y_1, Y_2$ into an orthonormal basis. Write $\nabla^{\bar{g}}$ for the corresponding Levi-Civita connection.

Then $\bar{g}$, $V_{2}= \spn \{Z\}$ and $V_{3}= \spn \{ Y_1, Y_2\}$ are independent of choice of basis $X_1, X_2$. Furthermore, if we define a connection $\nabla$ making the decomposition $V = E \oplus V_{2} \oplus V_{3}$ parallel and further determined by the rules $\nabla Z = 0$ and
\begin{align*}
\langle \nabla_{X_i} X_j, X_k \rangle_{\bar{g}} & =\langle \nabla_{X_i} Y_j, Y_k \rangle_{\bar{g}} = \langle \nabla_{X_i}^{\bar{g}} X_j, X_k \rangle_{\bar{g}}, \\
\langle \nabla_{Z} X_j, X_k \rangle_{\bar{g}} & = \langle\nabla_{Z} Y_j, Y_k \rangle_{\bar{g}} = \langle [Z,X_j], X_k \rangle_{\bar{g}} + \frac{1}{2} (\mathcal{L}_Z \bar{g})(X_j, X_k), \\
\langle \nabla_{Y_i} X_j, X_k \rangle_{\bar{g}} & =\langle \nabla_{Y_i} Y_j, Y_k \rangle_{\bar{g}} = \langle [Y_i,X_j], X_k \rangle_{\bar{g}} + \frac{1}{2} (\mathcal{L}_{Y_i} \bar{g})(X_j, X_k).
\end{align*}
then $(M,E,g)$ is locally isometric to the Carnot group with growth vector $(2,3,5)$ if and only if the curvature vanishes and the only non-zero parts of the torsion $T$ are given by
$$T(X_2, X_1) = Z, \qquad T(Z, X_1) = Y_1, \qquad T(Z, X_2) = Y_2.$$
\end{theorem}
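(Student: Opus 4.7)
The plan is to reduce the statement to the partial flatness theorem by exhibiting $\bar{g}$, $V_{2}$, $V_{3}$ and $\nabla$ as data strongly compatible with a suitable grading $I$, and then checking that the Carnot-model end of the equivalence is realized precisely when the curvature and torsion take the prescribed form.

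First I would establish the invariance claims. Because $\mathfrak{g} = \free_{3}(\mathbb{R}^{2})$ has isometry algebra $\isom(\mathfrak{g}) = \so(2)$ (only rotations of $\mathfrak{g}_{1}$ extend to Carnot-algebra isometries, as in Example~\ref{ex:Free}), the only gauge freedom in choosing an adapted basis is $(X_{1},X_{2}) \mapsto (X_{1}',X_{2}') = R(X_{1},X_{2})$ for a smooth $R:M\to \SO(2)$. Under such a rotation, $X_{3}' = [X_{1}',X_{2}'] = X_{3}$ already, while $(X_{4}',X_{5}') = R(X_{4},X_{5}) + (\text{correction in } E \oplus \spn\{X_{3}\})$ arising from derivatives of $R$. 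I would track how the structure constants $c_{ij}^{k}$ transform and verify that the particular linear combinations
\[
c_{23}^{3}+c_{24}^{4}+c_{25}^{5}, \qquad c_{13}^{3}+c_{14}^{4}+c_{15}^{5},
\]
which are up to sign the components of a ``trace of adjoint'' on $\mathfrak{g}_{2}\oplus \mathfrak{g}_{3}$ relative to $X_{1},X_{2}$, transform exactly so as to cancel the spurious $E$-parts in $X_{3}$, $X_{4}$, $X_{5}$. This forces $Z$ to be invariant (hence $V_{2}=\spn\{Z\}$ is canonical) and $\spn\{Y_{1},Y_{2}\} = V_{3}$ to be an invariant complement with the pair $(Y_{1},Y_{2})$ rotating as $R$ rotates $(X_{1},X_{2})$. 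Invariance of $\bar{g}$ follows since by construction $(X_{1},X_{2},Z,Y_{1},Y_{2})$ is orthonormal and both $R$ and its induced action on $V_{3}$ are orthogonal.

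Next I would interpret this data as a grading $I:TM \to \symb$ in the sense of Section~\ref{sec:SRFrame}. Because $V_{2},V_{3}$ are canonical complements, the assignment
\[
X_{1}\mapsto A_{1},\ X_{2}\mapsto A_{2},\ Z\mapsto B,\ Y_{1}\mapsto C_{1},\ Y_{2}\mapsto C_{2}
\]
defines a Carnot isomorphism $I_{x}:T_{x}M \to \mathfrak{g}$ at each point, well-defined up to the $\SO(2)$-action, and the induced Riemannian extension $g_{I}$ is precisely $\bar{g}$. Then I would verify that the connection $\nabla$ specified in the statement is strongly compatible with $(E,g,I)$: the rules $\nabla Z = 0$ and $\langle \nabla X,X\rangle_{\bar{g}} = \langle\nabla Y,Y\rangle_{\bar{g}} = \langle\nabla^{\bar{g}} X,X\rangle_{\bar{g}}$ etc.\ make each of $E,V_{2},V_{3}$ parallel and metric, and the symmetric Lie-derivative correction $\tfrac{1}{2}\mathcal{L}_{Z}\bar{g}$, $\tfrac{1}{2}\mathcal{L}_{Y_{i}}\bar{g}$ is the standard choice that kills the symmetric part of the covariant derivative so that $\nabla \mathbb{T} = 0$; this last identity is the one to check most carefully because it is what distinguishes a strongly compatible connection from a merely compatible one.

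For the forward (``if'') direction I would then invoke the partial flatness theorem: with $\nabla$ strongly compatible, $R=0$, and $T=\mathbb{T}$ (which, by the definition of $\mathbb{T}$ via the symbol brackets $[A_{1},A_{2}]=B$, $[A_{j},B]=C_{j}$, is exactly $T(X_{2},X_{1})=Z$, $T(Z,X_{j})=Y_{j}$), the conclusion that $(M,E,g)$ is locally isometric to the Carnot group is immediate. For the converse, on the free Carnot group $G$ itself I would take $X_{1},X_{2}$ to be the left-invariant extensions of $A_{1},A_{2}$: then all correction coefficients $c_{ij}^{k}$ that enter the definitions of $Z, Y_{1}, Y_{2}$ vanish (only the structure-constant brackets $[X_{1},X_{2}]=X_{3}$, $[X_{1},X_{3}]=X_{4}$, $[X_{2},X_{3}]=X_{5}$ survive), so $Z=X_{3}$, $Y_{i}=X_{3+i}$, the metric $\bar{g}$ is the left-invariant extension, and the connection defined by the rules in the statement coincides with the left-invariant connection that makes $X_{1},\ldots,X_{5}$ parallel. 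A direct computation with these brackets gives $R=0$ and exactly the stated torsion. Since everything in the construction is gauge-invariant and diffeomorphism-natural, a local isometry $M \to G$ pulls this back to $(M,E,g)$, completing the equivalence.

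The main obstacle I anticipate is the first paragraph: showing by an explicit, if tedious, computation with the Jacobi identity that the non-obvious correction terms in $Y_{1}$ and $Y_{2}$ transform under a local $\SO(2)$ gauge change precisely so as to keep $\spn\{Y_{1},Y_{2}\}$ and the inner product $\bar{g}$ invariant. Once this canonicity is in hand, strong compatibility and the application of the partial flatness theorem are essentially formal.
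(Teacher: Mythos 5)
First, a point of reference: the paper does not actually prove Theorem~\ref{th:Flat235} in this text --- it states the result and defers all details to \cite{Gro20b}, so there is no in-paper proof to compare against. Your overall strategy is nonetheless the natural one and fits the paper's framework correctly: establish gauge-invariance of $Z$, $Y_1$, $Y_2$, $\bar g$ under change of orthonormal frame of $E$, package the data as a grading $I$ with $g_I=\bar g$ and a strongly compatible connection $\nabla$, apply the partial flatness theorem for the ``if'' direction, and get the ``only if'' direction by naturality together with the explicit computation on the model group (your verification that on $G$ all correction coefficients vanish, $Z=X_3$, $Y_i=X_{3+i}$, the frame is $\nabla$-parallel, $R=0$ and $T=-[\cdot,\cdot]$ gives the stated torsion, is correct). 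You are also right that one cannot simply cite the full flatness theorem for the converse, since the paper's own remark makes clear that this $\nabla$ is a \emph{simplified alternative} to the canonical Morimoto connection; the pull-back-by-naturality argument is the right substitute.

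The genuine gap is that the entire content of this theorem lies in the explicit correction coefficients, and your proposal defers exactly those verifications. Two things are promised but not performed: (1) that $Z$, $V_3=\spn\{Y_1,Y_2\}$ and $\bar g$ are invariant under a pointwise change of orthonormal frame of $E$, and (2) that the stated connection satisfies $\nabla\mathbb{T}=0$ (strong compatibility), without which the partial flatness theorem does not apply. Neither is routine: the coefficients $c_{ij}^k$ transform with derivative terms of the gauge function, and the specific combinations $c_{13}^3+c_{14}^4+c_{15}^5$ etc.\ must be shown to absorb them. Related to this, one intermediate claim you make is false as stated: under a frame rotation by a nonconstant $R$, $[X_1',X_2']=X_3+(\text{derivative-of-}R\text{ terms in }E)$, so $X_3'\neq X_3$; it is only $X_3$ modulo $E$, and precisely these spurious $E$-components are what the corrections in $Z$ must cancel. (Also, the gauge group here is $\Ort(2)$, not just $\SO(2)$; reflections must be checked too, though that is easy.) Without carrying out the transformation computation --- or, alternatively, deriving the coefficients from the normalization conditions \eqref{Rcond}--\eqref{Tcond} as is done for the Engel case in Section~\ref{sec:Engel} --- the proposal is an outline rather than a proof.
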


\bibliographystyle{alpha}
\bibliography{ErlendBib}

\Addresses

\end{document}